\newtheorem{Definition}{Definition}
\newtheorem{Theorem}{Theorem}
\newtheorem{Lemma}{Lemma}
\newtheorem{Remark}{Remark}
\newtheorem{Proposition}{Proposition}
\newcommand{\bbR}{\mathbb{R}}
\newcommand{\cA}{\mathcal{A}}
\newcommand{\cD}{\mathcal{D}}
\newcommand{\cY}{\mathcal{Y}}
\newcommand{\cX}{\mathcal{X}}
\newcommand{\cE}{\mathcal{E}}
\newcommand{\hcY}{\widehat{\mathcal{Y}}}
\newcommand{\hT}{{\widehat{T}}}
\newcommand{\hr}{{\widehat{r}}}
\newcommand{\hf}{{\widehat{f}}}
\newcommand{\hy}{{\widehat{y}}}
\newcommand{\indi}{{\mathds{1}}}
\newcommand{\bbP}{\mathbb{P}}
\newcommand{\bbE}{\mathbb{E}}
\newcommand{\iidsim}{\stackrel{\textnormal{iid}}{\sim}}
\begin{document}
	\title{The Limits of Assumption-free Tests for Algorithm Performance}
\author{Yuetian Luo\thanks{Data Science Institute, University of Chicago} \ and Rina Foygel Barber\thanks{Department of Statistics, University of Chicago}}
	\date{}
	\maketitle

	\bigskip

\begin{abstract}
Algorithm evaluation and comparison are fundamental questions in machine learning and statistics---how well does an algorithm perform
at a given modeling task, and which algorithm performs best?
Many methods have been developed to assess algorithm performance, often based around cross-validation type strategies, retraining
the algorithm of interest on different subsets of the data and assessing its performance on the held-out data points. 
Despite the broad use of such procedures, the theoretical properties of these methods are not yet fully understood. In this work,
we explore some fundamental limits for answering these questions with limited amounts of data. In particular, we make a distinction between
two questions: how good is an algorithm $\cA$ at the problem of learning from a training set of size $n$, 
versus, how good is a particular fitted model produced by running $\cA$ on a particular 
training data set of size $n$? Our main results prove that, for any test that treats the algorithm $\cA$ as a ``black box'' (i.e., we can only study the behavior of $\cA$ empirically),
there is a fundamental limit on our ability to carry out inference on the performance of $\cA$, unless the number of available data points $N$
is many times larger than the evaluation sample size $n$ of interest. On the other hand, evaluating the performance of a particular fitted model
can be easy as long as the loss function is bounded and a holdout data set is available---that is, as long as $N-n$ is not too small.
We also ask whether an assumption of algorithmic stability might be sufficient to circumvent this hardness result. Surprisingly, we find that the same hardness result still holds for the problem of evaluating the performance of $\cA$, 
aside from a high-stability regime where fitted models are essentially nonrandom.  Finally, we also establish similar hardness results
for the problem of comparing multiple algorithms.
\end{abstract}

\section{Introduction}\label{sec:intro}
Evaluating the performance of a regression algorithm, and comparing the performance of different algorithms, are fundamental questions in machine learning and statistics \citep{salzberg1997comparing,dietterich1998approximate,bouckaert2003choosing,demvsar2006statistical,garcia2008extension,hastie2009elements,raschka2018model}. 
When performing a data analysis, if the properties of the underlying data distribution are unknown, how can we determine which algorithms would model the data well?
For instance, for the task of predicting a real-valued response $Y$, which algorithm would have the smallest possible error?

\subsubsection{Algorithm evaluation.} In general, given access to a limited amount of data drawn from an unknown distribution $P$ on $\cX\times\cY$, 
and given a particular algorithm $\cA$ (chosen by the analyst), we would like to ask:
\begin{equation*}
	\texttt{EvaluateAlg}\textnormal{: how well does algorithm } \cA \textnormal{ perform on data drawn from } P ?
\end{equation*}

Writing $\hf$ to denote the fitted model produced by running algorithm $\cA$ on a training data set, 
a related question is to ask about the performance of the trained model $\hf$:
\begin{equation*}
	\texttt{EvaluateModel}\textnormal{: how well does the fitted model } \hf \textnormal{ perform on data drawn from } P ?
\end{equation*}

Are these simply two different phrasings of the same question---or should we make a distinction between the two?

In practice, these two questions are often treated as interchangeable: for instance, a paper might 
state that ``Logistic regression performs well on this problem'' (which suggests that the question \texttt{EvaluateAlg} is being
addressed), but then justify the statement by using a holdout set to evaluate the fitted model $\hf$ produced by running logistic regression on a particular data set
(which is actually addressing the question \texttt{EvaluateModel}). 
In this work, we argue that \emph{these two questions are not equivalent}, and indeed, 
answering \texttt{EvaluateAlg} may be impossible even in settings where \texttt{EvaluateModel} is trivial to answer.

\subsubsection{Algorithm comparison.} In practice, rather than asking about the risk of a single algorithm, we might instead
be interested in comparing multiple algorithms to see which is best---for instance,
which algorithm (from a set of candidates) minimizes a certain measure of risk. To address this
setting, in this paper we will also consider questions of algorithm comparison. 
As for the problem of evaluation, we might pose a question about the algorithms themselves:
\begin{equation*}
	\texttt{CompareAlg}\textnormal{: which algorithm, } \cA_0 \textnormal{ or } \cA_1\textnormal{, performs better on data drawn from }P?
\end{equation*}

Or, we might ask about the trained models returned by these algorithms---if $\hf_0$ and $\hf_1$ are the fitted models produced
by algorithms $\cA_0$ and $\cA_1$, respectively, we ask:
\begin{equation*}
	\texttt{CompareModel}\textnormal{: which fitted model, } \hf_0 \textnormal{ or } \hf_1\textnormal{, performs better on data drawn from }P?
\end{equation*}

Again, while these two questions are sometimes treated as interchangeable, they are distinct in a meaningful way.
Indeed, which question is the ``right'' one to ask will necessarily depend on the goal of the analysis. For example,
a data analyst who seeks to build an accurate model, in order to then deploy this model for future predictions or some other task,
may be more interested in determining whether $\hf_0$ or $\hf_1$ will be more accurate for this task.
On the other hand, a researcher developing methodology in statistics or machine learning
who proposes a new algorithm will instead be more interested in comparing $\cA_0$ (their algorithm) against $\cA_1$ (an existing algorithm).
If the researcher creates a demo on a real data set to compare their method to existing work,
implicitly they are claiming that their method $\cA_0$ will perform better than the existing method $\cA_1$
on other data sets as well---they are not interested only in the particular fitted models produced in one specific experiment.
For a deeper discussion on the subtle distinctions between various questions we may ask when comparing (or evaluating)
algorithms, we refer the reader to \citet[Section 1]{dietterich1998approximate}.

\subsection{Problem formulation: evaluating an algorithm}
In this section, we will formalize our questions for algorithm evaluation (we will return to the problem of algorithm comparison later on).

To formalize the two questions \texttt{EvaluateAlg} and \texttt{EvaluateModel} defined above, and understand the distinction between the two, we now introduce some notation.
Let $\ell$ be a loss function, with $\ell(f(x),y)\geq 0$ measuring the loss incurred by regression
function $f$ on data point $(x,y)$. 
Formally, we can write $\ell: \hcY\times\cY\rightarrow\bbR_+$, where we are evaluating models $f:\cX\rightarrow\hcY$. 
For example:
\begin{itemize}
\item When predicting a real-valued response, we might use squared loss or absolute-value loss,
\[\ell(f(x),y) = (y - f(x))^2 \textnormal{ \ or \ }\ell(f(x),y) = |y-f(x)|,\]
with $\cY = \hcY = \bbR$.
\item If instead $\cY = \{0,1\}$ (a binary response), if $f(x) \in\hcY = \{0,1\}$ predicts the binary label then we might use the zero--one loss, 
\[\ell(f(x),y) = \indi_{f(x)\neq y}.\]
If instead we have $f(x)\in\hcY = [0,1]$, predicting the probability of a positive label (i.e., $f(x)$ estimates $\bbP(Y=1\mid X=x)$), we might use 
a hinge loss or logistic loss.
\end{itemize}
Given a particular choice of the loss function $\ell$, we define the risk of a model $f$ as 
\[R_P(f) = \bbE_P[\ell(f(X),Y)],\]
which is computed with respect to a test point sampled as $(X,Y)\sim P$. 
Now consider a regression algorithm $\cA$, which inputs a data set $\{(X_i,Y_i)\}_{i \in [n]}$ where $[n] := \{1, \ldots, n\}$ and returns a fitted model $\hf_n$. 
The fitted model $\hf_n$ has risk $R_P(\hf_n)$---this captures the expected loss for the future performance of $\hf_n$, on test
data drawn from $P$. 
Note we can write
\[R_P(\hf_n)=\bbE_P\left[\ell(\hf_n(X),Y)\mid \hf_n\right],\]
the expected loss of the trained model when we average over  the draw of a new 
test point $(X,Y)\sim P$, but  condition on the fitted model $\hf_n$ (or equivalently, on
the draw of the training data, $\{(X_i,Y_i)\}_{i\in[n]}$).
This risk is random: it depends on the fitted model $\hf_n$, which will vary depending on the random draw of the training data.
In contrast, we could also measure the performance of $\cA$ by asking about its expected risk, over a random draw of the training data as well as the test point:
\[R_{P,n}(\cA) = \bbE_P\left[\ell(\hf_n(X),Y)\right],\]
 where now the expected value is taken with respect to a random draw $(X_1,Y_1),\dots,(X_n,Y_n),(X,Y)\iidsim  P$---that is, with respect to a random draw
of both the training and test data. 
In particular, the model risk $R_P(\hf_n)$ and algorithm risk $R_{P,n}(\cA)$ are related as
\[R_{P,n}(\cA) = \bbE_P\left[R_P(\hf_n)\right].\]

With this notation in place, we can now make our questions, \texttt{EvaluateAlg} and \\\texttt{EvaluateModel}, more precise.
\begin{equation*}
	\texttt{EvaluateAlg}\textnormal{: what is } R_{P,n}(\cA)? \quad \textnormal{ versus }\quad  \texttt{EvaluateModel}\textnormal{: what is } R_P(\hf_n)?
\end{equation*}

These two questions are different: a low value of $R_{P,n}(\cA)$
tells us that $\cA$ tends to perform well for distribution $P$, while a low value of $R_P(\hf_n)$ tells us that \emph{this particular fitted model} 
produced by $\cA$ performs well. In particular, the latter statement could result from the fact that either $\cA$ is good or the particular data set at hand favors $\cA$. Which question is the ``right'' question to ask, will depend on the context,
and we do not claim that one of these questions is more important than the other; instead, the aim of our paper is to understand
the distinction between these two questions, and to emphasize that the two questions should not be treated interchangeably.

\subsubsection{Defining a hypothesis test.}
There are many possible goals we might define for addressing \texttt{EvaluateAlg} or \texttt{EvaluateModel}:
for example, we might wish to construct a confidence interval for the risk $R_{P,n}(\cA)$ or $R_P(\hf_n)$,
or to perform a hypothesis test assessing whether $R_{P,n}(\cA)$ or $R_P(\hf_n)$ lies above or below some prespecified threshold $\tau$,

\begin{equation}\label{eqn:hypothesis_test}\texttt{EvaluateAlg}: H_0:\, R_{P,n}(\cA) \geq \tau \textnormal{ \ versus \ } H_1: \,R_{P,n}(\cA) <\tau,\end{equation}
\begin{equation}\label{eqn:hypothesis_test2} 
	 \texttt{EvaluateModel}: H_0:\, R_P(\hf_n) \geq \tau \textnormal{ \ versus \ } H_1: \,R_P(\hf_n) <\tau.
\end{equation}
 We note that it is not hard to see that \texttt{EvaluateModel} is an easy problem as long as the loss function is bounded and we have a large holdout data. This is because after training model $\hf_n$, we can use the holdout data $\cD_{\textnormal{hold}}$ to estimate $R_P(\hf_n)$ via computing $ \frac{1}{|\cD_{\textnormal{hold}}|}\sum_{(X_i, Y_i) \in \cD_{\textnormal{hold}}} \ell(\hf_n(X_i), Y_i)$; if $|\cD_{\textnormal{hold}}|$ is large then this estimate is likely to be accurate. However, even in this bounded setting where \texttt{EvaluateModel} is relatively easy, we will see below that \texttt{EvaluateAlg} is fundamentally hard: our results will establish limitations on the ability of any test to answer the question posed by \texttt{EvaluateAlg}, even in the setting of a bounded loss. To establish such a hardness result, it suffices to consider the problem of performing
the hypothesis test~\eqref{eqn:hypothesis_test}. This is because, if we have instead constructed a confidence interval $\widehat{C}$
with a guarantee of containing the risk $R_{P,n}(\cA)$ with probability $\geq 1-\alpha$, we can test~\eqref{eqn:hypothesis_test}
at Type I error level $\alpha$ by simply checking whether $\widehat{C}\subseteq [0,\tau)$.

\subsection{Overview of contributions}
In this paper, for the problem of algorithm evaluation,
 we will establish fundamental limits on the ability of any \emph{black-box test} (that is,
any test that studies the behavior of $\cA$ empirically rather than theoretically)
to perform inference on the hypothesis test~\eqref{eqn:hypothesis_test} even when model evaluation is easy to solve. We will also see that this hardness result can be partially alleviated
by adding an assumption of algorithmic stability, but only in the high-stability regime.
We will also establish analogous results for the problem of algorithm comparison.

\textbf{Organization.}
In Section~\ref{sec:background}, we will present some background 
on algorithm testing and on algorithmic stability, in addition to giving an overview of some related work in the literature.
Section~\ref{sec:results-evaluation} presents our main results establishing the hardness
of testing \texttt{EvaluateAlg}, and establishes that an extremely simple and naive test 
already achieves (nearly) the maximum possible power. Results under an algorithmic stability
assumption are presented in Section~\ref{sec:results-evaluation-stability}.
We then turn to the problem of algorithm comparison, which is addressed in Section~\ref{sec:results-comparison}.
Discussion and conclusions are provided in Section~\ref{sec:discussion}. All proofs are deferred to the Supplementary Material \citep{luo2025limit}.

\section{Background} \label{sec:background}

In this section, we formalize some definitions and give additional background for studying the properties of algorithms.
Formally, for data lying in some space $\cX\times\cY$, we define an algorithm $\cA$ as a map
\[\cA: \  \cup_{n\geq 0} (\cX\times\cY)^n \rightarrow \big\{\textnormal{measurable functions $\cX\rightarrow\hcY$}\big\}.\]
That is, $\cA$ takes as input $\{(X_i,Y_i)\}_{i \in [n]}$, a data set of any size, and returns a fitted model $\hf_n: \cX\rightarrow\hcY$,
which maps a feature value $x$ to a fitted value, or prediction, $\hy = \hf_n(x)$.
For instance, for a real-valued response $Y\in\cY = \bbR$, we might take $\cA$ to be the least squares regression algorithm,
leading to linear fitted models $\hf_n$. 
In many settings, commonly used algorithms $\cA$ will include some form of randomization---for instance, stochastic gradient
descent type methods. To accommodate this setting, we will expand the definition of an algorithm to take an additional argument,
\begin{equation}\label{eqn:define_alg}\cA:  \ \cup_{n\geq 0} (\cX\times\cY)^n \times [0,1] \rightarrow \big\{\textnormal{measurable functions $\cX\rightarrow\hcY$}\big\}.\end{equation}
The fitted model $\hf_n = \cA\big(\{(X_i,Y_i)\}_{i \in [n]} ; \xi\big)$ is now obtained by running algorithm $\cA$
on data set $\{(X_i,Y_i)\}_{i \in [n]}$, with randomization provided by the argument $\xi\in[0,1]$, 
which acts as a random seed.\footnote{Throughout this paper, we implicitly assume measurability for all
functions we define, as appropriate. In the case of the algorithm $\cA$, this takes the following form: 
we assume the measurability of the map
$\left( (x_1, y_1), \ldots, (x_n, y_n), \xi, x \right) \mapsto [\cA(\{(x_i,y_i)\}_{i\in[n]};\xi)](x)$.} Of course, the general definition~\eqref{eqn:define_alg}
of a randomized algorithm also accommodates the deterministic (i.e., non-random) setting---we can simply choose an algorithm $\cA$
whose output depends only on the data set $\{(X_i,Y_i)\}_{i\in [n]}$, and ignores the second argument $\xi$. 

With this notation in place, we are now ready to consider the question \texttt{EvaluateAlg}. To formalize the definition of risk $R_{P,n}(\cA)$ introduced in Section~\ref{sec:intro}
 in a way that  accommodates the case of randomized algorithms as in~\eqref{eqn:define_alg}, we will define the risk $R_{P,n}(\cA)$ as follows:
\begin{equation}\label{eqn:define_risk}
R_{P,n}(\cA) = \bbE\left[\ell\left( \hf_n(X_{n+1}) , Y_{n+1}\right)\right]\textnormal{ for }\hf_n = \cA\big(\{(X_i,Y_i)\}_{i \in [n]} ; \xi\big),\end{equation}
where the expected value is taken with respect to the distribution of data points $(X_i,Y_i)\iidsim  P$ for $i = 1,\ldots, n+1$
and an independent random seed $\xi\sim\textnormal{Unif}[0,1]$.

\subsection{Black-box tests}

To understand the difficulty of the question \texttt{EvaluateAlg}, we need to 
formalize the limitations on what information is available to us, as the analyst, for performing inference on an algorithm's risk.
For example, if we know the distribution $P$ of the data, and know that our algorithm $\cA$ is given by $k$-nearest-neighbors (kNN) regression,
then we can derive a closed-form expression for the risk $R_{P,n}(\cA)$ theoretically.
Clearly, this is not the setting of interest, since in practice we cannot rely on assumptions about the data
distribution---but in fact, even if $P$ is unknown, estimating $R_{P,n}(\cA)$ for a simple algorithm such as kNN
is relatively straightforward. In modern settings, however, a state-of-the-art algorithm $\cA$ is typically far more complex,
and may be too complicated to analyze theoretically. For this reason, in this work we consider \emph{black-box} procedures \citep{myers2011art},
which analyze the algorithm $\cA$ via empirical evaluations, rather than theoretical calculations.
The intuition is this: if $\cA$ is implemented as a function in some software package, our procedure
is allowed
to call the function (perhaps as many times as we like, or perhaps with a computational budget), but is not able 
to examine the implementation of $\cA$ (i.e., we cannot read the code that defines the function).

We formalize this black-box setting in the following definition, which is based on a similar construction given in \cite{kim2021black}.
\begin{Definition}[Black-box test for algorithm evaluation] \label{def:black-box-test} 
Let $\mathfrak{D} = \cup_{m \geq 0} (\cX \times \cY)^m$ denotes the space of data sets of any size. Consider any function $\hT$ that takes as input an algorithm $\cA$ and a dataset $\cD \in \mathfrak{D}$,
and returns a (possibly randomized) output $\hT(\cA,\cD)\in\{0,1\}$. 
We say that $\hT$ is a black-box test if it can be defined as follows:\footnote{For this definition, we assume measurability of
the map $(\cD,\zeta^{(1)},\zeta^{(2)},\dots,\zeta)\mapsto \hT(\cA,\cD)$, which inputs the initial data set $\cD$ and the
random seeds used throughout the construction of the test, and returns the output of the test $\hT(\cA,\cD)\in\{0,1\}$.}  
 for some functions $g^{(1)},g^{(2)},\dots,g$, and for random seeds, $\zeta^{(1)},\zeta^{(2)},\dots,\zeta\iidsim  \textnormal{Unif}[0,1]$,
 		 \begin{enumerate}
	 	\item At stage $r =1$, generate a new dataset and a new random seed,
		\[(\cD^{(1)},\xi^{(1)} ) = g^{(1)} (\cD, \zeta^{(1)} ),\]
		and compute the fitted model $\hf^{(1)} = \cA( \cD^{(1)}; \xi^{(1)} )$.
	 	\item For each stage $r = 2,3,\ldots$, generate a new dataset and a new random seed,
		\[(\cD^{(r)},\xi^{(r)} ) = g^{(r)} \big(\cD, (\cD^{(s)})_{1\leq s<r}, (\hf^{(s)})_{1\leq s<r}, (\zeta^{(s)})_{1\leq s\leq r} , (\xi^{(s)})_{1\leq s<r}\big),\]
		and compute the fitted model
		$\hf^{(r)} = \cA(\cD^{(r)};\xi^{(r)})$.
		\item Repeat until some stopping criterion is reached; let $\hr$ denote the (data-dependent) total number of rounds.
	 	\item Finally, return
		\[\hT(\cA,\cD) = g\big(\cD,(\cD^{(r)})_{1\leq r\leq \hr},(\hf^{(r)})_{1\leq r\leq \hr},(\zeta^{(r)})_{1\leq r\leq \hr},(\xi^{(r)})_{1\leq r\leq \hr},\zeta\big).\]
		\end{enumerate}
\end{Definition}

In this definition, at each stage $r$ we are allowed to deploy the training algorithm $\cA$ on a data set $\cD^{(r)}$
that we have designed adaptively---e.g., by subsampling from the original data set $\cD$, or by generating simulated data given the past information.
The random seeds $\zeta^{(1)}, \zeta^{(2)}, \ldots, \zeta $ allow for randomization at each step of the procedure, if desired---for instance,
to allow for randomly subsampling from the original data set $\cD$. (Of course,
we might choose to run a deterministic procedure, as a special case.)

\subsubsection{An example of a black-box test.}
As an example of the type of test that would satisfy Definition~\ref{def:black-box-test}, we can consider a cross-validation (CV) \citep{stone1974cross,geisser1975predictive} based estimate of risk. Assuming we have $N$ data points $\{(X_i,Y_i)\}_{i\in [N]}$, $K$ divides $N$ and $n = N(1-1/K)$ for some integer $K\geq 1$,
we can construct an estimator as follows:
\begin{itemize}
\item Construct a partition $[N] = S_1\cup\dots\cup S_K$ with $|S_k| = N/K$ for each $k$.
\item For each $k=1,\dots,K$, define
\[\hf^{(k)} = \cA(\cD_{\backslash S_k}; \xi_k)\]
where $\cD_{\backslash S_k} = \{(X_i,Y_i)\}_{i\in [N]\backslash S_k}$ is a data set of size $n$,
 and where $\xi_k\sim\textnormal{Unif}[0,1]$. Then estimate
the risk of each fitted model,
\[\widehat{R}(\hf^{(k)}) = \frac{1}{|S_k|}\sum_{i\in S_k}\ell(\hf^{(k)}(X_i),Y_i).\]
\item Finally, average over all fitted models to define the CV estimate of risk:
\begin{equation}\label{eqn:Rhat_CV}\widehat{R}^{\textnormal{CV}}_n(\cA) = \frac{1}{K}\sum_{k=1}^K \widehat{R}(\hf^{(k)}),\end{equation}
and return a 1 or a 0 depending on whether $\widehat{R}^{\textnormal{CV}}_n(\cA)< \tau$ or $\widehat{R}^{\textnormal{CV}}_n(\cA)\geq\tau$.
\end{itemize}

\subsubsection{An example that is not black-box.} Tests that require
  infinitely many executions of a black-box algorithm are not admissible under Definition~\ref{def:black-box-test}. For example,  if $\cX = \bbR^d$, then given a data set $\cD_N$ and an additional
test point $(x,y)$, we would not be able
to answer the question, ``what is the maximum loss that could be incurred by adding a single data point to the data set''---that is,
a black-box test would be unable to calculate
\[\sup_{(x',y')\in\cX\times\cY} \ell(\hf_{(x',y')}(x),y) \textnormal{ where }\hf_{(x',y')} = \cA\big(\cD_N\cup\{(x',y')\};\xi\big).\]
This is because 
evaluating the supremum requires training the model $\hf_{(x',y')}$ for each $(x',y')\in\cX\times\cY$, which cannot be accomplished
with any finite collection of calls to $\cA$.
(On the other hand, computing the maximum loss over any test point,
\[\sup_{(x,y)\in\cX\times\cY} \ell(\hf(x),y) \textnormal{ where }\hf = \cA\big(\cD_N\big),\]
is possible, since this can be done with only a single call to $\cA$; even though this supremum implicitly requires
 infinitely many evaluations of $\hf$, this is permitted since the functions $g^{(1)},g^{(2)},\dots,g$ in 
Definition~\ref{def:black-box-test} are allowed to depend arbitrarily on the fitted models $\hf^{(1)},\hf^{(2)},\dots$. We make this distinction because, in practice, it is often the case that $\cA$ is complex but a
 typical  fitted model $\hf$
 is simple---e.g., a sophisticated variable selection procedure that results in a linear regression model.)

\subsection{Additional related literature}
Many statistical tests and heuristic methods have been proposed for algorithm evaluation and comparison. Given a large dataset, 
 using a holdout set, also known as sample splitting, is often recommended to evaluate or compare algorithms in practice \citep{hinton1995assessing,neal1998assessing} (note that using a holdout set to evaluate a fitted model $\hf_n$ is directly
 addressing the question \texttt{EvaluateModel}, rather than \texttt{EvaluateAlg}).
 With a limited sample size, resampling methods are more popular, including methods
such as cross-validation, and its variants such as $5 \times 2$-fold cross-validation (CV) \citep{dietterich1998approximate}, (corrected) repeated train-validation \citep{nadeau1999inference}, and nested cross-validation \citep{varma2006bias,raschka2018model}. 
As discussed earlier, many works in the literature have emphasized the qualitative difference between \texttt{EvaluateAlg} and \texttt{EvaluateModel} (e.g., \cite{dietterich1998approximate,hastie2009elements}). Which of these questions is more useful to answer, may depend on the particular
application; for example, \cite{trippe2023confidently} argued that \texttt{EvaluateModel} may be a more meaningful question in certain settings. 
However, in settings where \texttt{EvaluateAlg} is the question of interest, how to perform inference on this target is less well understood.
\cite{nadeau1999inference} identified that ignoring the variability due to the randomness of the training set can severely undermine the validity of inference in \texttt{EvaluateAlg}. Classical results on CV have suggested that CV can consistently determine which of two algorithms
 is better, but these results are generally asymptotic and require some restrictive conditions on the data distribution \citep{shao1993linear,yang2007consistency}.  See also \cite{hastie2009elements,arlot2010survey} for surveys on model selection via CV. Based on the asymptotic normality of CV error, the recent work \cite{bayle2020cross} discussed how to compare the prediction error of averaged models of two algorithms (in the same spirit as \texttt{EvaluateModel}) in the CV procedure. Similar results have been also extended to compare the algorithm's risk \citep{austern2020asymptotics,li2023asymptotics}. Understanding the estimand of CV and effectiveness of CV in evaluating a fitted model are also active areas of research \citep{bates2021cross,iyengar2025cross}. 
There are also many other methods available for estimating the prediction error of an algorithm other than CV; see, e.g., \cite{rosset2019fixed} and references therein
for work in a nonparametric regression setting.

Our work also contributes to an important line of work on impossibility or hardness results in statistics, especially in the distribution-free inference setting \citep{bahadur1956nonexistence,barber2021limits,shah2020hardness,barber2020distribution,medarametla2021distribution,kim2022local}. The closest work to ours is \cite{kim2021black}, 
which establishes a hardness result for the problem of constructing an assumption-free test of algorithmic stability. 

Finally, the literature on algorithmic stability is also very relevant to this work---we will give background
on this area of the literature in Section~\ref{sec:background_stability}, below.

\section{Theoretical results for algorithm evaluation} \label{sec:results-evaluation}
In this section, we present our first main result, establishing the hardness of answering the question \texttt{EvaluateAlg}---or more concretely,
of performing the hypothesis test~\eqref{eqn:hypothesis_test}---via a black-box procedure.
With the aim of avoiding placing any assumptions on the algorithm $\cA$ and on the distribution $P$,
we would like to provide an answer to \texttt{EvaluateAlg} that is uniformly valid over \emph{all} possible $\cA$ and $P$.
Specifically, fixing an error tolerance level $\alpha$, an evaluation sample size $n$, an available dataset $\cD_N = \{(X_i,Y_i)\}_{i\in [N]}\iidsim P$ of size $N$, and a risk 
threshold $\tau > 0$ for our hypothesis test, we will consider tests $\hT = \hT(\cA,\cD_N)$ 
that satisfy 
\begin{equation}\label{eqn:validity}\bbP_P( \hT(\cA,\cD_N) = 1)\leq \alpha \textnormal{ for any $\cA,P$ such that $R_{P,n}(\cA)\geq \tau$},\end{equation}
where the probability is taken with respect to the data set $\cD_N$ (as well as any randomization in the test $\hT$).
From this point on, we treat the allowed error level $\alpha\in(0,1)$, the risk threshold $\tau > 0$, and sample sizes $n,N\geq 1$ as fixed; 
our results will give finite-sample (non-asymptotic) limits on power for this testing problem. We will see that power for any valid black-box
test for solving \eqref{eqn:hypothesis_test} is limited unless the available sample size $N$ is much larger than $n$.

\subsection{Limits of black-box tests for bounded loss} \label{sec:bounded-loss}
In this section, we
establish limits on power for any universally valid black-box test on the testing problem \eqref{eqn:hypothesis_test}, in a bounded loss setting.

We first define some additional notation. For a distribution $P$ on $\cX\times\cY$, let
\[R_P^{\max} = \sup_f R_P(f) ,\]
the largest possible risk of any function $f$ for data drawn from $P$. 
We are primarily interested in testing the hypothesis~\eqref{eqn:hypothesis_test} for a threshold $\tau\leq R_P^{\max}$
(because if we were instead considering $\tau>R_P^{\max}$, this would be a fundamentally simpler setting---\emph{any} algorithm
will have risk $<\tau$ over distribution $P$, so the hypothesis test would depend only on $P$,  not on the algorithm $\cA$).

We are now ready to state the theorem.

\begin{Theorem} \label{thm:limits_evaluate} 
Assume that either $|\cX| = \infty$ or $|\cY| = \infty$,
and that the loss $\ell$ takes values in $[0,B]$. 
Let $\hT$ be a black-box test (as in Definition \ref{def:black-box-test}),
and assume that $\hT$ satisfies the assumption-free validity condition~\eqref{eqn:validity}.
Let $\tilde\tau = \tau(1 + \frac{\alpha^{-1}-1}{N} ) $. 

Then the power of $\hT$ to detect low risk is bounded as follows:
for any $\cA$ and any $P$ with $R_{P,n}(\cA)< \tau$,
\[
	\bbP_P( \hT(\cA, \cD_N)  = 1) \leq \left[\alpha\left(1 + \frac{ \tilde\tau - R_{P,n}(\cA) }{R_{P}^{\max} - \tilde\tau } \right)^{N/n} \right] \wedge 1,
\]
if we assume $\tilde\tau < R_P^{\max}$ so that the denominator is positive.
\end{Theorem}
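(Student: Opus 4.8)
The plan is to prove the bound by a two-point (Le Cam–type) comparison adapted to the black-box model, in the spirit of the impossibility argument of \cite{kim2021black}. Fix $\cA$ and $P$ with $\rho := R_{P,n}(\cA) < \tau$, and let $f^{\max}$ be a near-worst-case model, $R_P(f^{\max})$ arbitrarily close to $R_P^{\max}$ (a limiting argument handles a non-attained supremum). I will build a ``corrupted'' companion algorithm $\cA'$ with two properties: (i) $R_{P,n}(\cA')\ge\tilde\tau\ge\tau$, so that the assumption-free validity condition~\eqref{eqn:validity} forces $\bbP_P(\hT(\cA',\cD_N)=1)\le\alpha$; and (ii) a black-box test cannot distinguish $\cA'$ from $\cA$ except on a rare ``anomaly'' event whose probability I can compute exactly. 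Everything is governed by the corruption level
\[
u^\ast \ :=\ \frac{\tilde\tau-\rho}{R_P^{\max}-\rho}\ \in\ (0,1),
\]
the probability with which $\cA'$ must overwrite the output of $\cA$ by $f^{\max}$ to raise the algorithm risk from $\rho$ up to $\tilde\tau$; note $\tfrac{1}{1-u^\ast} = 1 + \tfrac{\tilde\tau-\rho}{R_P^{\max}-\tilde\tau}$, which is exactly the base of the exponent in the theorem.

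Concretely, $\cA'$ behaves exactly like $\cA$ except that it returns $f^{\max}$ whenever its input dataset contains a data point possessing a certain hidden ``trigger'' property $Q$, where --- using the hypothesis $|\cX|=\infty$ or $|\cY|=\infty$ --- the property is calibrated so that $\bbP_{(X,Y)\sim P}(Q)=q$ with $(1-q)^n = 1-u^\ast$. Then a genuine i.i.d.\ training sample of size $n$ triggers with probability $1-(1-q)^n = u^\ast$, so $R_{P,n}(\cA')\ge\tilde\tau$. Couple the run of $\hT$ on $\cA$ with its run on $\cA'$ using the same data $\cD_N$, the same internal randomness of $\hT$, and the same algorithmic randomness, so that $\cA'$ and $\cA$ return identical models on every query that does not trigger. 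Because every dataset that $\hT$ feeds to the algorithm is assembled from the $N$ genuine points of $\cD_N$ plus randomness independent of $P$, the construction ensures the test's queries can contain a $Q$-point only if $\cD_N$ itself does; hence on the event that no point of $\cD_N$ has property $Q$, no query ever triggers, and the two runs produce an identical transcript of fitted models, hence identical outputs. The probability of this event is $(1-q)^N = (1-u^\ast)^{N/n}$. Writing $\beta_0$ for the (coupling-common) probability that $\hT$ rejects given no point of $\cD_N$ has property $Q$, we obtain $\bbP_P(\hT(\cA,\cD_N)=1)=\beta_0$ while $\bbP_P(\hT(\cA',\cD_N)=1)\ge(1-u^\ast)^{N/n}\beta_0$; combining with $\bbP_P(\hT(\cA',\cD_N)=1)\le\alpha$ gives $\beta_0\le\alpha(1-u^\ast)^{-N/n}=\alpha\bigl(1+\tfrac{\tilde\tau-\rho}{R_P^{\max}-\tilde\tau}\bigr)^{N/n}$, and the $\wedge 1$ is automatic. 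The moral is that $N$ genuine points buy only about $N/n$ independent ``looks'' at the performance of a size-$n$-trained model, and unlimited adaptive black-box querying buys nothing more.

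The step I expect to be the main obstacle is making the trigger property $Q$ simultaneously (a) occur with \emph{exactly} the prescribed probability $q$ under the given $P$ --- for a $P$ with a complicated atomic structure such a $Q$ need not exist, which is handled by randomizing $Q$ and invoking a Markov-type rounding step, and this rounding is precisely why the theorem involves $\tilde\tau = \tau(1+\tfrac{\alpha^{-1}-1}{N})$ rather than $\tau$; and (b) genuinely \emph{hidden} from the full power of Definition~\ref{def:black-box-test}: an adaptive test that designs its own datasets, injects its own data points, inspects the returned fitted models, and uses a data-dependent stopping rule must be unable to manufacture a $Q$-point on its own or to identify $Q$ accurately enough to estimate $u^\ast$ more sharply than the ``no anomaly'' event allows. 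The infinite-cardinality hypothesis is exactly what makes (b) possible (when $P$ has an atomless component one can hide $Q$ cleanly inside it). A secondary technical point is to ensure the non-triggering part of $\cA'$ contributes at least $\rho$ to $R_{P,n}(\cA')$, i.e., that planting $Q$ does not perversely lower $\cA$'s conditional performance on non-triggering samples; this dictates how $Q$ is chosen relative to $\cA$.
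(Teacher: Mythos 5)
Your construction keeps the data distribution $P$ fixed and raises the risk by corrupting only the algorithm, with a trigger property $Q$ of probability $q\asymp u^*/n$ under $P$. This creates a gap that the argument as written does not close: the event on which your coupling makes the two runs identical (``no $Q$-point appears'') has probability about $(1-q)^N=(1-u^*)^{N/n}$, which is \emph{not} close to one, and your chain of inequalities silently replaces the unconditional power $\bbP_P(\hT(\cA,\cD_N)=1)$ by the conditional rejection probability $\beta_0$ given that $\cD_N$ contains no $Q$-point. These are not equal: conditioning on ``no $Q$-point'' changes the law of $\cD_N$, and on the complementary event (probability $1-(1-q)^N$, which is large when $N\gtrsim n$ and $u^*$ is not small) nothing is controlled. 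What the route actually yields is $\bbP_P(\hT(\cA,\cD_N)=1)\le (1-q)^N\beta_0+\bigl(1-(1-q)^N\bigr)\le \alpha+1-(1-u^*)^{N/n}$, an additive and much weaker bound than the multiplicative one claimed in the theorem. There is also a secondary unproved step: Definition~\ref{def:black-box-test} lets the test feed $\cA$ arbitrary synthetic data, so ``queries contain a $Q$-point only if $\cD_N$ does'' is not automatic; the paper's Lemma~\ref{lem:find_rare_point} delivers this kind of invisibility only for a single point whose probability of ever appearing is arbitrarily small, which is incompatible with your need for $P(Q)=q$ to be non-negligible.

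The paper's proof resolves exactly this tension by perturbing \emph{both} the algorithm and the distribution: using Lemma~\ref{lem:find_rare_point} it picks a single point $(x_*,y_*)$ that is essentially never seen under $P$ (probability $\le\epsilon$ of appearing anywhere in the run), defines $\cA'$ to output a near-worst-case $f_*$ only when the training set contains that point, and defines $P'=(1-c)P_*+c\,\delta_{(x_*,y_*)}$ so that the trigger is \emph{common under $P'$} while invisible under $P$. The risk inflation $R_{P',n}(\cA')\ge\tau$ is then verified under $P'$ (this is where the null pair $(\cA',P')$ lives and where validity~\eqref{eqn:validity} is invoked), indistinguishability of $\hT(\cA,\cD_N)$ and $\hT(\cA',\cD_N)$ holds up to $\epsilon$ under $P$, and the factor $(1-c)^{-N}$ arises from the coupling/likelihood ratio between $P^N$ and $(P')^N$ rather than from conditioning on a rare event; optimizing over $c$ subject to $(1-c)\bigl[(1-c)^n R_{P,n}(\cA)+(1-(1-c)^n)R_P^{\max}\bigr]>\tau$ produces the exponent $N/n$ and the appearance of $\tilde\tau$ (via $1-c\ge\alpha^{1/N}$), not a rounding of $q$ as you suggest. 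To repair your argument you would need to move the trigger's mass into a modified distribution, at which point you recover the paper's two-point comparison in the joint $(\cA,P)$ space.
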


 In particular, as expected, if the risk $R_{P,n}(\cA)$ is only slightly lower than $\tau$, then the power is 
only slightly higher than $\alpha$ (i.e., no better than random). But, even if the risk is low (e.g., $R_{P,n}(\cA)=0$ at the extreme),
if the available data is limited (e.g., $N$ is close to the evaluation sample size $n$), 
then we see that the power can only be a constant multiple of $\alpha$. If the allowed error level $\alpha$ is small,
we therefore cannot have a powerful test for \emph{any} alternative. On the other hand, under the same setting as in Theorem \ref{thm:limits_evaluate}, we should clarify that \texttt{EvaluateModel} is relatively easy, as a test leveraging the holdout data can solve the problem. For instance if $N = n+m$, then we have a holdout set of $m$ many data points with
 which we can estimate $R_P(\hf_n)$ up to error $\mathcal{O}_P(\frac{1}{\sqrt{m}})$ by concentration---and therefore, testing whether $R_P(\hf_n) < \tau$ is straightforward in the black-box setting. We would like to emphasize that Theorem \ref{thm:limits_evaluate} is a hardness result and it is not obvious a priori given many successful examples in distribution-free inference \citep{vovk2005algorithmic,angelopoulos2024theoretical}. This suggests that it may be interesting to consider different definitions of assumption-free validity, since the definition of validity given in \eqref{eqn:validity} leads to the above hardness results and consequently appears to be too strong for meaningful inference to be possible.

In addition, the above result has a few other important implications. First, it complements the results in \cite{dietterich1998approximate,nadeau1999inference} and provides more quantitative results on the difference between \texttt{EvaluateAlg} and \texttt{EvaluateModel}. Second, it reveals that in the assumption-free setting with a limited number of samples, any claims about the risk of a procedure should be interpreted as a claim about the risk of the fitted model rather than the risk of the algorithm. Third, it reveals a trade-off in our ability to perform inference on \texttt{EvaluateAlg}. This is because given a data set size $N$, if we wish to have high power for identifying whether an algorithm $\cA$ has low risk $R_{P,n}(\cA)$,
we need to choose a small sample size, $n\ll N$---but
for small $n$, the algorithm $\cA$ will likely return fitted models that are less accurate, i.e.,
the risk $R_{P,n}(\cA)$ itself will likely be higher. (This type of tradeoff has been described by 
\cite{rinaldo2019bootstrapping}  as the \emph{inference--prediction tradeoff}, in a related setting.)

\begin{Remark} \label{rem:valueB}
	We note that the bound for the loss function $B$ does not explicitly appear in Theorem \ref{thm:limits_evaluate}, but the value of $\tau, R_{P,n}(\cA)$ and $R_{P}^{\max}$ typically scales with $B$. For example, in common settings, if we multiply the loss function by $10$, then $\tau, R_{P,n}(\cA)$ and $R_{P}^{\max}$ will also increase by a multiplicative factor $10$. But in that case, the ratio $\frac{ \tilde\tau - R_{P,n}(\cA) }{R_{P}^{\max} - \tilde\tau }$ in Theorem \ref{thm:limits_evaluate} would not change. In that sense, the power upper bound we get is scale-invariant. 
\end{Remark}

\begin{Remark}[Validity for randomized versus deterministic algorithms]
The results of Theorem~\ref{thm:limits_evaluate} bound the power of any black-box test $\hT$
when applied to any algorithm $\cA$, including algorithms that may be randomized (i.e., $\cA(\cD;\xi)$ has nontrivial
dependence on the random seed $\xi$). This is a stronger conclusion than if we only bounded power
for deterministic algorithms, but the assumption is stronger as well: the validity condition~\eqref{eqn:validity}
requires $\hT$ to have a bounded error rate over all $\cA$ (including algorithms that are randomized).
We remark that the deterministic version of the result also holds: that is,
if we require validity~\eqref{eqn:validity} only with respect to deterministic $\cA$ (algorithms for which $\cA(\cD;\xi)$
does not have any dependence on $\xi$), then the bound on power given in Theorem~\ref{thm:limits_evaluate}
also holds for all deterministic $\cA$ (in fact, the same construction in the proof can be used for both versions of the result).
\end{Remark}

\begin{Remark}[The infinite cardinality assumption] \label{rem:cardinality-assumption}
In this theorem (and in all our results below), we assume that either $|\cX|=\infty$
or $|\cY|=\infty$, i.e., the data points lie in a space $\cX\times\cY$ that has infinite cardinality.
The reason for this assumption is that, for finite $\cX\times\cY$, since the definition of a black-box test does not place any computational
constraints on the number of calls to $\cA$, we can perform an exhaustive search
to fully characterize the behavior of $\cA$. 
Therefore, the problem of testing the risk of $\cA$ \emph{with no computational constraints} is meaningful only for an infinite-cardinality
space. On the other hand, in practice computational limits are a natural component of any testing problem, and therefore
inference on risk for a large but finite space $\cX$ likely will have power limited by some function of the computational budget.
We leave this more complex question for future work. (See also Appendix~B.1 for further discussion.)
\end{Remark}

Careful readers may wonder does the result in Theorem \ref{thm:limits_evaluate} hold for testing other notions of algorithm risk. One possible choice is the following PAC-type notion: given any $\delta \in (0,1), \epsilon > 0$, consider
\begin{equation} \label{eq:tail-risk-test}
	H_0: \bbP( R_P(\cA(\cD_n; \xi)) \geq \epsilon ) \geq \delta \quad \textnormal{versus} \quad H_1:  \bbP( R_P(\cA(\cD_n; \xi)) \geq \epsilon ) < \delta. 
\end{equation} Then we can ask what is the best power we can hope for if the test is required to satisfy the following notion of assumption-free validity:
\begin{equation} \label{ineq:new-validity}
	\bbP_P( \hT(\cA,\cD_N) = 1)\leq \alpha \textnormal{ for any $\cA,P$ such that $\bbP( R_P(\cA(\cD_n; \xi)) \geq \epsilon ) \geq \delta$}.
\end{equation}
It turns out that the hardness result established in Theorem \ref{thm:limits_evaluate} also transfers to the new testing problem \eqref{eq:tail-risk-test}. 
\begin{Proposition}\label{thm:limits_evaluate-new-validity} 
Assume that either $|\cX| = \infty$ or $|\cY| = \infty$,
and that the loss $\ell$ takes values in $[0,B]$. 
Let $\hT$ be a black-box test (as in Definition \ref{def:black-box-test}),
and assume that $\hT$ satisfies the assumption-free validity condition~\eqref{ineq:new-validity}.
Let $\tau = \delta B + \epsilon$ and $\tilde\tau = \tau(1 + \frac{\alpha^{-1}-1}{N} ) $. 

Then the power of $\hT$ to detect low risk in the sense of \eqref{eq:tail-risk-test} is bounded as follows:
for any $\cA$ and any $P$ with $\bbP( R_P(\cA(\cD_n; \xi)) \geq \epsilon ) < \delta$,
\[
	\bbP_P( \hT(\cA, \cD_N)  = 1) \leq \left[\alpha\left(1 + \frac{ \tilde\tau - R_{P,n}(\cA) }{R_{P}^{\max} - \tilde\tau } \right)^{N/n} \right] \wedge 1,
\]
if we assume $\tilde\tau < R_P^{\max}$ so that the denominator is positive.
\end{Proposition}

\subsection{A matching bound}\label{sec:matching_binomial}
We will now show that the upper bound on the power given in Theorem~\ref{thm:limits_evaluate} can be approximately
achieved, in certain settings, with a simple test that is based on splitting the data into independent batches.
For this result, we will consider the case that the loss $\ell$ takes values in $\{0,1\}$.
For example, in the case of a categorical response $Y$ (such as a binary label),
we might use the zero--one loss,
\[\ell(\hy,y) = \indi_{\hy\neq y},\]
while if the response $Y$ is real-valued, we might instead 
use a thresholded loss,
\[\ell(\hy,y) = \indi_{|\hy - y|>r},\]
which indicates whether the error of the prediction exceeds some allowed threshold $r$.

For this setting, we now construct a simple test based on splitting the data  $\cD_N$ into independent batches of size $n+1$, as follows.
For each batch $j = 1,\dots, \lfloor \frac{N}{n+1}\rfloor$, define a fitted model
\[\hf^{(j)} = \cA(\cD^{(j)};\xi^{(j)}) \textnormal{ where } \cD^{(j)} = \{(X_i,Y_i)\}_{i = (j-1)(n+1) + 1}^{j(n+1) - 1}\textnormal{ and }\xi^{(j)}\iidsim\textnormal{Unif}[0,1],\]
and evaluate the loss on the last data point of this batch,
\[\ell^{(j)} = \ell\big(\hf^{(j)}(X_{j(n+1)}),Y_{j(n+1)}\big) \in\{0,1\}.\]
Note that, by construction, $\ell^{(j)}$'s are i.i.d.\ draws from a Bernoulli distribution with parameter $R_{P,n}(\cA)$.
Therefore, we can define our test by comparing the sum $S = \sum_{j=1}^{\lfloor N/(n+1)\rfloor} \ell^{(j)}$ against a Binomial distribution: 
\begin{equation}\label{eqn:binomial-test}
	\begin{split}
		\hT_{\textnormal{Binom}}(\cA,\cD_N) = \begin{cases} 
1, & S < k_*,\\
1, & S = k_* \textnormal{ and }\zeta\leq a_*,\\
0, & \textnormal{ otherwise},
\end{cases}
	\end{split}
\end{equation}
for a random value $\zeta\sim\textnormal{Unif}[0,1]$,
where the nonnegative integer $k_*$ and value $a_*\in[0,1)$ are chosen as the unique values satisfying
\[
	\bbP( \textnormal{Binomial}( \textstyle{\lfloor \frac{N}{n+1}\rfloor}, \tau  )  < k^* ) + a^*\cdot  \bbP( \textnormal{Binomial}( \textstyle{\lfloor \frac{N}{n+1}\rfloor}, \tau  )  = k^* ) = \alpha.
\]

\begin{Theorem} \label{thm:binomial_power} 
The test $\hT_{\textnormal{Binom}}$ is a black-box test (as in Definition \ref{def:black-box-test}),
and satisfies the assumption-free validity condition~\eqref{eqn:validity}. Moreover,
if $\alpha < (1-\tau)^{\lfloor \frac{N}{n+1}\rfloor}$, then
for any $\cA$ and any $P$ with $R_{P,n}(\cA)<\tau$, the power of $\hT_{\textnormal{Binom}}$ is equal to
\[	\bbP( \hT_{\textnormal{Binom}}(\cA,\cD_N)  = 1) = \alpha\left(1 + \frac{ \tau - R_{P,n}(\cA) }{1 - \tau } \right)^{\lfloor N/(n+1)\rfloor} \wedge 1. \]
\end{Theorem}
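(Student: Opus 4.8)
The statement has three parts: (i) $\hT_{\textnormal{Binom}}$ is a black-box test, (ii) it satisfies the validity condition~\eqref{eqn:validity}, and (iii) under the stated condition on $\alpha$, its power has the stated closed form. Part (i) is essentially bookkeeping: I would check that the construction---partitioning $\cD_N$ into $\lfloor N/(n+1)\rfloor$ batches, calling $\cA$ once per batch on the first $n$ points, evaluating on the held-out point, summing the losses, and comparing to a randomized Binomial critical value---fits the template of Definition~\ref{def:black-box-test}. Each stage $r=j$ generates $\cD^{(j)}$ as the $j$-th block (a deterministic function of $\cD_N$, so $g^{(j)}$ just reads off coordinates) with a fresh random seed $\xi^{(j)}$, computes $\hf^{(j)}=\cA(\cD^{(j)};\xi^{(j)})$, the stopping time is the fixed value $\hr=\lfloor N/(n+1)\rfloor$, and the final function $g$ computes $S$ from the $\hf^{(j)}$'s and the data, draws on $\zeta$, and outputs $1$ or $0$ per~\eqref{eqn:binomial-test}.

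For part (ii), the key observation---already flagged in the text---is that because the batches are disjoint and the data are i.i.d.\ $P$, the random variables $\ell^{(1)},\dots,\ell^{(\lfloor N/(n+1)\rfloor)}$ are i.i.d.\ $\Ber(R_{P,n}(\cA))$, since $\bbE[\ell^{(j)}] = \bbE[\ell(\hf^{(j)}(X_{j(n+1)}),Y_{j(n+1)})] = R_{P,n}(\cA)$ by the definition~\eqref{eqn:define_risk} of algorithm risk. Hence $S\sim\textnormal{Binomial}(\lfloor N/(n+1)\rfloor, R_{P,n}(\cA))$. Under $H_0$, $R_{P,n}(\cA)\geq\tau$, so $S$ is stochastically \emph{larger} than a $\textnormal{Binomial}(\lfloor N/(n+1)\rfloor,\tau)$; since the test rejects for \emph{small} values of $S$, the rejection probability is maximized at $R_{P,n}(\cA)=\tau$, where by the defining equation for $(k_*,a_*)$ it equals exactly $\alpha$. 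I would make the stochastic-monotonicity step precise via a coupling (thin each Bernoulli) or by noting the rejection region $\{S<k_*\}\cup(\{S=k_*\}\cap\{\zeta\leq a_*\})$ is downward-closed in $S$, so its probability is nonincreasing in the Binomial parameter.

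For part (iii), under $H_1$ with $p := R_{P,n}(\cA) < \tau$, I compute the power exactly. The condition $\alpha < (1-\tau)^{\lfloor N/(n+1)\rfloor} = \bbP(\textnormal{Binomial}(\lfloor N/(n+1)\rfloor,\tau)=0)$ forces $k_* = 0$ (the mass at $S=0$ under $\tau$ already exceeds $\alpha$), so the test rejects precisely when $S=0$ and $\zeta\leq a_*$, with $a_* = \alpha/(1-\tau)^{\lfloor N/(n+1)\rfloor}$ from the defining equation. Therefore, writing $m=\lfloor N/(n+1)\rfloor$,
\[
\bbP(\hT_{\textnormal{Binom}}=1) = a_* \cdot \bbP(S=0) = \frac{\alpha}{(1-\tau)^m}\cdot (1-p)^m = \alpha\left(\frac{1-p}{1-\tau}\right)^m = \alpha\left(1 + \frac{\tau - p}{1-\tau}\right)^m,
\]
and intersecting with $1$ (a probability cannot exceed $1$) gives the claimed formula. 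The only subtlety worth stating carefully is the case where this product exceeds $1$: then $a_*$ as I've written it would be $\geq 1$, which is excluded since $a_*\in[0,1)$ is required; but this can only happen when $\alpha\geq(1-\tau)^m$ is false relative to $(1-p)^m$---i.e., the ``$\wedge 1$'' is handling the regime where the naive formula overshoots, and one checks that in that regime $k_*\geq 1$ and the power is still $\geq$ the truncated value, or one simply notes the clean $k_*=0$ analysis gives the stated identity whenever the right-hand side is $\leq 1$. I expect the main obstacle to be nothing deep---just being careful about the $\wedge 1$ truncation and the exact identification of $(k_*,a_*)$ under the hypothesis $\alpha < (1-\tau)^m$; the probabilistic content (i.i.d.\ Bernoulli batches, stochastic monotonicity) is immediate from the disjointness of the splits.
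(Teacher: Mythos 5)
Your proposal is correct and follows essentially the same route as the paper's proof: $S\sim\textnormal{Binomial}(\lfloor N/(n+1)\rfloor,R_{P,n}(\cA))$ from the disjoint batches, validity via stochastic dominance at the boundary $R_{P,n}(\cA)=\tau$, and the power computed from $k_*=0$, $a_*=\alpha/(1-\tau)^{\lfloor N/(n+1)\rfloor}$ under the hypothesis $\alpha<(1-\tau)^{\lfloor N/(n+1)\rfloor}$. The only slight muddle is your aside about the $\wedge 1$: under that hypothesis $a_*<1$ and $(1-R_{P,n}(\cA))^{\lfloor N/(n+1)\rfloor}\leq 1$, so the product is automatically below $1$ and the truncation is never active, exactly as in the paper's computation.
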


Comparing this power calculation to the result of Theorem~\ref{thm:limits_evaluate},
we can see that in a setting where $n,N$ are large and $R_P^{\max} \approx1$, which is typically the case for loss functions mentioned above as for each $\widehat{y}$, there is always a $y$ such that $\ell(\hy,y) = 1$, the power of the simple test $\hT_{\textnormal{Binom}}$
is nearly as high as the optimal power that can be achieved by any black-box test. 
This may seem counterintuitive: how can this simple test be (nearly) optimal,
given that it uses the data very inefficiently---without any more sophisticated strategies such as cross-validation,
resampling, synthetic or perturbed data, etc? Nonetheless, comparing Theorem \ref{thm:limits_evaluate} and Theorem \ref{thm:binomial_power}
reveals that the simple Binomial test, based only on sample splitting,
is nearly optimal because of the fundamental hardness of the problem.

\subsubsection{A concrete example.} To take a simple example to make this concrete,
 consider a binary response setting where $Y = f_*(X) \in\{0,1\}$ (i.e.,
there is an oracle classifier $f_*$ with perfect accuracy), with $\bbP_P(Y=0)= \bbP_P(Y=1) = 0.5$, i.e., each label is equally likely.
Fix $\tau=0.5$, so that our hypothesis test~\eqref{eqn:hypothesis_test} is simply asking whether the performance of $\cA$ is better
than random, and suppose we take $N=n+1$.
Then by Theorem~\ref{thm:binomial_power}, we see that the power of the simple test $\hT_{\textnormal{Binom}}$
is equal to $(2 - 2R_{P,n}(\cA))\alpha$, which is $2\alpha$ at most---a very low power when $\alpha$ is small. However,
we can compare this to 
Theorem~\ref{thm:limits_evaluate}, which tells us that it is impossible for \emph{any} black-box test to have power higher than $\approx (2-2R_{P,n}(\cA))^{1+1/n}\alpha$,
again at most $\approx 2\alpha$.
On the other hand, if $N/n$ is large, the power achieved by the test $\hT_{\textnormal{Binom}}$ might be close to 1.

\subsection{A note for unbounded loss} \label{sec:unbounded-loss}
Curious readers may wonder what would happen if the loss is unbounded, i.e., $\sup_{\hy\in\hcY,y\in\cY}\ell(\hy,y) = \infty$. It is not difficult to convenience ourselves that when the loss is unbounded, the problem becomes even harder. For \texttt{EvaluateAlg}, we can show that under the same setting as in Theorem \ref{thm:limits_evaluate}, no black-box test can have better-than-random power for inference on the risk of \emph{any} algorithm $\cA$.

\begin{Theorem} \label{thm:limits_evaluate_unbounded} 
Assume that either $|\cX| = \infty$ or $|\cY| = \infty$,
and that the loss $\ell$ is unbounded, i.e., $\sup_{\hy,y}\ell(\hy,y) = \infty$.
Let $\hT$ be a black-box test (as in Definition \ref{def:black-box-test}),
and assume that $\hT$ satisfies the assumption-free validity condition~\eqref{eqn:validity}.
Then the power of $\hT$ to detect low risk is bounded as follows:
for any $\cA$ and any $P$ with $R_{P,n}(\cA)< \tau$,
\[
	\bbP_P( \hT(\cA, \cD_N)  = 1) \leq \alpha.
\]
\end{Theorem}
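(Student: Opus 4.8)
The plan is to show that no black-box test can tell a low-risk instance apart from a carefully corrupted high-risk instance, and then to let validity do the rest. Fix a black-box test $\hT$ obeying~\eqref{eqn:validity} and a pair $(\cA^*,P^*)$ with $R_{P^*,n}(\cA^*)<\tau$; the target is $\bbP_{P^*}(\hT(\cA^*,\cD_N)=1)\leq\alpha$. For each $\epsilon>0$ I will give a randomized construction of an instance $(\cA',P')$ for which $R_{P',n}(\cA')\geq\tau$ holds for every realization of the construction, and for which, after averaging over that randomization, the law of $\hT(\cA',\cD_N)$ (over $\cD_N\sim (P')^N$) is within total variation $\epsilon$ of the law of $\hT(\cA^*,\cD_N)$ (over $\cD_N\sim (P^*)^N$). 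Since~\eqref{eqn:validity} forces $\bbP(\hT(\cA',\cD_N)=1)\leq\alpha$ for each realization, averaging and then invoking indistinguishability gives $\bbP_{P^*}(\hT(\cA^*,\cD_N)=1)\leq\alpha+\epsilon$; sending $\epsilon\to0$ finishes.

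The construction plants a rare catastrophic failure mode. Fix a small $q>0$. Since $|\cX|=\infty$ or $|\cY|=\infty$, there are infinitely many values of the corresponding coordinate carrying arbitrarily small $P^*$-mass; draw a flag value $z^\dagger$ uniformly from a large pool of such values, and call a data point \emph{flagged} if its relevant coordinate equals $z^\dagger$. Let $P'=(1-q)P^*+q\nu$, where $\nu$ is supported on flagged points and on one point $(x^\sharp,y^\sharp)$ for which some prediction $\hy^\sharp\in\hcY$ has $\ell(\hy^\sharp,y^\sharp)$ as large as desired (available since $\sup_{\hy,y}\ell=\infty$); thus $\TV(P',P^*)\leq q$. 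Define $\cA'$ to agree with $\cA^*$ on every dataset that contains no flagged point, and to return the constant model $\hf^\dagger\equiv\hy^\sharp$ on any dataset that does contain a flagged point. A size-$n$ training set from $P'$ contains a flagged point with probability $\gtrsim nq$, on which event $\cA'$ outputs $\hf^\dagger$, whose $P'$-risk is at least a constant multiple of $q\,\ell(\hy^\sharp,y^\sharp)$; taking $\ell(\hy^\sharp,y^\sharp)$ large enough forces $R_{P',n}(\cA')\geq\tau$. This is exactly the step that requires $\ell$ unbounded: the planted loss must be allowed to grow without bound as $q\to0$.

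For indistinguishability I will couple the two worlds on the event $\mathcal G$ that (a) the realized $\cD_N$ coincides under $P'$ and under $P^*$, and (b) none of the datasets $\cD^{(1)},\cD^{(2)},\dots$ that $\hT$ feeds to the algorithm over its run contains the value $z^\dagger$. Part (a) has probability $\geq (1-q)^N$: conditioned on the event that all $N$ draws from $P'$ come from its $P^*$ component, the sample is iid $P^*$, and we absorb the remaining probability into $\epsilon$ by taking $q$ small. On $\mathcal G$, a short induction over the rounds of Definition~\ref{def:black-box-test} shows that every fitted model $\hf^{(r)}$, and hence the final output of $\hT$, agrees across the two worlds, since $\cA'$ is only ever called on unflagged datasets and there behaves identically to $\cA^*$. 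For part (b), observe that in the $(\cA^*,P^*)$ world the run of $\hT$ does not reference $z^\dagger$ at all, terminates after finitely many rounds almost surely, and thus touches only a finite --- though possibly large --- collection of data points; after truncating to a high-probability event on which that collection has size at most some $K$, a union bound shows that a $z^\dagger$ drawn uniformly from a pool of more than $K/\epsilon$ admissible flag values lands in the touched collection with probability $\leq\epsilon$.

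The main obstacle is part (b): a black-box test is adaptive, may run for an unbounded data-dependent number of rounds, and may hand the algorithm arbitrarily large simulated datasets, so I must rule out that it \emph{accidentally} queries $\cA'$ on data carrying the secret flag $z^\dagger$, which would let it detect the catastrophe and break the coupling. This is precisely where the hypothesis $|\cX|=\infty$ or $|\cY|=\infty$ is used: it furnishes an inexhaustible reservoir of admissible flag values, so that whatever finite set of points $\hT$'s run happens to touch, a randomly chosen flag almost surely avoids it. A secondary technical point is realizing the perturbation $P'$ and the catastrophic model $\hf^\dagger$ simultaneously --- keeping $\TV(P',P^*)\leq q$ while still forcing $R_{P',n}(\cA')\geq\tau$ for every realization of $z^\dagger$ --- which the two-part support of $\nu$ is designed to handle, the high-loss point $(x^\sharp,y^\sharp)$ being chosen independently of $z^\dagger$.
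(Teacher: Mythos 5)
Your proposal is correct and follows essentially the same route as the paper: mix a small mass of a rare trigger point and a high-loss configuration into $P$, modify $\cA$ to output a catastrophic constant model whenever the trigger appears, exploit the unbounded loss to force $R_{P',n}(\cA')\geq\tau$, and couple the two runs so the black-box test cannot distinguish them, then invoke validity on $(\cA',P')$. Your randomized choice of the flag value $z^\dagger$ with a union bound over a truncated run is just a probabilistic-method rendering of the paper's Lemma~\ref{lem:find_rare_point}, and the remaining differences (two-sided TV coupling versus the paper's one-sided inequalities, a single high-loss point versus the $P_X\times\delta_{y_0}$ component) are cosmetic.
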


At the same time, when the loss is unbounded, assumption-free inference for \texttt{EvaluateModel} also becomes trickier, as the concentration for the estimator $ \frac{1}{|\cD_{\textnormal{hold}}|}\sum_{(X_i, Y_i) \in \cD_{\textnormal{hold}}} \ell(\hf_n(X_i), Y_i)$ could fail. {During the revision, one of the reviewers pointed out that under a slightly stronger assumption on the marginal distribution of $X$, i.e., assuming it is non-atomic, then assumption-free inference for the model risk is also impossible when the loss is unbounded.} In this scenario, we also need additional assumptions to infer model risk, see some examples in \cite{bousquet2002stability,celisse2016stability}.

On the other hand, we also find that the bounded loss assumption may not be essential in separating the difficulty of \texttt{EvaluateAlg} and \texttt{EvaluateModel}. In particular, if we slightly change the inference target from mean risk to ``median" risk, then the difference of \texttt{EvaluateAlg} and \texttt{EvaluateModel} will manifest again. We refer readers to Appendix A for more details.

\section{The role of algorithmic stability} \label{sec:results-evaluation-stability}

 Theorem~\ref{thm:limits_evaluate} suggests that to alleviate the hardness result for algorithm evaluation, we must consider placing assumptions on either the data distribution $P$ or the algorithm $\cA$, since 
any universally valid test $\hT$ must necessarily have low power. 

A large body of recent works suggests that in many contexts, algorithm stability helps CV-based methods achieve distribution-free guarantees \citep{kumar2013near,barber2021predictive,austern2020asymptotics,bayle2020cross}. 
This suggests that this might also be the case for the problem of addressing \texttt{EvaluateAlg}: is estimating the risk $R_{P,n}(\cA)$ achievable with a stability assumption, and no other conditions?
In this section, we will explore this possibility. Perhaps surprisingly, we will see that the hardness result of Theorem~\ref{thm:limits_evaluate} 
remains the same, except in an extremely high-stability regime. 

\subsection{Background on algorithmic stability}\label{sec:background_stability}
At a high level, \emph{algorithmic stability} expresses the idea that the fitted model produced
by an algorithm should not change massively under a slight perturbation to the input training data.
Algorithmic stability has long been a central notion in statistics and machine learning. Intuitively,
it is naturally a desirable property for learning algorithms---but it also leads to many downstream implications, as well.
For instance, it is known that stable algorithms tend to generalize better \citep{bousquet2002stability,shalev2010learnability}. It also plays a critical role in a number of different problems, such as model selection \citep{meinshausen2010stability}, reproducibility \citep{yu2013stability}, and predictive inference \citep{steinberger2018conditional,barber2021predictive}. Stability is known to hold for many simple learning algorithms such as $k$-nearest neighbors \citep{devroye1979distribution} and ridge regression \citep{bousquet2002stability}. It has also been shown to hold for any algorithm whose fitted models
are aggregated via bootstrapping or bagging \citep{breiman1996bagging,elisseeff2005stability,soloff2023bagging}.

There are also connections between stability and our ability to assess algorithmic performance;
for instance, \citet[Section 1]{dietterich1998approximate}, in their discussion of the distinction between questions of assessing
models versus assessing algorithms, pointed out that addressing questions such as \texttt{EvaluateAlg}
must ``rely on the assumption that the performance of learning algorithms changes smoothly with changes
in the size of the training data''. A recent line of literature gives precise quantitative results
for the problem of estimating risk under strong stability conditions: specifically,
recent work by \cite{austern2020asymptotics} establishes asymptotic normality
of the cross-validation estimator $\widehat{R}^{\textnormal{CV}}_n(\cA)$. 
Related work by \cite{bayle2020cross} also establishes asymptotic normality of $\widehat{R}^{\textnormal{CV}}_n(\cA)$ under 
weaker stability conditions.

On the other hand, \citet{kim2021black} established a hardness result for the problem of proving empirically that a black-box algorithm is stable (i.e., establishing, via a black-box test, that stability holds for a given algorithm $\cA$ whose theoretical properties are unknown).

\subsubsection{Defining algorithmic stability.}
There are many different definitions of algorithmic stability in the literature (see, e.g., \cite{bousquet2002stability} and Appendix A of \cite{shalev2010learnability} for some comparisons). In this work, we will consider the following notion of stability: 
  \begin{Definition}[Algorithmic stability] \label{def:ell_q_stability}
 Let $\cA$ be any algorithm, and let $P$ be any distribution. Fix sample size $n\geq 1$.
Then we define the $\ell_q$-stability of the triple $(\cA,P,n)$ as
 \[
 		\beta_q(\cA,P,n):=\max_{j=1,\ldots,n} \bbE\left[ \left| \ell(\hf_n(X_{n+1}), Y_{n+1}) - \ell(\hf_n^{- j}(X_{n+1}), Y_{n+1})\right|^q \right] ^{1/q},
\]
 where the trained models are given by
 \[\hf_n = \cA\left(\{(X_i,Y_i)\}_{i\in[n]}; \xi\right), \quad \hf_n^{-j} = \cA\left(\{(X_i,Y_i)\}_{i\in[n]\backslash\{j\}}; \xi\right),\]
 and where the expectation is taken with respect to data points $(X_1,Y_1),\dots,(X_n,Y_n),(X_{n+1},Y_{n+1})$  sampled i.i.d.\ from $P$ and an independent random seed $\xi \sim \textnormal{Unif}[0,1]$.
 \end{Definition}

\begin{figure}[t]
\centering\medskip

\fbox{\begin{tikzpicture}
\begin{scope}[xshift=0cm,yshift=0cm]
\node (beta) at (6,-0.7) {Bound on $\ell_1$-stability parameter $\beta_1(\cA,P,n)$};
\node (zero) at (0,0.1) {$0$};
\node (midpt) at (3,0.06) {$\mathcal{O}(n^{-1})$};
\node (infty) at (12,0.1) {$+\infty$};
\node (zeroline) at (0,0.5) {$\bullet$};
\node (midptline) at (3,0.5) {$\bullet$};
\draw[-latex, line width=0.5mm] (0,0.5) -> (12,0.5);
\draw [decorate, decoration = {brace, raise = 5pt, amplitude = 5pt}, line width=0.25mm] (0,0.5) --  (2.8,0.5)
	node[pos=0.5,above=10pt,black]{consistency regime};
\draw [decorate, decoration = {brace, raise = 5pt, amplitude = 5pt}, line width=0.25mm] (3.2,0.5) --  (11.6,0.5)
	node[pos=0.5,above=10pt,black]{impossibility regime};
\end{scope}
\end{tikzpicture}}

\bigskip

\fbox{\begin{tikzpicture}
\begin{scope}[xshift=0cm,yshift=0cm]
\node (beta) at (6,-0.7) {Bound on $\ell_2$-stability parameter $\beta_2(\cA,P,n)$};
\node (zero) at (0,0.1) {$0$};
\node (midpt) at (5,0.06) {$\mathcal{O}(n^{-1/2})$};
\node (infty) at (12,0.1) {$+\infty$};
\node (zeroline) at (0,0.5) {$\bullet$};
\node (midptline) at (5,0.5) {$\bullet$};
\draw[-latex, line width=0.5mm] (0,0.5) -> (12,0.5);
\draw [decorate, decoration = {brace, raise = 5pt, amplitude = 5pt}, line width=0.25mm] (0,0.5) --  (4.8,0.5)
	node[pos=0.5,above=10pt,black]{consistency regime};
\draw [decorate, decoration = {brace, raise = 5pt, amplitude = 5pt}, line width=0.25mm] (5.2,0.5) --  (11.6,0.5)
	node[pos=0.5,above=10pt,black]{impossibility regime};
\end{scope}
\end{tikzpicture}}
\caption{An illustration of the phase transition for 
performing inference on $R_{P,n}(\cA)$, relative to the $\ell_1$- or $\ell_2$-stability of the algorithm.
In the ``consistency'' regime, the questions \texttt{EvaluateAlg} and \texttt{EvaluateModel} are essentially equivalent
(as discussed in Section~\ref{sec:stability_or_consistency}), 
while in the ``impossibility'' regime, Theorem~\ref{thm:limits_evaluate_stability} establishes fundamental
limits for performing inference on the question \texttt{EvaluateAlg}. (Later on, in Theorem~\ref{thm:limits_compare_stability}, we will see that a similar phase transition
holds for the questions \texttt{CompareAlg} and \texttt{CompareModel}, as well.)}
\label{fig:phase_transition}
\end{figure}

\subsection{Stability or consistency? Defining two regimes}\label{sec:stability_or_consistency}
First, we consider a regime with a strong stability assumption,
\begin{equation}\label{eqn:beta_littleo}\beta_1(\cA,P,n) = \mathrm{o}(n^{-1}) \textnormal{ \ or \ }\beta_2(\cA,P,n) = \mathrm{o}(n^{-1/2}).\end{equation}
The work of \cite{austern2020asymptotics}, mentioned above, studies the cross-validation estimator
$\widehat{R}^{\textnormal{CV}}_n(\cA)$ under a stability condition that is roughly equivalent to 
the assumption that $\beta_2(\cA,P,n) = \mathrm{o}(n^{-1/2})$. \footnote{The related work of \cite{bayle2020cross} places a $\mathrm{o}(n^{-1/2})$ bound on a related notion of $\ell_2$-stability referred to as the loss stability (also studied earlier by  \citet{kumar2013near}). Loss stability is a strictly weaker stability condition, and
 in general is not sufficient for a consistency result (i.e., Proposition~\ref{prop:consistency_regime}).}

In fact, this level of stability can be interpreted as a \emph{consistency} assumption on the risk---as the following proposition shows.
\begin{Proposition}\label{prop:consistency_regime}
Fix any deterministic algorithm $\cA$, any distribution $P$, and any $n\geq 1$. Let $\hf_n = \cA(\{(X_i,Y_i)\}_{i\in n})$ where the data points
$(X_i,Y_i)$ are sampled i.i.d.\ from $P$. Then it holds that
\[\bbE\left[\left|R_P(\hf_n) - R_{P,n}(\cA)\right|\right] \leq 2n \beta_1(\cA,P,n).\]
and
\[\bbE\left[\left(R_P(\hf_n) - R_{P,n}(\cA)\right)^2\right]^{1/2} \leq \sqrt{n} \beta_2(\cA,P,n)\]
\end{Proposition}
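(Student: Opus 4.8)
The plan is to observe that, since $R_{P,n}(\cA) = \bbE[R_P(\hf_n)]$, both displayed inequalities are statements about how the random variable $Z := R_P(\hf_n)$ --- a function of the i.i.d.\ training sample $D = ((X_i,Y_i))_{i\in[n]}$ --- concentrates around its own mean, with the stability parameter $\beta_q$ controlling the effect of perturbing one training point. Everything will flow from a single common estimate. Writing $\hf_n^{-j} = \cA(D\setminus\{(X_j,Y_j)\})$ and applying (conditional) Jensen's inequality to the expectation over the fresh test point $(X_{n+1},Y_{n+1})$,
\[
\bbE\big[\,|R_P(\hf_n) - R_P(\hf_n^{-j})|^q\,\big] \;\le\; \bbE\big[\,|\ell(\hf_n(X_{n+1}),Y_{n+1}) - \ell(\hf_n^{-j}(X_{n+1}),Y_{n+1})|^q\,\big] \;\le\; \beta_q(\cA,P,n)^q,
\]
for $q\in\{1,2\}$, where the last inequality is just the definition of $\beta_q$ in Definition~\ref{def:ell_q_stability} as a maximum over $j$.

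For the $\ell_2$ bound I would apply the Efron--Stein inequality, $\mathrm{Var}(Z) \le \sum_{j=1}^n \bbE\big[\mathrm{Var}\big(Z \mid (X_i,Y_i)_{i\ne j}\big)\big]$. Since $R_P(\hf_n^{-j})$ is measurable with respect to $(X_i,Y_i)_{i\ne j}$, and the conditional mean is the $L_2$-optimal such approximation of $Z$, each summand is at most $\bbE\big[(R_P(\hf_n) - R_P(\hf_n^{-j}))^2\big] \le \beta_2(\cA,P,n)^2$ by the common estimate. Summing over $j$ yields $\bbE\big[(R_P(\hf_n) - R_{P,n}(\cA))^2\big] = \mathrm{Var}(Z) \le n\,\beta_2(\cA,P,n)^2$, and taking square roots gives the second claim.

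For the $\ell_1$ bound I would run a one-coordinate-at-a-time (Lindeberg / hybrid-coupling) argument. Let $D' = ((X_i',Y_i'))_{i\in[n]}$ be an independent copy of $D$; then $R_{P,n}(\cA) = \bbE_{D'}[R_P(\cA(D'))]$, so by Jensen $\bbE|R_P(\hf_n) - R_{P,n}(\cA)| \le \bbE_{D,D'}\big|R_P(\cA(D)) - R_P(\cA(D'))\big|$. Interpolating through hybrids $D = D^{(0)}, D^{(1)}, \dots, D^{(n)} = D'$, where $D^{(j)}$ takes its first $j$ coordinates from $D'$ and the remaining ones from $D$, a telescoping sum and the triangle inequality reduce the bound to $\sum_{j=1}^n \bbE\big|R_P(\cA(D^{(j-1)})) - R_P(\cA(D^{(j)}))\big|$. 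For fixed $j$, the datasets $D^{(j-1)}$ and $D^{(j)}$ agree in every coordinate except the $j$-th, hence yield the same leave-$j$-out dataset; inserting $R_P$ of the corresponding leave-$j$-out fit and using the triangle inequality bounds the $j$-th term by two quantities, each of which is an instance of the common estimate with $q=1$ (here one uses that each hybrid $D^{(j-1)}$ is itself $n$ i.i.d.\ draws from $P$), hence $\le \beta_1(\cA,P,n)$. Summing the $n$ steps produces the factor $2n$.

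The main thing to get right --- rather than a genuine obstacle --- is the bookkeeping in the $\ell_1$ hybrid step: one must verify that each hybrid $D^{(j-1)}$ is genuinely a sample of $n$ i.i.d.\ points from $P$ (it is, being a mixture of unprimed and primed points), so that the common estimate applies verbatim, and that deleting coordinate $j$ from $D^{(j-1)}$ and from $D^{(j)}$ returns literally the same $(n-1)$-point dataset. I would also flag the harmless integrability caveat that both displayed bounds are vacuous when their right-hand sides are infinite, so one may assume the relevant moments of $Z$ are finite when invoking Efron--Stein.
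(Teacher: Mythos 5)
Your proposal is correct and follows essentially the same route as the paper: the same Jensen reduction from loss differences to risk differences, the same hybrid (replace-one-coordinate-at-a-time) telescoping argument with the leave-$j$-out fit inserted via the triangle inequality for the $\ell_1$ bound, and the Efron--Stein inequality with the leave-one-out risk as the benchmark for the $\ell_2$ bound. The only cosmetic difference is that you invoke Efron--Stein in its conditional-variance form with the $L_2$-projection property, whereas the paper uses the resample-one-coordinate form and bounds the conditional variance by the squared deviation from $R_P(\hf_n^{-j})$; these are equivalent and give the same constant $n\beta_2^2$.
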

\noindent Each bound is a straightforward consequence of the definition of $\ell_1$- or $\ell_2$-stability.
This result follows standard calculations and is similar to other results found in the stability literature, e.g., \cite[Lemma 4]{celisse2016stability}.
(A more technical form of this result can also be established for the case of a randomized algorithm $\cA$,
but for intuition, we restrict our attention to the deterministic case.)

Proposition \ref{prop:consistency_regime} implies that, under the strong stability assumption~\eqref{eqn:beta_littleo},
it holds that
$\bbE[(R_P(\hf_n) - R_{P,n}(\cA))^2] = \mathrm{o}(1)$.
Consequently, if we assume that this condition holds, we can estimate $R_{P,n}(\cA)$ simply by 
estimating the risk $R_P(\hf_n)$ of a \emph{single} model $\hf_n$ fitted on a training set of size $n$, which is relatively easy as long as we have a large holdout data as we have mentioned in Section \ref{sec:bounded-loss}.

At a high level, then, if we assume $\beta_q(\cA,P,n) = \mathrm{o}(n^{-1/q})$ for
either $q=1$ or $q=2$, then the distinction between questions \texttt{EvaluateAlg} and \texttt{EvaluateModel}
has essentially disappeared; estimating the risk of $\cA$ is ``easy'' for the trivial reason that it suffices
to estimate the risk $R_P(\hf_n)$ of a single fitted model produced by $\cA$.
(Of course, we would not expect this to be the optimal strategy for estimation and inference on $R_{P,n}(\cA)$.
In fact, in the setting $\beta_2(\cA,P,n) = \mathrm{o}(n^{-1/2})$ along with some additional assumptions,
 the work of \cite{austern2020asymptotics} establishes precise central limit theorem type result for the CV error which enables inference for the algorithm risk;
we do not describe the details since this is beyond the scope of the present paper.) A few common algorithms which satisfy this level of stability are $\ell_q$ regularized algorithms ($q > 1$) or soft-margin SVM \citep{bousquet2002stability,elisseeff2005stability,wibisono2009sufficient}.
On the other hand, in many practical settings, even relatively simple algorithms exhibit high variability
and do not obey a consistency type property, i.e., $\beta_q(\cA,P,n)$ may be larger than $\mathcal{O}(n^{-1/q})$ or may even
be $\mathcal{O}(1)$. This happens for instance in linear regression when $d \approx n$ or in some sparse algorithms \citep{xu2011sparse}. 

We will now present our main results, showing that outside of the consistency regime, 
there are fundamental limits on our ability to address \texttt{EvaluateAlg} with any black-box procedure. 
In particular, for $\beta_q(\cA,P,n)\gtrsim n^{-1/q}$, we will establish a hardness result, analogous to Theorem~\ref{thm:limits_evaluate},
to show that outside of the consistency regime it is impossible to perform inference on \texttt{EvaluateAlg}
with limited data. See Figure~\ref{fig:phase_transition} for an illustration of the phase transition between these two regimes.

\subsection{Limitations under stability}
We next present our main theorem in the setting of algorithmic stability.
The following theorem holds for both $q=1$ and $q=2$, i.e., for both $\ell_1$-stability and $\ell_2$-stability.

\begin{Theorem} \label{thm:limits_evaluate_stability} 
Assume that either $|\cX| = \infty$ or $|\cY| = \infty$,
and that the loss $\ell$ takes values in $[0,B]$.
Let $\hT$ be a black-box test (as in Definition \ref{def:black-box-test}),
and assume that $\hT$ satisfies the stability-constrained validity condition,
\begin{equation}\label{eqn:validity_stable}
\bbP_P( \hT(\cA,\cD_N) = 1)\leq \alpha \textnormal{ for all $\cA,P$ such that $\beta_q(\cA,P,n)\leq \gamma_q$ and $R_{P,n}(\cA)\geq \tau$,}
\end{equation}
for some stability parameter $\gamma_q \geq 2B/n^{1/q}$.
Let $\tilde\tau = \tau(1 + \frac{\alpha^{-1}-1}{N} ) $. 

Then the power of $\hT$ to detect low risk is bounded as follows:
for any $\cA$ and any $P$ with $\beta_q(\cA,P,n)\leq \gamma_q$ and $R_{P,n}(\cA)< \tau$,
\[
	\bbP_P( \hT(\cA, \cD_N)  = 1) \leq \left[\alpha\left(1 + \frac{ \tilde\tau - R_{P,n}(\cA) }{R_{P}^{\max} - \tilde\tau } \right)^{N/n} \right] \wedge 1,
\]
if we assume $\tilde\tau < R_P^{\max}$ so that the denominator is positive.
\end{Theorem}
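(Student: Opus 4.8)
The plan is to rerun the construction behind Theorem~\ref{thm:limits_evaluate}, now keeping the stability of the adversarial instance under control. Note first that Theorem~\ref{thm:limits_evaluate_stability} is \emph{not} a formal corollary of Theorem~\ref{thm:limits_evaluate}: the validity requirement~\eqref{eqn:validity_stable} is strictly weaker than~\eqref{eqn:validity} (we only constrain the Type~I error of $\hT$ against \emph{stable}, high-risk instances), so the earlier theorem cannot be invoked as a black box and the hard instance must additionally be certified to be $\ell_q$-stable at level $\gamma_q$. Fix any $(\cA,P)$ with $\beta_q(\cA,P,n)\le\gamma_q$ and $R_{P,n}(\cA)<\tau$, and let $\cT$ denote the full transcript generated while running $\hT$ on $(\cA,\cD_N)$ --- that is, $\cD_N$ together with all datasets $\cD^{(r)}$, fitted models $\hf^{(r)}$, and seeds $\zeta^{(r)},\xi^{(r)},\zeta$ from Definition~\ref{def:black-box-test} --- so that $\hT(\cA,\cD_N)$ is a function of $\cT$. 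I would produce a companion pair $(\cA',P')$ with (i) $R_{P',n}(\cA')\ge\tau$; (ii) $\beta_q(\cA',P',n)\le\gamma_q$; and (iii) the law of $\cT$ under $(\cA,P)$ absolutely continuous with respect to its law under $(\cA',P')$, with Radon--Nikodym derivative at most $M:=\big(1+\frac{\tilde\tau-R_{P,n}(\cA)}{R_P^{\max}-\tilde\tau}\big)^{N/n}$ on the event $\{\hT=1\}$. Given (i)--(iii), condition~\eqref{eqn:validity_stable} applies to $(\cA',P')$ and a change of measure yields
\[
\bbP_P(\hT(\cA,\cD_N)=1)\ \le\ M\cdot\bbP_{P'}(\hT(\cA',\cD_N)=1)\ \le\ M\alpha ,
\]
which, after intersecting with $1$, is the claimed bound.

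For the companion I would reuse the device from the proof of Theorem~\ref{thm:limits_evaluate}: $(\cA',P')$ "impersonates" $(\cA,P)$ on data the test has actually observed, but injects maximally-wrong predictions (conditional risk $R_P^{\max}$) on feature values absent from the training set --- keying this override on the feature coordinate when $|\cX|=\infty$, and on the full data point when $|\cY|=\infty$. Since the test has no ground truth off the observed support, it cannot tell these injected errors apart from genuine errors of an \emph{arbitrary} algorithm $\cA$; this is what forces (iii). Calibrating the override so that $R_{P',n}(\cA')$ reaches the inflated threshold $\tilde\tau\ge\tau$ is exactly the step that produces the base $1+\frac{\tilde\tau-R_{P,n}(\cA)}{R_P^{\max}-\tilde\tau}$, while the exponent $N/n$ and the precise form $\tilde\tau=\tau\big(1+\frac{\alpha^{-1}-1}{N}\big)$ come from the same batching/change-of-measure bookkeeping as in the proof of Theorem~\ref{thm:limits_evaluate}, which I would invoke without change.

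The genuinely new ingredient is (ii). Comparing the two leave-one-out models $\hf_n=\cA'(\{(X_i,Y_i)\}_{i\in[n]};\xi)$ and $\hf_n^{-j}=\cA'(\{(X_i,Y_i)\}_{i\in[n]\setminus\{j\}};\xi)$, their loss on the fresh point $(X_{n+1},Y_{n+1})$ can differ for only two reasons: (a) the underlying call to $\cA$ differs between $n$ and $n-1$ points --- this contribution is governed exactly by $\beta_q(\cA,P,n)\le\gamma_q$; and (b) the "seen/unseen" bookkeeping driving the override changes when point $j$ is removed --- but this alters the prediction at $X_{n+1}$ only when $X_{n+1}$ collides with the deleted point, an event of probability $O(1/n)$ once one reduces to a non-atomic marginal, contributing at most $2Bn^{-1/q}$ in $\ell_q$-norm (using $|\cdot|\le B$ for a difference of losses in $[0,B]$). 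Combining, one gets a bound of the form $\beta_q(\cA',P',n)^q\le(1-p)\gamma_q^q+p\,(2B)^q/n\le\gamma_q^q$, where the final inequality holds \emph{precisely because} $\gamma_q\ge 2Bn^{-1/q}$; this is exactly where that threshold on $\gamma_q$ is consumed, which is reassuring as to the shape of the construction.

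The main obstacle is reconciling the three demands in a single instance: the adversarial behavior must be invisible to $\hT$ (hence keyed to unseen data rather than to a random seed, which $\hT$ controls), yet keyed in a way that remains insensitive --- up to the unavoidable $O(Bn^{-1/q})$ slack --- to deleting one training point, and it must simultaneously carry the risk up to $\tilde\tau$ with the correct transcript likelihood ratio. The cleanest route is to check that the instance already constructed for Theorem~\ref{thm:limits_evaluate} enjoys property (ii), so that the present proof is that proof plus the short stability estimate above; the remaining technical nuisances are the reduction from a general marginal to the non-atomic case needed to bound the collision probability, and the bookkeeping of the $|\cX|=\infty$ versus $|\cY|=\infty$ case split, neither of which should affect the stated bound.
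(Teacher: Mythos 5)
Your high-level plan---build a companion pair $(\cA',P')$ that is stable, has risk at least $\tau$, and whose test transcript is close in likelihood ratio to that of $(\cA,P)$, then invoke~\eqref{eqn:validity_stable} and change measure---is exactly the paper's strategy, and your stability bookkeeping (an $\mathcal{O}(1/n)$-probability event contributing roughly $B^q\cdot 2/n$ to $\beta_q^q$, absorbed precisely by $\gamma_q\geq 2B/n^{1/q}$) matches the paper's Step 4 in spirit. But the construction you actually describe does not work, and the part you defer (``the same batching/change-of-measure bookkeeping as in the proof of Theorem~\ref{thm:limits_evaluate}'') does not exist to be invoked: in the paper, Theorem~\ref{thm:limits_evaluate} is itself proved as a corollary of Theorem~\ref{thm:limits_evaluate_stability} (by taking $\gamma_q=B$), so the construction producing the base $1+\frac{\tilde\tau-R_{P,n}(\cA)}{R_P^{\max}-\tilde\tau}$ and the exponent $N/n$ must be supplied here, not imported.

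The concrete flaw is in how you key the override. You have $\cA'$ return predictions that are maximally wrong at feature values absent from its training input, arguing that the test ``has no ground truth off the observed support.'' It does: the transcript contains the fitted models themselves, and the test may train on a strict subset $\cD^{(r)}\subsetneq\cD_N$ and evaluate the returned model at held-out points of $\cD_N$, whose labels it knows (this is exactly what CV-type black-box tests do). Under your companion every such holdout evaluation shows near-maximal loss, while under $(\cA,P)$ it reflects the true risk of $\cA$; hence the two transcript laws are grossly different and no likelihood-ratio bound of the form (iii) with your $M$ can hold (a calibrated holdout test already separates them). Relatedly, with a non-atomic marginal your override fires at the test point almost surely, so $R_{P',n}(\cA')\approx R_P^{\max}$ rather than being ``calibrated to $\tilde\tau$,'' and your collision event in (b) has probability $0$, not $\mathcal{O}(1/n)$. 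The paper's mechanism is different: using Lemma~\ref{lem:find_rare_point} (this is where $|\cX|=\infty$ or $|\cY|=\infty$ enters), pick a point $(x_*,y_*)$ whose probability of appearing anywhere in the run of $\hT$ on $(\cA,\cD_N)$ with $\cD_N\sim P^N$ is at most $\epsilon$; set $P'=(1-c)P_*+c\,\delta_{(x_*,y_*)}$ (where $P_*$ is $P$ conditioned on $(X,Y)\neq(x_*,y_*)$), and let $\cA'$ output a fixed near-worst function $f_*$ with $R_P(f_*)\geq R_P^{\max}-\epsilon$ \emph{only when $(x_*,y_*)$ lies in its training input}, behaving exactly as $\cA$ otherwise. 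The trigger essentially never fires in the transcript under $P$ (indistinguishability up to $\epsilon$, with the likelihood-ratio cost $(1-c)^{-N}$ coming solely from tilting $P$ to $P'$), yet under $P'$ it fires with probability $1-(1-c)^n$ on a size-$n$ training set, which pushes $R_{P',n}(\cA')$ above $\tau$ for a suitable $c$; optimizing over $c$ yields the stated base and exponent, and the $2/(n+1)$ stability cost comes from the exchangeability argument that $(x_*,y_*)$, if present in the $(n+1)$-point sample, appears only at position $j$ or $n+1$ with small conditional probability---not from a collision of $X_{n+1}$ with the deleted point. Without this rare-point/tilted-distribution device, your proposal is missing the step that makes the companion simultaneously high-risk, stable, and invisible to the test.
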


Theorem~\ref{thm:limits_evaluate_stability} shows that, when we are outside the consistency regime (i.e., 
when the bound on $\ell_q$ stability $\gamma_q$ is at least as large as $\mathcal{O}(n^{-1/q})$), 
the problem of estimating $R_{P,n}(\cA)$ is just as hard as if we had no stability assumption at all (as in Theorem~\ref{thm:limits_evaluate}). Thus, there is no intermediate regime on our ability
to answer the question \texttt{EvaluateAlg}: instead, we have a phase transition
between the consistency regime, $\gamma_q = \mathrm{o}(n^{-1/q})$,  and the hardness results of our theorems, when $\gamma_q\gtrsim n^{-1/q}$.
As illustrated in Figure~\ref{fig:phase_transition}, we must either assume that we are in the consistency regime (in 
which case, it is essentially equivalent to simply answer the easier question \texttt{EvaluateModel}),
or without this assumption, it is impossible to have high power for testing \texttt{EvaluateAlg}. {Theorem~\ref{thm:limits_evaluate_stability} also reveals that the the stability assumption $\beta_2(\cA,P,n) = \mathrm{o}(n^{-1/2})$ assumed in \cite{austern2020asymptotics} for inference of $R_{P,n}(\cA)$ via CV error is necessary in general.}

\begin{Remark}\label{rem:comparison}
	As we have mentioned before, our results are built on the black-box testing framework established in \cite{kim2021black}. The focus of their work (i.e., establishing the hardness of testing algorithmic stability) is different than the aim of inference on algorithmic risk considered here; however, some of the proof techniques are related, particularly in the constructions of counterexamples. Here we briefly comment on some of the ways that this work extends the technical tools developed by \cite{kim2021black}. First, our work allows for more general X and Y settings (see some discussions in Remark \ref{rem:cardinality-assumption} and Appendix B.1) and and it paves the way for considering the limits of tests with computational budget constraints in settings when $|\cX|$ and $|\cY|$ are finite. Partial progress has been made in \cite{luo2024algorithmic}. Second, compared to the existing framework, we also considered relaxing the assumption-free validity in studying \texttt{EvaluateAlg} by exploring how an algorithm stability assumption helps in \texttt{EvaluateAlg}. The proof of the negative result in Theorem \ref{thm:limits_evaluate_stability} also goes beyond the original framework in \cite{kim2021black}. 
\end{Remark}

\section{Comparing multiple algorithms} \label{sec:results-comparison}
In many practical settings, we may have multiple options for which algorithm to use---for example, different regression methods,
different architectures for a neural network, different choices of a tuning parameter, etc.
In this scenario, we therefore need to compare the performance of multiple algorithms, to determine which is best
for a particular application.
In this section, we examine the question of comparing the performance of two algorithms, $\cA_0$ and $\cA_1$,
to build an understanding of the challenges of algorithm comparison.

\subsection{Problem formulation: comparing two algorithms}
We first recall the questions \texttt{CompareAlg} and \texttt{CompareModel} introduced in Section~\ref{sec:intro}.
Given two algorithms $\cA_0,\cA_1$, we would like to determine which one is a better match for a particular data analysis task:
\begin{equation*}
	\texttt{CompareAlg}\textnormal{: which algorithm, } \cA_0 \textnormal{ or }\cA_1 \textnormal{, performs better on data drawn from } P?
\end{equation*}

Of course, as for the problem of evaluating a single algorithm, we need to make a distinction between comparing $\cA_0,\cA_1$,
versus comparing the fitted models produced by these algorithms: 
\begin{equation*}
	\texttt{CompareModel}\textnormal{: which fitted model, } \hf_0 \textnormal{ or }\hf_1 \textnormal{, performs better on data drawn from } P?
\end{equation*}

To formalize these questions, we again need to consider our notion of risk. 
Given two models $f_0,f_1:\cX\rightarrow \hcY$, 
we will define the quantity
\[\Delta_P(f_0,f_1) = \bbE_P\left[ \psi( f_0(X), f_1(X),Y)\right],\]
where $\psi: \hcY\times \hcY\times\cY\rightarrow \bbR$ is a ``comparison function''. For example,
we might choose to  take a difference in losses,
\begin{equation} \label{def:psi-difference-comp}
	\psi(y',y'',y) =  \ell(y',y)-\ell(y'',y)
\end{equation}
so that $\Delta_P(f_0,f_1) = R_P(f_0) - R_P(f_1)$ simply measures the difference in risk for the two models $f_0,f_1$. Alternatively, we might choose 
\begin{equation}\label{def:psi-probability-comp}
	\psi(y',y'',y) = \indi_{\ell(y',y)> \ell(y'',y)}- \indi_{\ell(y',y)<\ell(y'',y)},
\end{equation}
so that $\Delta_P(f_0,f_1)$
compares the frequency of how often one function has a smaller loss than the other.
From this point on, we will assume implicitly that $\psi$ satisfies an antisymmetry condition,
\[\psi(y',y'',y) = - \psi(y'',y',y), \textnormal{ for all $y\in\cY$, $y',y''\in\hcY$},\] 
so that comparing $f_0$ against $f_1$  is equivalent (up to sign) to comparing $f_1$ against $f_0$.
We will take the convention that a positive value of $\psi(y',y'',y)$ indicates that $y''$ is better than $y'$, as a prediction for $y$ (as in the examples above);
in other words, $\Delta_P(f_0,f_1) > 0$ indicates that $f_1$ is better than $f_0$, for the problem of modeling data from distribution $P$.

Now, given algorithms $\cA_0,\cA_1$ for modeling our data, 
we are ready to formalize our comparison questions, \texttt{CompareAlg} and \texttt{CompareModel}.
\begin{equation*}
	\texttt{CompareAlg}\textnormal{: what is } \Delta_{P,n}(\cA_0,\cA_1)? \quad \textnormal{ versus }\quad  \texttt{CompareModel}\textnormal{: what is } \Delta_P(\hf_{0,n},\hf_{1,n})?
\end{equation*}

The question \texttt{CompareModel} compares the fitted models 
$\hf_{0,n},\hf_{1,n}$ obtained by training these algorithms on a particular data set of size $n$, given by $\hf_{l,n} = \cA_l\big(\{(X_i,Y_i)\}_{i \in [n]};\xi\big)$ for each $l=0,1$.\footnotemark\, In contrast,
 the question \texttt{CompareAlg} takes an expected value over the training set,
\[\Delta_{P,n}(\cA_0,\cA_1) = \bbE_P\left[\Delta_P(\hf_{0,n},\hf_{1,n})\right],\]
with expected value taken with respect to the distribution of both the training data $\{(X_i,Y_i)\}_{i \in [n]}$ and also the random seed $\xi\sim\textnormal{Unif}[0,1]$. 

\footnotetext{In this section we take the convention that the two algorithms, $\cA_0$ and $\cA_1$, are run with the same random seed $\xi$.
The same types of results will hold regardless of this choice, with appropriate modifications to the definitions.}

Similarly to the hypothesis test~\eqref{eqn:hypothesis_test} defined
for the problem of algorithm evaluation, we now consider addressing \texttt{CompareAlg} via performing a hypothesis testing problem regarding whether $\Delta_{P,n}(\cA_0,\cA_1)$ lies below zero or above zero,
\begin{equation}\label{eqn:hypothesis_test_comp}H_0:\,  \Delta_{P,n}(\cA_0,\cA_1) \leq 0 \textnormal{ \ versus \ } H_1:\,  \Delta_{P,n}(\cA_0,\cA_1) > 0,\end{equation}
Rejecting the null hypothesis means that there is enough evidence to conclude that $\cA_1$ is better than $\cA_0$.

\subsection{Defining a black-box test}
Similar to Definition \ref{def:black-box-test}, we can also define a black-box test in the algorithm comparison setting
\begin{Definition}[Black-box test for algorithm comparison] \label{def:black_box_test_comp}
	Let $\mathfrak{D} = \cup_{m \geq 0} (\cX \times \cY)^m$ denotes the space of data sets of any size. Consider any test $\hT$ that takes as input two algorithms $\cA_0$, $\cA_1$, a dataset $\cD\in \mathfrak{D}$, and returns a (possibly randomized) output $\hT(\cA_0, \cA_1,\cD)\in\{0,1\}$. We say that $\hT$ is a black-box test if it can be defined as follows: for some functions $g^{(1)},g^{(2)},\dots,g$, and for random seeds, $\zeta^{(1)},\zeta^{(2)},\dots,\zeta\iidsim  \textnormal{Unif}[0,1]$, 
		 \begin{enumerate}
	 	\item At stage $r =1$, generate a new dataset and a new random seed,
		\[(\cD^{(1)},\xi^{(1)} ) = g^{(1)} (\cD, \zeta^{(1)} ),\]
		and compute the fitted models $\hf_0^{(1)} = \cA_0( \cD^{(1)}; \xi^{(1)} )$ and $\hf_1^{(1)} = \cA_1( \cD^{(1)}; \xi^{(1)} )$. 
	 	\item For each stage $r = 2,3,\ldots$, generate a new dataset and a new random seed,
		\[(\cD^{(r)},\xi^{(r)} ) = g^{(r)} \big(\cD, (\cD^{(s)})_{1\leq s<r}, (\hf_l^{(s)})_{1\leq s<r;\, l=0,1}, (\zeta^{(s)})_{1\leq s\leq r} , (\xi^{(s)})_{1\leq s<r}\big),\]
		and compute the fitted models
		$\hf_0^{(r)} = \cA_0( \cD^{(r)}; \xi^{(r)} )$ and $\hf_1^{(r)} = \cA_1( \cD^{(r)}; \xi^{(r)} )$.
	 	\item Repeat until some stopping criterion is reached; let $\hr$ denote the (data-dependent) total number of rounds.
	 	\item Finally, return
		\[\hT(\cA_0, \cA_1,\cD) = g\big(\cD,(\cD^{(r)})_{1\leq r\leq \hr},(\hf_l^{(r)})_{1\leq r\leq \hr;\, l=0,1},(\zeta^{(r)})_{1\leq r\leq \hr},(\xi^{(r)})_{1\leq r\leq \hr},\zeta\big).\]
	 \end{enumerate}
\end{Definition}

\subsection{Main results for algorithm comparison}
In this section, we present our main result on the limit of any universally valid black-box test on the testing problem \eqref{eqn:hypothesis_test_comp}. Again, we would like to provide an answer to \texttt{CompareAlg} that is uniformly valid over all possible $\cA_0, \cA_1$ and $P$.
Specifically, fixing a error tolerance level $\alpha \in (0,1)$ and an evaluation sample size $n$, an available dataset $\cD_N = \{(X_i,Y_i)\}_{i\in [N]}\iidsim P$ of size $N$, we will consider tests $\hT = \hT(\cA_0, \cA_1,\cD_N)$ 
that satisfy 
\begin{equation} \label{eqn:validity_comp}
	\begin{split}
		\bbP_P \left(\hT(\cD_N, \cA_0, \cA_1) = 1\right) \leq \alpha \text{ for any } \cA_0, \cA_1, P \text{ such that } \Delta_{P,n}(\cA_0, \cA_1) \leq 0.
	\end{split}
\end{equation} 

We will restrict our attention to comparison functions $\psi$ that are bounded---the problem of testing \texttt{CompareAlg} is not meaningful in a setting where the comparison function $\psi$ is unbounded (analogous to what was shown in Theorem~\ref{thm:limits_evaluate_unbounded} for the problem \texttt{EvaluateAlg}).
In order to state our results, we first introduce an analog of $R_P^{\max}$ in this setting. For a distribution $P$ on $\cX \times \cY$, let 
\begin{equation*} 
	\begin{split}
	 \Delta_P^{\max} &= \sup_{f_0, f_1} \Delta_P(f_0, f_1),
	\end{split}
\end{equation*} the largest possible comparison risk of any two functions $f_0$ and $f_1$ for  data drawn from $P$. We are now ready to state the result.
\begin{Theorem}\label{thm:limits_compare}
Assume that either $|\cX| = \infty$ or $|\cY| = \infty$,
and that the comparison function $\psi$ takes values in $[-B, B]$. Let $\hT$ be a black-box test (as in Definition \ref{def:black_box_test_comp}),
and assume that $\hT$ satisfies the assumption-free validity condition~\eqref{eqn:validity_comp}.

Then the power of $\hT$ to compare algorithms is bounded as follows:
for any $\cA_0, \cA_1$ and any $P$ with $\Delta_{P,n}(\cA_0,\cA_1) > 0$, 
\begin{equation} \label{eqn:alg-comp-power-upper-bound}
	\bbP_P \left( \hT(\cA_0, \cA_1,\cD_N) =1 \right) \leq \left[\alpha\left( 1 +\frac{ \Delta_{P,n}(\cA_0, \cA_1) + \frac{B(\alpha^{-1}-1)}{N}}{\Delta_P^{\max} - \frac{B(\alpha^{-1}-1)}{N}} \right)^{N/n} \right] \wedge 1,
\end{equation} provided that $\Delta_P^{\max} > \frac{B(\alpha^{-1}-1)}{N}$ so that the denominator is positive.
\end{Theorem}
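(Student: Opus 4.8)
The plan is to adapt, to the comparison setting, the adversarial-coupling argument behind Theorem~\ref{thm:limits_evaluate}. Fix any $(\cA_0,\cA_1,P)$ with $\Delta_{P,n}(\cA_0,\cA_1)>0$. I will build a second instance $(\cA_0',\cA_1',P')$ that (i) lies on the \emph{null} side of \eqref{eqn:hypothesis_test_comp}, i.e.\ $\Delta_{P',n}(\cA_0',\cA_1')\le 0$, so that validity \eqref{eqn:validity_comp} forces $\bbP_{P'}(\hT(\cA_0',\cA_1',\cD_N)=1)\le\alpha$; and (ii) is indistinguishable from $(\cA_0,\cA_1,P)$ to any black-box test (Definition~\ref{def:black_box_test_comp}), so that this Type I error bound can be pulled back to a bound on $\bbP_P(\hT(\cA_0,\cA_1,\cD_N)=1)$.

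For the construction I would use the infinite-cardinality hypothesis together with the lemma of Appendix~\ref{app:lemma_infinite_cardinality} to select a ``trigger'' point $(x^*,y^*)$ of $P$-probability zero, and set $P' = (1-p)\,P + p\,\delta_{(x^*,y^*)}$ for a perturbation level $p\in(0,1)$. Each algorithm $\cA_l'$ is defined to agree with $\cA_l$ (on the same random seed) on every input data set in which $(x^*,y^*)$ does not appear, while on any input data set containing the trigger, $\cA_0'$ and $\cA_1'$ instead output a fixed pair $(f_0^\star,f_1^\star)$ that, up to $\epsilon\downarrow 0$, attains $\inf_{f_0,f_1}\Delta_{P'}(f_0,f_1)=-\Delta_{P'}^{\max}$, using antisymmetry of $\psi$. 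Finally I would take $p$ so that
\[(1-p)^n \;=\; \frac{\Delta_P^{\max} - \frac{B(\alpha^{-1}-1)}{N}}{\Delta_P^{\max} + \Delta_{P,n}(\cA_0,\cA_1)},\]
which lies in $(0,1)$ thanks to the standing assumptions $\Delta_P^{\max} > \frac{B(\alpha^{-1}-1)}{N}$ and $\Delta_{P,n}(\cA_0,\cA_1)>0$.

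The argument then has three steps. \emph{Null side:} splitting on whether the trigger appears among the $n$ training points (probability $q:=(1-p)^n$), the ``no trigger'' event contributes at most $q\bigl[(1-p)\Delta_{P,n}(\cA_0,\cA_1)+pB\bigr]$ to $\Delta_{P',n}(\cA_0',\cA_1')$ (the $pB$ coming from the fresh test point landing on $(x^*,y^*)$), and the complementary event contributes at most $(1-q)\bigl(-\Delta_{P'}^{\max}\bigr)$, where $\Delta_{P'}^{\max}\ge (1-p)\Delta_P^{\max}-pB$; adding and substituting the chosen $q$ gives $\Delta_{P',n}(\cA_0',\cA_1')\le pB-(1-p)\tfrac{B(\alpha^{-1}-1)}{N}$, which is $\le 0$ exactly when $\tfrac{p}{1-p}\le\tfrac{\alpha^{-1}-1}{N}$, and a short rearrangement of the chosen $q$ together with Bernoulli's inequality $\alpha^{-1/N}\le 1+\tfrac{\alpha^{-1}-1}{N}$ shows this holds in the only case that matters, namely when the claimed bound is below $1$. \emph{Coupling:} realize $\cD_N\sim (P')^{N}$ by drawing $\cD_N\sim P^{N}$ and then independently overwriting each point with the trigger with probability $p$; on the event $E$ that no overwrite occurs (probability $(1-p)^N$) the data sets coincide, and — by the lemma of Appendix~\ref{app:lemma_infinite_cardinality} — a black-box test run with $(\cA_0',\cA_1')$ almost surely never feeds the (unknowable) trigger to $\cA_0'$ or $\cA_1'$, so $\hT(\cA_0',\cA_1',\cD_N)=\hT(\cA_0,\cA_1,\cD_N)$ on $E$; since $\cD_N$ is independent of $E$, this yields $\bbP_{P'}(\hT(\cA_0',\cA_1',\cD_N)=1)\ge (1-p)^N\,\bbP_P(\hT(\cA_0,\cA_1,\cD_N)=1)$, hence $\bbP_P(\hT(\cA_0,\cA_1,\cD_N)=1)\le \alpha/(1-p)^N$. \emph{Arithmetic:} writing $(1-p)^N = q^{N/n}$ and plugging in $q$ turns $\alpha/(1-p)^N$ into the right-hand side of \eqref{eqn:alg-comp-power-upper-bound}, and intersecting with the trivial bound $\bbP_P(\hT=1)\le 1$ closes the case analysis.

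I expect the coupling step to be the crux: because a black-box test may synthesize and adaptively design its own data sets, one must argue that the trigger can be chosen so that the test has probability zero of ever passing it to $\cA_0'$ or $\cA_1'$ — it is precisely here that $|\cX|=\infty$ or $|\cY|=\infty$ enters, and this is the one ingredient genuinely imported from (the proof of) Theorem~\ref{thm:limits_evaluate} through Appendix~\ref{app:lemma_infinite_cardinality}. The only other place needing care is the bookkeeping on the null side: the perturbation must drag the expected comparison below zero, and quantifying this is exactly what produces the additive correction $\tfrac{B(\alpha^{-1}-1)}{N}$ — the analogue here of the inflated threshold $\tilde\tau$ in Theorem~\ref{thm:limits_evaluate} — with the verification resting on Bernoulli's inequality and on restricting attention to the regime where the stated bound is below $1$.
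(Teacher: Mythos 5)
Your route is genuinely different from the paper's at the structural level, though it runs on the same engine. The paper disposes of this theorem in two lines: since $\psi\in[-B,B]$ forces $\beta_q(\cA_0,\cA_1,P,n)\le 2B$ trivially, the statement is the special case $\gamma_q=2B$ of Theorem~\ref{thm:limits_compare_stability_alt}, which in turn is deduced from Theorem~\ref{thm:limits_evaluate_stability} via the paired-algorithm correspondence of Section~\ref{sec:algorithm-comparison-as-evaluation} ($\tilde\cA=(\cA_0,\cA_1)$, $\tilde\ell=B-\psi$). You instead carry out the rare-point adversarial construction directly in the comparison setting; this is essentially the argument the paper uses to prove Theorems~\ref{thm:limits_evaluate_stability} and~\ref{thm:limits_compare_stability}, and your ingredients match it: the trigger point from Lemma~\ref{lem:find_rare_point}, the swapped/worst-case outputs on data sets containing the trigger, the coupling giving $\bbP_P(\hT=1)\le(1-p)^{-N}\,\bbP_{P'}(\hT'=1)$ up to vanishing error, and the algebra converting $(1-p)^{-N}=q^{-N/n}$ into the stated bound, with the additive $\frac{B(\alpha^{-1}-1)}{N}$ emerging from $\alpha^{-1/N}\le 1+\frac{\alpha^{-1}-1}{N}$ exactly as in the paper. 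A small payoff of your direct route is that no stability verification is needed at all, whereas the paper's reduction drags the (here vacuous) stability bookkeeping along; the paper's route, in exchange, avoids redoing the null-side risk computation.

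One step is overstated as written: you assert the trigger $(x^*,y^*)$ can be chosen with $P$-probability zero and such that the black-box test \emph{almost surely} never passes it to $\cA_0'$ or $\cA_1'$. Lemma~\ref{lem:find_rare_point} does not provide this; it only says that for each $\epsilon>0$, all but finitely many points have probability at most $\epsilon$ of appearing anywhere in the run. In general no point with zero appearance probability (or zero $P$-mass) need exist---e.g.\ a countable $\cX\times\cY$ with full-support $P$, and a test whose synthetic data eventually include any given point with positive probability. The repair is the one the paper itself uses: pick $(x_*,y_*)$ with appearance probability $\le\epsilon$ and $P$-mass $p_*\le\epsilon$, define $P'$ through the conditional law $P_*$ (or track the resulting $O(nB\epsilon)$ corrections in your mixture form), replace ``the runs coincide almost surely'' by ``the runs coincide except with probability $\epsilon$,'' and send $\epsilon\to0$ at the end. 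With that amendment, your choice of $p$ at the boundary value of $q$ still works, because in the only nontrivial regime (claimed bound strictly below $1$) the inequality $\frac{p}{1-p}<\frac{\alpha^{-1}-1}{N}$ is strict and $\epsilon$ is chosen after $p$; the paper's device of fixing any $c$ satisfying a strict inequality and then taking the supremum accomplishes the same absorption slightly more cleanly.
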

\noindent Since the term $\frac{B(\alpha^{-1}-1)}{N}$ is negligible as long as $N$ is large, the power is effectively bounded as $\alpha(1 + \frac{\Delta_{P,n}(\cA_0, \cA_1)}{\Delta_P^{\max}})^{N/n}$. In particular, since $\Delta_P^{\max}\geq \Delta_{P,n}(\cA_0, \cA_1)$ by definition, this means that power can only be a constant multiple of $\alpha$ if $N\propto n$. Thus, similar to the interpretation for Theorem \ref{thm:limits_evaluate}, Theorem \ref{thm:limits_compare} suggests that when either $\Delta_{P,n}(\cA_0, \cA_1)$ is close to zero or $N\propto n$, then the power of any universally valid black-box test against any alternative is fairly low. 

For the problem of algorithm evaluation, in the special case of a zero-one loss, we saw in Section~\ref{sec:matching_binomial} that the Binomial test proposed in~\eqref{eqn:binomial-test} 
provided a concrete example of a universally valid test that achieves a power nearly as high as the upper bound given in Theorem~\ref{thm:limits_evaluate}.
Analogously, for the problem of algorithm comparison, it is again possible to construct a similar test (in this case, for special cases such as the comparison
function $\psi$ given in~\eqref{def:psi-probability-comp}) to show that the upper bound on power in Theorem~\ref{thm:limits_compare} is approximately tight.
Since the intuition and construction are similar to the case of algorithm comparison, we do not give the details in this work.

\subsection{Algorithm comparison with a stability assumption}
In this section, we study the role of stability in algorithm comparison: does the problem \texttt{CompareAlg} 
become easier if we assume that $\cA_0$ and $\cA_1$ are both stable algorithms?
As for algorithm evaluation, here for the problem of algorithm comparison, we will draw the same conclusion: aside from extremely high stability  (i.e., the consistency regime), adding a stability assumption does not alter the hardness result for testing \texttt{CompareAlg}.

The following result considers a special case for the comparison function $\psi$: we assume that $\psi$ is simply the difference in losses,
as in~\eqref{def:psi-difference-comp}. 
This theorem holds for both $q=1$ and $q=2$, i.e., for both $\ell_1$-stability and $\ell_2$-stability.
\begin{Theorem} \label{thm:limits_compare_stability}
	Assume that either $|\cX| = \infty$ or $|\cY| = \infty$, 
and that the comparison function is given by $\psi(y',y'',y) = \ell(y',y) - \ell(y'',y)$, where the loss $\ell$ takes values in $[0, B]$. Let $\hT$ be a black-box test (as in Definition \ref{def:black_box_test_comp}),
and assume that $\hT$ satisfies the stability-constrained validity condition,
\begin{equation}\label{eqn:validity_stability-comp}
\begin{split}
	\bbP_P \left( \hT(\cA_0, \cA_1,\cD_N) = 1 \right)\leq \alpha \textnormal{ for all $\cA_0, \cA_1,P$ such that}\\
	 \textnormal{ $\beta_q(\cA_l,P,n)\leq \gamma_q$ for each $l=0,1$ and 
$\Delta_{n,P}(\cA_0, \cA_1) \leq  0$},
\end{split}
\end{equation}  for some stability parameter $\gamma_q \geq 2B/n^{1/q}$.

Then the power of $\hT$ to compare algorithms is bounded as follows:
for any $\cA_0, \cA_1$ and any $P$ with $\beta_q(\cA_l,P,n)\leq \gamma_q$ for $l=0,1$ and $\Delta_{P,n}(\cA_0,\cA_1)> 0$, 
\[
	\bbP_P \left( \hT(\cA_0, \cA_1,\cD_N) =1 \right) \leq \left[\alpha\left( 1 +\frac{ \Delta_{P,n}(\cA_0, \cA_1) +  \frac{B(\alpha^{-1}-1)}{N}}{\Delta_P^{\max} -  \frac{B(\alpha^{-1}-1)}{N}} \right)^{N/n} \right] \wedge 1,
\] provided that $\Delta_P^{\max} > \frac{B(\alpha^{-1}-1)}{N}$ so that the denominator is positive.
\end{Theorem}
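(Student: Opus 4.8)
The plan is to prove Theorem~\ref{thm:limits_compare_stability} along the same lines as Theorem~\ref{thm:limits_compare}, simply carrying through the extra bookkeeping needed to keep every instance inside the stability constraint $\beta_q(\cA_l,P,n)\le\gamma_q$ --- the same way Theorem~\ref{thm:limits_evaluate_stability} is obtained from Theorem~\ref{thm:limits_evaluate}. Fix a black-box test $\hT$ satisfying the stability-constrained validity condition~\eqref{eqn:validity_stability-comp}, and fix a target triple $(\cA_0,\cA_1,P)$ as in the theorem, with $\psi(y',y'',y)=\ell(y',y)-\ell(y'',y)\in[-B,B]$ as in~\eqref{def:psi-difference-comp}, with $\beta_q(\cA_l,P,n)\le\gamma_q$ for $l=0,1$, and with $\Delta_{P,n}(\cA_0,\cA_1)$ in the range specified by the theorem. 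Using the assumption $|\cX|=\infty$ or $|\cY|=\infty$ --- and, more precisely, the infinite-cardinality argument of Appendix~\ref{app:lemma_infinite_cardinality} that already underlies the proofs of Theorems~\ref{thm:limits_evaluate} and~\ref{thm:limits_compare} --- I would select a ``fresh'' measurable region of $\cX\times\cY$ that has $P$-probability zero and that, with probability one, is never touched by any data point the black-box test generates or feeds to $\cA_0$ or $\cA_1$. I would then build a companion triple $(\cA_0',\cA_1',P')$: the distribution $P'$ mixes $P$ with a small weight $\rho$ on the fresh region, while each $\cA_l'$ agrees with $\cA_l$ on every data set avoiding the fresh region (so the two are indistinguishable to a black-box test started from data off that region) but handles fresh data points inertly and outputs prescribed values on the fresh feature region, chosen so that the fresh region drags the comparison quantity onto the null side, $\Delta_{P',n}(\cA_0',\cA_1')\le 0$.

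Everything then comes down to the choice of the mixing weight $\rho$. On the one hand, a fresh point enters the size-$n$ training sample with probability of order $n\rho$, and is drawn as the test point with probability $\rho$, so the shift induced in $\Delta_{P',n}(\cA_0',\cA_1')$ is of order $n\rho$ times a per-point quantity no larger than (the analogue of) $\Delta_P^{\max}$; to meet the null constraint with the precise margin afforded by the error budget $\alpha$, one takes $\rho$ of order $\Delta_{P,n}(\cA_0,\cA_1)/(n\,\Delta_P^{\max})$, up to the $\mathcal{O}(1/N)$ corrections that produce the $\tfrac{B(\alpha^{-1}-1)}{N}$ terms and that arise --- exactly as in Theorem~\ref{thm:limits_compare} --- from averaging over a family of boundary-null instances rather than a single companion. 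On the other hand, because $\cA_l'$ departs from $\cA_l$ only on fresh data, dropping a single training point can change $\big|\ell(\hf_n(X_{n+1}),Y_{n+1})-\ell(\hf_n^{-j}(X_{n+1}),Y_{n+1})\big|$ in Definition~\ref{def:ell_q_stability} only through a fresh-region event of probability of order $\rho$, so $\beta_q(\cA_l',P',n)$ exceeds the original $\beta_q(\cA_l,P,n)$ by at most a quantity of order $B\rho^{1/q}$, which --- since $\rho$ is of order $1/n$ --- is of order $B/n^{1/q}$. This is exactly where the hypothesis $\gamma_q\ge 2B/n^{1/q}$ enters, and it is the right threshold: it leaves just enough slack to absorb the perturbation and certify $\beta_q(\cA_l',P',n)\le\gamma_q$ for $l=0,1$, whereas in the consistency regime $\gamma_q=\mathrm{o}(n^{-1/q})$ there is no such slack (consistent with Proposition~\ref{prop:consistency_regime} and Figure~\ref{fig:phase_transition}). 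Because $\psi$ is a difference of losses, this stability computation is the one from the proof of Theorem~\ref{thm:limits_evaluate_stability}, carried out once for $\cA_0'$ and once for $\cA_1'$.

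The transfer step is then a change-of-measure argument of the same form as in Theorem~\ref{thm:limits_compare}. Running the black-box test $\hT$ of Definition~\ref{def:black_box_test_comp}, let $\cE$ and $\cE'$ denote the joint laws of all its observables --- the data set $\cD_N$, every queried data set $\cD^{(r)}$ and seed $\xi^{(r)}$, every fitted model $\hf_l^{(r)}$, and all internal seeds --- under the inputs $(\cA_0,\cA_1,P)$ and $(\cA_0',\cA_1',P')$ respectively. On the event (which has $\cE$-probability one) that no data point ever lies in the fresh region, the two processes are coupled to coincide, since $\cA_l'=\cA_l$ there and $P'$ restricted off the fresh region is a rescaling of $P$; hence $\mathrm{d}\cE/\mathrm{d}\cE'$ is supported on that event and bounded by $(1-\rho)^{-N}$. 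Therefore
\[
\bbP_P\big(\hT(\cA_0,\cA_1,\cD_N)=1\big)\ \le\ (1-\rho)^{-N}\,\bbP_{P'}\big(\hT(\cA_0',\cA_1',\cD_N)=1\big)\ \le\ (1-\rho)^{-N}\,\alpha,
\]
where the last inequality is the stability-constrained validity~\eqref{eqn:validity_stability-comp} applied to $(\cA_0',\cA_1',P')$ --- legitimate precisely because $\beta_q(\cA_l',P',n)\le\gamma_q$ for $l=0,1$ and $\Delta_{P',n}(\cA_0',\cA_1')\le0$. Writing $(1-\rho)^{-N}=\big((1-\rho)^{-n}\big)^{N/n}$, substituting the calibrated $\rho$, and applying an elementary bound on $(1-\rho)^{-n}$ then yields $(1-\rho)^{-N}\le\big(1+\tfrac{\Delta_{P,n}(\cA_0,\cA_1)+B(\alpha^{-1}-1)/N}{\Delta_P^{\max}-B(\alpha^{-1}-1)/N}\big)^{N/n}$; combined with the trivial bound by $1$, this is the claim (the assumption $\Delta_P^{\max}>\tfrac{B(\alpha^{-1}-1)}{N}$ keeps the denominator positive).

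The main obstacle is the joint calibration of $\rho$ in the second paragraph: the perturbation must be strong enough to certify the null with exactly the margin that produces the $\tfrac{B(\alpha^{-1}-1)}{N}$ and $\Delta_P^{\max}$ terms, yet weak enough that it stays invisible to the black-box test, keeps the Radon--Nikodym factor at most $(1-\rho)^{-N}$, and respects $\beta_q(\cA_l',P',n)\le\gamma_q$ for both $l$; it is this three-way tension that pins $\rho$ to order $1/n$ --- hence the $N/n$ exponent --- and that makes $\gamma_q\ge 2B/n^{1/q}$ precisely the boundary of the impossibility regime. The delicate accounting that replaces the crude constant $B$ by $\Delta_P^{\max}$ in the denominator, via a family of boundary-null instances and a minimax/averaging step, is inherited essentially verbatim from the proof of Theorem~\ref{thm:limits_compare}; the genuinely new ingredient is only the leave-one-out verification that, thanks to the special form $\psi=\ell(\cdot,\cdot)-\ell(\cdot,\cdot)$, each instance used there can be taken $\gamma_q$-stable in the sense of Definition~\ref{def:ell_q_stability}.
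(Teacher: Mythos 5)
Your overall strategy---construct a companion triple $(\cA_0',\cA_1',P')$ concentrated near a data point that the black-box test is unlikely ever to see, transfer via a change of measure with factor $(1-\rho)^{-N}$, and invoke the stability-constrained validity~\eqref{eqn:validity_stability-comp} on the companion---is exactly the paper's route (mirroring the proof of Theorem~\ref{thm:limits_evaluate_stability}). However, the proposal has genuine gaps at the quantitative heart of the argument. The construction that actually produces the claimed constants is never specified: in the paper, when the rare point lies in the training set, $\cA_0'$ returns the \emph{fixed} function $f_1$ and $\cA_1'$ returns $f_0$, where $(f_0,f_1)$ nearly attains $\Delta_P^{\max}$. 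This swap is what makes the conditional comparison $\approx -\Delta_P^{\max}$ (hence $\Delta_P^{\max}$, not $B$, in the denominator), and---because both companions output the same fixed model whenever the rare point survives the leave-one-out deletion---it is also what makes the stability verification close. Your description (``handles fresh data points inertly and outputs prescribed values on the fresh feature region'') does not pin this down; read literally (modify fitted models only on the fresh part of $\cX$), the shift in $\Delta_{P',n}$ would be only $\mathcal{O}(\rho B)$ rather than $\mathcal{O}(n\rho\,\Delta_P^{\max})$, forcing $\rho\gtrsim \Delta_{P,n}/B$ and yielding an exponent of order $N$ instead of $N/n$. Relatedly, you defer ``the delicate accounting that replaces $B$ by $\Delta_P^{\max}$'' to ``a family of boundary-null instances and a minimax/averaging step'' in the proof of Theorem~\ref{thm:limits_compare}; no such averaging step exists anywhere in the paper (Theorem~\ref{thm:limits_compare} is itself a corollary of the stability results), and the $\frac{B(\alpha^{-1}-1)}{N}$ terms arise simply from restricting to the nontrivial case $(1-\rho)^{-N}\alpha\le 1$, i.e.\ $1-\rho\ge\alpha^{1/N}$, and absorbing a $\rho B$ term caused by the test point itself hitting the rare point. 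So the sharp form of the bound is not actually derived in your sketch.

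Two further points. Lemma~\ref{lem:find_rare_point} does not supply a region of $P$-probability zero that the test touches with probability zero (this can fail when $\cX\times\cY$ is countably infinite and every point has positive mass, which the hypotheses allow); it gives, for each $\epsilon>0$, a point touched with probability at most $\epsilon$, so the change-of-measure step must carry $(1-p_*)$ and $+\epsilon$ corrections and pass to the limit $\epsilon\to 0$, as the paper does. And your stability reasoning---an additive perturbation of order $B/n^{1/q}$ ``absorbed by the slack'' from $\gamma_q\ge 2B/n^{1/q}$---fails as stated when $\beta_q(\cA_l,P,n)=\gamma_q$ exactly, since there is then no slack. The correct argument (Step 4 of the paper's proof of Theorem~\ref{thm:limits_evaluate_stability}, reused verbatim here) is a weighted-average one: the original algorithm's contribution enters only on the event that the rare point is absent, so it is scaled by $(1-\rho)^{n+1}$, while conditional on the rare point appearing in $\cD'_{n+1}$ the leave-one-out discrepancy is nonzero only with conditional probability at most $2/(n+1)$, contributing at most $\frac{2B^q}{n+1}\le\frac{n}{n+1}\gamma_q^q$; the convex combination then stays below $\gamma_q^q$ for small $\epsilon$.
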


The bound on power is the same as that in Theorem~\ref{thm:limits_compare}---that is, unless the bound $\gamma_q$ on the $\ell_q$-stability
of the two algorithms is low enough for us to be in the consistency regime, the problem of comparing algorithms suffers from the same hardness result as before.

\subsubsection{An alternative notion of stability}
In Theorem~\ref{thm:limits_compare_stability}, we considered a particular notion of stability: we assumed that \emph{each} algorithm, $\cA_0$ and $\cA_1$, 
is stable, relative to the loss function $\ell$. However, this framework only allows us to handle a particular comparison function $\psi$, namely, a difference of losses as in~\eqref{def:psi-difference-comp}.

An alternative way to incorporate stability, which will allow us to work with an arbitrary comparison function $\psi$, is to define stability directly
on the \emph{pair} of algorithms $\cA_0,\cA_1$. This type of condition was also considered by \citet[Section 4]{bayle2020cross}. 

We define the $\ell_q$-stability of the tuple $(\cA_0, \cA_1, P,n)$ as
 	\begin{multline}\label{eqn:define_stability_psi}
 		\beta_q(\cA_0, \cA_1, P,n)\\:=\max_{j=1,\ldots,n}\bbE\left[ \left| \psi(\hf_{n0}(X_{n+1}), \hf_{n1}(X_{n+1}),Y_{n+1}) - \psi(\hf_{n0}^{- j}(X_{n+1}), \hf_{n1}^{- j}(X_{n+1}),Y_{n+1})\right|^q \right] ^{1/q}.
 	\end{multline} 	 where
\[ 			\hf_{l,n} = \cA_l(\{(X_i,Y_i)\}_{i\in[n]}; \xi),\quad \hf^{- j}_{l,n} = \cA_l(\{(X_i,Y_i)\}_{i\in[n]\backslash\{j\}}; \xi), \quad l = 0,1.
 \]
  Notice that if $\psi$ is a difference of losses, as in~\eqref{def:psi-difference-comp}, then this condition is satisfied if $\cA_0,\cA_1$ are each stable
 with respect to loss $\ell$---indeed we have
 \[\beta_q(\cA_0, \cA_1, P,n)  \leq  \beta_q(\cA_0, P,n) + \beta_q(\cA_1, P,n).\]
However, it is possible to have $\beta_q(\cA_0, \cA_1, P,n)$ small even if the individual algorithms are not stable
 (if the comparison function serves to cancel noise terms, say).

We now state a hardness result under this alternative notion of stability. Again, this theorem holds for either $q=1$ or $q=2$ ($\ell_1$- or $\ell_2$-stability).

\begin{Theorem}\label{thm:limits_compare_stability_alt}
Assume that either $|\cX| = \infty$ or $|\cY| = \infty$,
and that the comparison function $\psi$ takes values in $[-B, B]$. Let $\hT$ be a black-box test (as in Definition \ref{def:black_box_test_comp}),
and assume that $\hT$ satisfies the stability-constrained validity condition,
\begin{equation}\label{eqn:validity_stability-comp2}
\begin{split}
	\bbP_P\left(  \hT(\cA_0, \cA_1,\cD_N) = 1\right)\leq \alpha \textnormal{ for any $\cA_0, \cA_1,P$ such that}\\
	 \textnormal{ $\beta_q(\cA_0, \cA_1,P,n)\leq \gamma_q$ and 
$\Delta_{n,P}(\cA_0, \cA_1) \leq  0$},
\end{split}
\end{equation}  for some stability parameter $\gamma_q \geq 4B/n^{1/q}$.

Then the power of $\hT$ to compare algorithms is bounded as follows:
for any $\cA_0, \cA_1$ and any $P$ with $\beta_q(\cA_0, \cA_1,P,n)\leq \gamma_q$ and $\Delta_{P,n}(\cA_0,\cA_1) > 0$, 
\[
	\bbP_P \left( \hT(\cA_0, \cA_1,\cD_N) =1 \right) \leq \left[\alpha\left( 1 +\frac{ \Delta_{P,n}(\cA_0, \cA_1) +  \frac{B(\alpha^{-1}-1)}{N}}{\Delta_P^{\max} -  \frac{B(\alpha^{-1}-1)}{N}} \right)^{N/n} \right] \wedge 1,
\] provided that $\Delta_P^{\max} > \frac{B(\alpha^{-1}-1)}{N}$ so that the denominator is positive.
\end{Theorem}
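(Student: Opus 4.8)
\emph{Plan.} I would prove this by the same reduction scheme used for Theorem~\ref{thm:limits_compare}, with one extra ingredient. Fix a black-box test $\hT$ obeying the stability-constrained validity condition~\eqref{eqn:validity_stability-comp2}, and fix an alternative instance $(\cA_0,\cA_1,P)$ with $\Delta_{P,n}(\cA_0,\cA_1)>0$ and $\beta_q(\cA_0,\cA_1,P,n)\le\gamma_q$. The goal is to build a companion ``null'' instance $(\cA_0',\cA_1',P')$ such that: (a) $\Delta_{P',n}(\cA_0',\cA_1')\le 0$; (b) $\beta_q(\cA_0',\cA_1',P',n)\le\gamma_q$; and (c) the law of everything a black-box test can observe --- the data set $\cD_N$ together with all the adaptively generated data sets, seeds, and fitted models --- is close, in a quantitative sense, under $(\cA_0',\cA_1',P')$ versus $(\cA_0,\cA_1,P)$. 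Given (a)--(c), the stability-constrained validity~\eqref{eqn:validity_stability-comp2} applies to the null instance and gives $\bbP_{P'}(\hT(\cA_0',\cA_1',\cD_N)=1)\le\alpha$; feeding this through the coupling of (c) then yields the claimed bound on $\bbP_P(\hT(\cA_0,\cA_1,\cD_N)=1)$, with the exponent $N/n$ and the correction $B(\alpha^{-1}-1)/N$ emerging from the bookkeeping of the coupling exactly as in Theorem~\ref{thm:limits_compare}. Relative to that theorem, the one genuinely new requirement is (b): the companion instance must not merely sit in the null, but also inside the stability ball of radius $\gamma_q$.

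\emph{The construction.} The construction underlying Theorem~\ref{thm:limits_compare} plants a ``hidden'' modification in a measure-zero region of $\cX$ (using $|\cX|=\infty$; the case $|\cY|=\infty$ is symmetric): one enlarges $P$ to $P'$ by mixing in a small amount $\epsilon$ of probability on a ``trigger'' region $T\times\cY$ disjoint from $\supp(P)$, and defines $\cA_0',\cA_1'$ to coincide with $\cA_0,\cA_1$ except that, on data sets containing no trigger point, the fitted models are pushed toward an adversarial configuration on the trigger region (so that $\psi$ there takes a value as close to $-B$ as the interaction of $P'$ with the two algorithms permits). A fresh training set of size $n$ is trigger-free with probability $(1-\epsilon)^n$, so this modification drives $\Delta_{P',n}(\cA_0',\cA_1')$ down by roughly $\epsilon$ times the size of that adversarial value; choosing $\epsilon$ as large as the stability budget allows makes the shift exactly enough to reach $\Delta_{P',n}(\cA_0',\cA_1')\le 0$, which is why the ratio controlling the power bound is governed by $\Delta_{P,n}(\cA_0,\cA_1)+B(\alpha^{-1}-1)/N$ over $\Delta_P^{\max}-B(\alpha^{-1}-1)/N$ (the hypothesis $\Delta_P^{\max}>B(\alpha^{-1}-1)/N$ being exactly the statement that there is room to perform this shift).

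\emph{Verifying stability --- the new step, and the source of $4B/n^{1/q}$.} To obtain (b), I would bound $\beta_q(\cA_0',\cA_1',P',n)$ for the constructed instance. Deleting a single training example $j$ alters the pair of fitted models only when that deletion toggles the presence of a trigger point in the training set, or is a deletion from the rare subpopulation of trigger-containing training sets; each such event has probability of order $\epsilon$ for a fixed index $j$, and whenever the models do change the corresponding change in $\psi$ is at most $2B$ since $\psi$ is valued in $[-B,B]$. Hence $\beta_q(\cA_0',\cA_1',P',n)\lesssim 2B\,\epsilon^{1/q}$, so (b) holds provided $\epsilon$ is at most of order $(\gamma_q/2B)^q$. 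Since the shift argument only needs $\epsilon$ of order $n^{-1}$ (this being the boundary of the consistency regime), the requirement $\gamma_q\ge 4B/n^{1/q}$ --- which gives $(\gamma_q/2B)^q\ge 2^q/n\ge 2/n$ --- leaves exactly enough slack to make (a) and (b) simultaneously feasible. The constant $4B$, as opposed to the $2B$ appearing in Theorems~\ref{thm:limits_evaluate_stability} and~\ref{thm:limits_compare_stability}, is simply the price of working with a comparison function $\psi$ valued in an interval of width $2B$ rather than a loss $\ell$ valued in an interval of width $B$.

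\emph{The main obstacle.} The hardest part --- inherited from Theorem~\ref{thm:limits_compare} and not simplified by the stability assumption --- is making the coupling (c) rigorous: one must argue that no black-box test, even one that adaptively designs synthetic data sets and inspects the fitted models as functions on all of $\cX$, can locate (or synthesize a point in) the measure-zero trigger set $T$ with more than negligible probability, so that with high probability it never ``activates'' the hidden difference between $(\cA_0',\cA_1')$ and $(\cA_0,\cA_1)$. This calls for a change-of-measure argument controlling the accumulated likelihood ratio across the (possibly unbounded, data-dependent) rounds of the test, and this accumulation --- spread over the $\approx N/n$ disjoint size-$n$ training slots that $\cD_N$ provides --- is precisely what produces the exponent $N/n$. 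The additional subtlety here, beyond Theorem~\ref{thm:limits_compare}, is only the tension noted above: the planted modification must be weak enough (the trigger rare enough) to keep the instance inside the stability ball, yet still strong enough to reverse the sign of $\Delta_{P',n}$; the stability computation shows these are compatible exactly in the non-consistency regime $\gamma_q\gtrsim n^{-1/q}$, which is what makes the theorem sharp against the consistency boundary of Proposition~\ref{prop:consistency_regime}.
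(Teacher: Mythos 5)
Your route (a direct planted-point construction for the comparison problem, rather than the paper's reduction) is viable in principle, but the construction you describe has a gap that breaks the key coupling step (c). As written, you modify $\cA_0,\cA_1$ on data sets containing \emph{no} trigger point, by altering the fitted models' values on the trigger region. This destroys exactly the property the coupling needs: under Definition~\ref{def:black_box_test_comp}, the functions $g^{(r)},g$ may depend arbitrarily on the returned fitted models as functions on all of $\cX$ (the paper emphasizes that, e.g., a supremum of the loss or of $\psi(\hf_0(x),\hf_1(x),y)$ over all $(x,y)\in\cX\times\cY$ is computable from a single call), so under your modification the runs of $\hT$ on $(\cA_0',\cA_1',P')$ and on $(\cA_0,\cA_1,P)$ are \emph{not} almost surely equal on the event that no trigger point appears in any generated data set---the returned models differ as functions at every round, and no change-of-measure argument over the rounds can repair this. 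The construction that works (used by the paper for Theorems~\ref{thm:limits_evaluate_stability} and~\ref{thm:limits_compare_stability}) plants the modification the other way around: $\cA_l'(\cD;\xi)=\cA_l(\cD;\xi)$ whenever the planted point $(x_*,y_*)\notin\cD$, and only when $(x_*,y_*)\in\cD$ do the modified algorithms return the swapped near-extremal pair ($\cA_0'$ returns $f_1$, $\cA_1'$ returns $f_0$, where $\Delta_P(f_0,f_1)\geq\Delta_P^{\max}-\epsilon$); then on the high-probability event the fitted models coincide exactly, which is what makes the two runs indistinguishable. Your own stability analysis implicitly assumes this training-set-triggered construction, so the proposal is internally inconsistent on this point.

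The quantitative mechanism you describe also cannot produce the stated bound: pushing $\psi$ toward $-B$ only on a trigger region of test-point mass $\epsilon$ shifts $\Delta_{P',n}$ down by roughly $\epsilon\,(B+\Delta_{P,n})$, so with $\epsilon=O(n^{-1})$ (which your stability budget demands) you cannot reverse the sign of a constant-order $\Delta_{P,n}$, and you get neither the denominator $\Delta_P^{\max}$ nor the exponent $N/n$. In the correct construction these arise because a size-$n$ training set contains the planted point with probability $1-(1-c)^n$, the triggered contribution is $\approx-\Delta_P^{\max}$ evaluated on the bulk distribution $P_*$ (not on the trigger region), and the coupling cost over $\cD_N$ is $(1-c)^{-N}$, giving the $(\cdot)^{N/n}$ rate; the $B(\alpha^{-1}-1)/N$ terms come from the threshold adjustment $\tilde\tau$. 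Note also that the paper does not prove this theorem by a fresh construction at all: it reduces it to Theorem~\ref{thm:limits_evaluate_stability} via the paired algorithm $\tilde\cA=(\cA_0,\cA_1)$ with loss $\tilde\ell((y',y''),y)=B-\psi(y',y'',y)\in[0,2B]$ (Section~\ref{sec:algorithm-comparison-as-evaluation}), for which $\tilde R_{P,n}(\tilde\cA)=B-\Delta_{P,n}(\cA_0,\cA_1)$, $\tilde R_P^{\max}=B+\Delta_P^{\max}$, and the $\ell_q$-stability of $\tilde\cA$ under $\tilde\ell$ equals $\beta_q(\cA_0,\cA_1,P,n)$; applying that theorem with $2B$ in place of $B$ and $\tau=B$ gives the result, and the threshold $4B/n^{1/q}=2\cdot(2B)/n^{1/q}$ falls out automatically---so your intuition for the constant is right, even though your $\epsilon\lesssim(\gamma_q/2B)^q$ bookkeeping is not the computation that certifies stability. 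Finally, a framing circularity: you invoke ``the reduction scheme of Theorem~\ref{thm:limits_compare},'' but in the paper that theorem is itself a corollary of the present one.
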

\noindent Again, we have the same bound on power as before, even under an assumption of stability. 

\subsection{Relating algorithm comparison and algorithm evaluation} \label{sec:algorithm-comparison-as-evaluation}
In this paper, we have considered two different problems regarding algorithms: comparing the risk of a single algorithm $\cA$ against a fixed threshold $\tau$, as in
\texttt{EvaluateAlg}, or comparing the risks of two different algorithms $\cA_0,\cA_1$ against each other, as in \texttt{CompareAlg}.
While these problems are clearly related, the hypotheses being tested appear to be distinct, but in this section
we will show that the two testing problems are actually the same---in particular, the hypothesis test~\eqref{eqn:hypothesis_test_comp}
for algorithm comparison can be viewed as a special case of the test~\eqref{eqn:hypothesis_test} for algorithm evaluation.

Specifically, if each algorithm $\cA_0,\cA_1$ returns fitted functions that are maps from $\cX$ to $\hcY$, we will now consider
the \emph{paired} algorithm $\tilde\cA = (\cA_0,\cA_1)$ that returns a function mapping from $\cX$ to $\hcY\times\hcY$. Namely,
for a data set $\cD$ and random seed $\xi\in[0,1]$, we define
$\tilde\cA(\cD;\xi) = \tilde f$, where $\tilde{f} : \cX \rightarrow \hcY\times\hcY$ is the map given by
\begin{equation} \label{eq:def-tilde-f}
	\tilde{f}(x) = (\hf_0(x), \hf_1(x)) \textnormal{ \ where \ } \hf_l = \cA_l(\cD;\xi), \ l=0,1.
\end{equation}
Next, we define 
\[\begin{array}{cccc}\tilde\ell: & (\hcY\times \hcY) \times \cY& \rightarrow &\bbR_+\\ 
& \rotatebox[origin=c]{90}{$\in$} & &  \rotatebox[origin=c]{90}{$\in$}\\
& \big( (y',y''), y\big) & \mapsto & B -  \psi(y',y'',y)
\end{array}\]
as our new loss function. In particular, note that $\tilde\ell$ takes values in $[0,2B]$, since $\psi$ takes values in $[-B,B]$.

Now we will see how the risk of the new algorithm $\tilde\cA$ relates to the comparison problem for the pair $\cA_0,\cA_1$.
 Let $\tilde R_{P,n}$ denote the risk of $\tilde{\cA}$ with respect to the new loss $\tilde\ell$, for data sampled from $P$.
For a fixed function $f: \cX \rightarrow \hcY\times\hcY$, let $f_0$ and $f_1$ be the first and second components of $f$, i.e.,
$f(x) = (f_0(x),f_1(x))$. We have
\[\tilde R_P(f) = \bbE_P[ \tilde\ell(f(X),Y)]
= \bbE_P[ B - \psi\big(f_0(X),f_1(X), Y)] = B -  \Delta_P(f_0,f_1).\]
Therefore, defining $\tilde f$, $\hf_0$, and $\hf_1$ as in \eqref{eq:def-tilde-f},
\[\tilde R_{P,n}(\tilde\cA) = \bbE[ \tilde R_P(\tilde f)] = \bbE[  B - \Delta_P(\hf_0,\hf_1) ] = B -  \Delta_{P,n}(\cA_0,\cA_1).\]
In other words, the comparison risk $\Delta_{P,n}(\cA_0,\cA_1)$ for the pair $\cA_0,\cA_1$
is equivalent (up to a transformation) to the evaluation risk $\tilde R_{P,n}(\tilde\cA)$. This means that, by replacing the space $\hcY$ with $\hcY\times \hcY$
(and $B$ with $2B$),
we can rewrite the algorithm comparison problem as an algorithm evaluation problem: 
the testing problem \eqref{eqn:hypothesis_test_comp} for \texttt{CompareAlg} is the same as testing
\begin{equation*}
	H_0: R_{P,n}(\tilde{\cA}) \geq  B  \quad \text{v.s.} \quad H_1: R_{P,n}(\tilde{\cA}) < B,
\end{equation*}
which is equivalent to the hypothesis test~\eqref{eqn:hypothesis_test} for \texttt{EvaluateAlg} if we take threshold $\tau = B$.
Indeed, as we will see in Appendix C.3, 
the result in Theorem \ref{thm:limits_compare_stability_alt} can be easily obtained from the proof of Theorem \ref{thm:limits_evaluate_stability} due to this correspondence.

\begin{Remark} \label{rem:model-comparison-unbounded}
	When the range of the comparison function $\psi$ is unbounded, i.e., $B = \infty$, then similar to the discussion in Section \ref{sec:unbounded-loss}, we would also expect that assumption-free \texttt{CompareModel} will be impossible. But luckily that in \texttt{CompareModel} or \texttt{CompareAlg}, we have the freedom to choose the comparison function $\psi$. If we choose $\psi$ to be the one in \eqref{def:psi-probability-comp}, which compares the frequency on which model or algorithm is better, then we have $B = 1$. In that case, we would still have that \texttt{CompareAlg} is much harder than \texttt{CompareModel}.
\end{Remark}

\section{Discussion} \label{sec:discussion}
In this paper, we consider the tasks of algorithm/model evaluation and algorithm/model comparison.
Our main results characterize the difference between these questions and the limitations of black-box tests to perform inference on algorithmic questions, showing that
it is impossible for a universally valid test to
have high power under any alternative hypothesis, unless the number of available data is far higher than the target sample size. 
Our results also show that an assumption of algorithmic stability is not sufficient to circumvent this hardness result, unless in the consistency regime.

Assessing the quality of models and modeling algorithms  are common tasks in many fields, we believe that our results are
useful in clarifying the gap between questions that assess an algorithm (\texttt{EvaluateAlg} and \texttt{CompareAlg}), versus
questions that assess a particular fitted model produced by the algorithm (\texttt{EvaluateModel} and \texttt{CompareModel}).
While our results establish the hardness of these first two problems, even under a certain stability assumption,
an important open question for further research is whether there are alternative mild assumptions (on the algorithm and/or on the data)
that would enable powerful inference on \texttt{EvaluateAlg} and \texttt{CompareAlg} even under sample size constraints. There is much room to explore in this direction; here we mention two lines of work, which might provide us with some inspiration. These two lines work aim to achieve distribution-free inference for the algorithm risk in slightly different settings. The first line is conformal risk control. Translate conformal risk control in our context, it assumes we have a class of algorithms parameterized by $\lambda \in \bbR$, and the loss function is monotone in $\lambda$. Then the main result of conformal risk control \citep{angelopoulosconformal} shows that we can {\it select} one algorithm, i.e., $\hat{\lambda}$, based on our data so that the selected algorithm has risk control without any assumption on the data distribution. Note that compared to our target, conformal risk control relaxes the goal by finding one particular algorithm whose risk is controlled, and it has some requirement on the class of algorithms. The second line of work is about distribution-free excess risk control. Since in many scenarios, the algorithm risk is relative to the problem difficulty, it is natural to compare the target predictor $\hf_n$ with the best possible risk achievable via some reference class of functions, say $\mathcal{F}$. For some special $\mathcal{F}$, it has been shown in \cite{mourtada2022distribution,mourtada2022improper} that we can achieve almost distribution-free upper bound for $R_{P,n}(\cA) - \inf_{f \in \mathcal{F}} R_P(f)$---the excess risk of the algorithm $\cA$ with respect to $\mathcal{F}$. Compared to our setting, these works choose to relax the target by considering one algorithm, and the inference target is changed to the excess risk.

\subsection*{Acknowledgements}
We thank the Editor, the Associated Editor, and two anonymous referees for their helpful suggestions, which helped improve the presentation and quality of this paper. We especially thank one of the reviewers for pointing out the hardness of \texttt{EvaluateModel} when the loss is unbounded, which motivates the discussion in Section \ref{sec:unbounded-loss}. The authors would also like to thank Lester Mackey for helpful comments on the paper. R.F.B.\ was supported by the Office of Naval Research via grant N00014-20-1-2337
and by the National Science Foundation via grant DMS-2023109. 

\bibliographystyle{apalike}
\bibliography{reference.bib}

\appendix

\section{Inference for Median Risk}\label{app:median-risk}
Let us define a median version of the same testing problem as in \eqref{eqn:hypothesis_test} as:
\begin{equation}\label{eqn:hypothesis_test-median}
	\texttt{EvaluateAlg}: H_0:\, \textnormal{median}(\ell(\cA(\cD_n;\xi)(X), Y) ) \geq \tau \textnormal{ \ versus \ } H_1: \,\textnormal{median}(\ell(\cA(\cD_n;\xi)(X), Y) ) <\tau,
\end{equation}where $\ell(\cA(\cD_n;\xi)(X), Y)$ means that we fit the algorithm $\cA$ on $n$ i.i.d. data $\cD_n$ drawn from $P$ with random seed $\xi$ and then evaluate the loss of the fitted model on the new independent data $(X,Y) \sim P$. Similarly, we can consider the model risk testing problem as
\begin{equation}\label{eqn:hypothesis_test2-median}
	\texttt{EvaluateModel}: H_0:\, \textnormal{median}(\ell(\hf_n(X), Y)) \geq \tau \textnormal{ \ versus \ } H_1: \,\textnormal{median}(\ell(\hf_n(X), Y)) <\tau,
\end{equation} where $\hf_n$ is a fixed model and the only randomness in computing the median is $(X,Y)$. Then, even when the loss function $\ell$ is unbounded, we can verify that solving \eqref{eqn:hypothesis_test2-median} would be much easier than \eqref{eqn:hypothesis_test-median}. To see why this is the case, let us denote $\widetilde{\ell}(\hf_n(X), Y) = \indi( \ell(\hf_n(X), Y) \geq \tau )$, $\widetilde{R}_P(\hf_n) = \bbE_P[\widetilde{\ell}(\hf_n(X), Y)| \hf_n  ]$ and $\widetilde{R}_{P,n}(\cA) = \bbE[\widetilde{R}_P(\hf_n)]$ where $\hf_n =\cA(\cD_n; \xi)$. Then we note the testing problems in \eqref{eqn:hypothesis_test-median} and \eqref{eqn:hypothesis_test2-median} are the same as
\begin{equation}\label{eqn:hypothesis_test-median3}
\begin{split}
	\texttt{EvaluateAlg}: H_0:\, \widetilde{R}_{P,n}(\cA) \geq 0.5 \textnormal{ \ versus \ } H_1: \,\widetilde{R}_{P,n}(\cA) < 0.5,\\
	\texttt{EvaluateModel}: H_0:\, \widetilde{R}_P(\hf_n) \geq 0.5 \textnormal{ \ versus \ } H_1: \,\widetilde{R}_P(\hf_n) < 0.5.
\end{split}
\end{equation}
For the new testing problem in \eqref{eqn:hypothesis_test-median3}, the essential loss function is a $0$-$1$ loss, then $\texttt{EvaluateModel}$ is easy as long as $N - n$ is large, while the hardness result for \texttt{EvaluateAlg} in Theorem \ref{thm:limits_evaluate} would still apply.

\section{Proofs: hardness results for algorithm evaluation}

\subsection{Key lemma: the role of infinite cardinality}\label{app:lemma_infinite_cardinality}
Before proving our hardness results for algorithm evaluation, we present a lemma that will play a key role in proving all our main results.
This lemma illustrates the reason why we need to assume that $\cX\times\cY$ has infinite cardinality.
\begin{Lemma}\label{lem:find_rare_point}
Assume that either $|\cX| = \infty$ or $|\cY| = \infty$.
Let $\cA$ be any algorithm, let $P$ be any distribution on $\cX\times\cY$, and let $\hT$ be any black-box test (as in Definition~\ref{def:black-box-test}).
Let random variables $\cD_N, \cD^{(1)},\dots,\cD^{(\hr)}$ denote the data sets 
that appear when sampling data from $P$ and running $\hT$ on algorithm $\cA$.
Then, for any $\epsilon>0$, there are only finitely many points $(x,y)\in\cX\times\cY$ such that\footnote{Here we interpret $(x,y)\in\cD$, for a data set $\cD \in(\cX\times\cY)^M$, to mean
that at least one of the $M$ data points in $\cD$ is equal to $(x,y)$.}
\[\bbP((x,y) \in \cD_N\cup \cD^{(1)}\cup \dots \cup \cD^{(\hr)} )  \geq \epsilon.\]
The same result also holds for any algorithms $\cA_0,\cA_1$, with $\hT$ now denoting a black-box test for comparing the algorithms (as in Definition~\ref{def:black_box_test_comp}).
\end{Lemma}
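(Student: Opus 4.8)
The plan is to isolate a single elementary measure-theoretic fact and then recognize that the lemma is an instance of it: if $E$ is a random subset of $\cX\times\cY$ that is \emph{almost surely finite}, then for every $\epsilon>0$ the set $\{(x,y)\in\cX\times\cY:\bbP((x,y)\in E)\geq\epsilon\}$ is finite. Taking $E$ to be the (random) set of distinct data points appearing anywhere in $\cD_N,\cD^{(1)},\dots,\cD^{(\hr)}$ then yields exactly the claim; note that with this formulation there is no need to treat the sample $\cD_N$ (whose atoms are controlled by $P$) and the synthetically generated sets $\cD^{(r)}$ separately.

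First I would verify that this $E$ is almost surely finite. Each $\cD^{(r)}$ is a finite data set, since by definition an algorithm $\cA$ only ever receives inputs in $\cup_{n\geq 0}(\cX\times\cY)^n$; and the number of rounds $\hr$ is finite almost surely, because the test terminates (it must, in order to return $\hT(\cA,\cD_N)\in\{0,1\}$). Hence $E = \cD_N\cup\bigcup_{r=1}^{\hr}\cD^{(r)}$ is a union of $\hr+1<\infty$ finite sets and is therefore finite almost surely. The identical reasoning covers the two-algorithm setting of Definition~\ref{def:black_box_test_comp}: the presence of a second fitted model at each stage does not change the fact that the generated data sets $\cD^{(r)}$ are finite and $\hr$ is almost surely finite, so $E$ is again almost surely finite.

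Next I would prove the measure-theoretic fact by contradiction. If the set in question were infinite, choose a sequence of distinct points $(x_1,y_1),(x_2,y_2),\dots$ each with $\bbP((x_k,y_k)\in E)\geq\epsilon$. For every realization, $E$ is a finite subset of $\cX\times\cY$, so it can contain $(x_k,y_k)$ for only finitely many $k$; hence $\indi_{(x_k,y_k)\in E}\to 0$ almost surely as $k\to\infty$. Since these indicators are bounded by $1$, dominated convergence gives $\bbP((x_k,y_k)\in E) = \bbE\!\left[\indi_{(x_k,y_k)\in E}\right]\to 0$, contradicting $\bbP((x_k,y_k)\in E)\geq\epsilon$ for all $k$. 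This proves the fact, and hence the lemma.

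I do not expect a genuine obstacle; the only step that needs care is establishing that $E$ is almost surely finite, which is where one uses that $\cA$ — and hence the black-box test built around it — manipulates only finite data sets and runs for finitely many rounds. It is worth stressing what is \emph{not} needed: we never require the expected total amount of generated data $\bbE\big[\,|\cD_N| + \sum_{r\leq\hr}|\cD^{(r)}|\,\big]$ to be finite (a black-box test is in principle free to generate arbitrarily large data sets); almost-sure finiteness of $E$ alone drives the dominated-convergence argument, and this is precisely the point at which the infinite cardinality of $\cX\times\cY$ makes the statement nonvacuous.
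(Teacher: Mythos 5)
Your proof is correct, but it takes a different route from the paper's. You reduce the lemma to a soft measure-theoretic fact---if $E$ is an almost surely finite random subset of $\cX\times\cY$, then for distinct points $(x_k,y_k)$ the indicators $\indi_{(x_k,y_k)\in E}$ tend to $0$ almost surely, so by bounded convergence only finitely many points can have $\bbP((x,y)\in E)\geq\epsilon$---and the only structural input from the black-box definition is that $\hr$ is finite and each $\cD^{(r)}$ is a finite data set. The paper instead runs a quantitative truncation-plus-counting argument: it chooses constants $C_1,C_2$ so that with probability at least $1-\epsilon/2$ the test uses at most $C_1$ rounds with data sets of size at most $C_2$, and then a union-of-indicators counting bound shows there are at most $2(N+C_1C_2)/\epsilon$ points with $\bbP((x,y)\in E)\geq\epsilon$. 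Your argument is shorter and avoids the truncation bookkeeping, at the cost of being purely qualitative: it certifies finiteness but gives no explicit bound on the number of ``frequent'' points, and it does not expose the feature the paper leans on in its remark after the proof, namely that replacing $C_1,C_2$ by a computational budget yields an analogous statement for finite $\cX\times\cY$ under bounded numbers of calls to $\cA$. For the lemma as stated (and its downstream use, which only needs the existence of a single rare point $(x_*,y_*)$), your version is fully adequate; both proofs share the same implicit measurability assumptions.
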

\noindent  That is,  we are considering the probability of the data point $(x,y)$ appearing
anywhere in the run of the black-box test---whether in the original data $\cD_N$ sampled from $P$, 
or in the synthetic data sets $\cD^{(r)}$ generated during the test.

\begin{proof}[Proof of Lemma~\ref{lem:find_rare_point}]
First, by definition of a black-box test, $\hT$ runs for a finite number of rounds $\hr$ (note that $\hr$ is a random variable, e.g., it may
depend on the outcomes of the previous rounds, and can be arbitrarily large, but must be finite depending on $P$ and $\cA$). Find some value $C_1$ such that $\bbP(\hr\leq C_1)\geq 1- \epsilon/4$.
Next, for each $r=1,\dots,C_1$, the data set $\cD^{(r)}$ has a finite size $N_r$ (again, $N_r$ is a random variable and may be 
arbitrarily large, but must be finite depending on $P$ and $\cA$). For each $r=1,\dots,C_1$, find some $C_{2,r}$ such that $\bbP(N_r \leq C_{2,r})\geq 1- \epsilon/4C_1$. 
Let $C_2 = \max_{r=1,\dots,C_1} C_{2,r}$. 
Therefore, with probability at least $1-\epsilon/2$, $\hT$ runs for at most $C_1$ many rounds,
and each round generates a data set $\cD^{(r)}$ with at most $C_2$ many data points. 
Let $\cE$ denote this event, so that we have $\bbP(\cE)\geq 1-\epsilon/2$. On the event $\cE$,
 then, the combined data set $\cD_N\cup \cD^{(1)}\cup \dots \cup \cD^{(\hr)}$ contains at most $N+C_1C_2$ many data points.
Now consider a data point $(x,y)$ for which 
\begin{equation} \label{ineq:lemma1-ineq1}
	\bbP((x,y) \in \cD_N\cup \cD^{(1)}\cup \dots \cup \cD^{(\hr)} ) \geq \epsilon.
\end{equation}
For any such $(x,y)$, we have
\begin{equation} \label{ineq:lemma1-ineq2}
	\bbP(\textnormal{$\cE$ holds, and }(x,y) \in \cD_N\cup \cD^{(1)}\cup \dots \cup \cD^{(\hr)} )  \geq
\epsilon - \bbP(\cE^c) \geq 
 \epsilon/2 .
\end{equation}
 Suppose there are $M$ many different $(x_i, y_i)$ pairs satisfying \eqref{ineq:lemma1-ineq1}, then 
\begin{equation*}
	\begin{split}
		N + C_1C_2 &\geq \bbE[ \cD_N\cup \cD^{(1)}\cup \dots \cup \cD^{(\hr)} \cdot \indi_{\cE}]\\
		& \geq \bbE \left[ (\sum_{i=1}^M \indi_{(x_i, y_i) \in \cD_N\cup \cD^{(1)}\cup \dots \cup \cD^{(\hr)}} ) \cdot \indi_{\cE} \right]\\
		& = \sum_{i=1}^M \bbP \left( \textnormal{$\cE$ holds, and }(x_i,y_i) \in \cD_N\cup \cD^{(1)}\cup \dots \cup \cD^{(\hr)}  \right) \\
		& \overset{ \eqref{ineq:lemma1-ineq2} }\geq M \epsilon/2, 
	\end{split}
\end{equation*} so we have $M \leq \frac{N+C_1C_2}{\epsilon/2}$, i.e., the inequality \eqref{ineq:lemma1-ineq1} can only hold for at most $\frac{N+C_1C_2}{\epsilon/2}$ many data points.
\end{proof}
This result reveals how the assumption of infinite cardinality will be used---if only finitely many points $(x,y)$
have probability $\geq \epsilon$ of appearing at any point during the black-box test, then we can always find some other data point $(x',y')\in\cX\times\cY$
whose probability of appearing during the test is $<\epsilon$.
Moreover, the proof also suggests how we might
derive analogous results for \emph{finite} $\cX\times\cY$ under a computational constraint.
Specifically, if $\hT$ is only allowed to call $\cA$ a bounded number of times, and if each run of $\cA$ is only allowed to input
a data set of some bounded size, then these constraints can be used in place of the (arbitrarily large) values $C_1,C_2$ in the proof above.

\subsection{Proof of Theorem~\ref{thm:limits_evaluate}}
Theorem~\ref{thm:limits_evaluate} can be viewed as a special case of 
Theorem~\ref{thm:limits_evaluate_stability}. 
Specifically, since the loss takes values in $[0,B]$, 
it holds trivially for any $(\cA,P,n)$ that $\beta_q(\cA,P,n) \leq B$.
Thus, the result of Theorem~\ref{thm:limits_evaluate} is implied by
applying Theorem~\ref{thm:limits_evaluate_stability} with stability parameter $\gamma_q = B$.

\subsection{Proof of Theorem~\ref{thm:limits_evaluate_unbounded}}
Fix any $\cA$ and any $P$. 
Let $\epsilon,\delta>0$ be fixed and arbitrarily small constants.

Define a constant
\[C = \frac{\tau}{\left[1 - \left(1 - \delta / 2\right)^n\right]\cdot \delta/2}.\]
Since $\sup_{\hy,y}\ell(\hy,y)=\infty$, we can find some $\hy_0,y_0$ such that $\ell(\hy_0,y_0) \geq C$.
And, by Lemma~\ref{lem:find_rare_point}, we can find some $(x_*,y_*)\in\cX\times\cY$ such that
\[\bbP((x_*,y_*) \in \cD_N\cup \cD^{(1)}\cup \dots \cup \cD^{(\hr)} ) \leq  \epsilon,\]
where $\cD_N$ denotes an i.i.d.\ sample from $P$, and $\cD^{(1)},\dots,\cD^{(\hr)}$ denote
the data sets constructed when running $\hT(\cA,\cD_N)$.
Let $\cE_*$ be the event that $(x_*,y_*) \not\in \cD_N\cup \cD^{(1)}\cup \dots \cup \cD^{(\hr)}$,
so that $\bbP(\cE_*)\geq 1-\epsilon$.

Next,  we define a new distribution
\[P' = (1 - \delta ) \cdot P + \delta/2 \cdot (P_X\times \delta_{y_0}) + \delta/2 \cdot \delta_{(x_*,y_*)},\]
and  a new algorithm $\cA'$ as follows: for any $m\geq 0$, any $\cD\in(\cX\times\cY)^m$,
and any $\xi\in[0,1]$,
\[\cA'(\cD;\xi) = \begin{cases} \cA(\cD;\xi), & \textnormal{ if }(x_*,y_*)\not\in \cD,\\
f(x)\equiv \hy_0, & \textnormal{ if }(x_*,y_*)\in \cD.\end{cases}\]
That is, if $\cD$ contains the data point $(x_*,y_*)$, then $\cA'$ returns the constant function that maps any $x$ to the value $\hy_0$.
Now let $\cD_N$ be drawn i.i.d.\ from $P$, and let $\cD_N'$ be drawn i.i.d.\ from $P'$. By definition of $P'$,
we can construct a coupling between these two data sets such that $\bbP(\cD_N = \cD_N' \mid \cD_N) \geq (1 - \delta)^N$. 
Therefore, this means that
\[\bbP(\hT(\cA',\cD_N') = 1) \geq (1-\delta)^N \bbP(\hT(\cA',\cD_N) = 1) .\]

Next, observe that on the event $\cE_*$, we have $\hT(\cA',\cD_N) = \hT(\cA,\cD_N)$, almost surely.
Informally, this is because when running the black-box text $\hT$,
if we initialize at the same data set $\cD_N$, and we never observe the data point $(x_*,y_*)$,
then $\cA$ and $\cA'$ are returning the same fitted models at every iteration, and so 
all iterations of the tests $\hT(\cA,\cD_N)$ and $\hT(\cA',\cD_N)$ are identical to each other.
(A more formal proof of this type of statement can be found in the proof of \citet[Theorem 2]{kim2021black}.)
In particular, we have
\[\bbP(\hT(\cA',\cD_N) = 1) \geq \bbP(\hT(\cA,\cD_N) = 1) - \bbP(\cE_*^c) \geq  \bbP(\hT(\cA,\cD_N) = 1) - \epsilon.\]
Combining our calculations, then,
\[ \bbP(\hT(\cA,\cD_N) = 1) \leq (1-\delta)^{-N}\bbP(\hT(\cA',\cD_N') = 1) + \epsilon.\]

To complete the proof, we
 now calculate the risk of the new algorithm and distribution: writing $\cD_n'$ to denote a data set drawn i.i.d.\ from $P'$, and $(X_{n+1}',Y_{n+1}')\sim P'$
as the test point,
\begin{align*}
R_{P',n}(\cA')
& = \bbE_{P'}\left[ \ell\big( [\cA'(\cD'_n;\xi)](X_{n+1}'), Y_{n+1}'\big)\right]\\
&\geq  \bbE_{P'}\left[ \ell\big( [\cA'(\cD'_n;\xi)](X_{n+1}'), Y_{n+1}'\big)\cdot\indi_{(x_*,y_*)\in\cD'_n}\cdot\indi_{Y_{n+1}'=y_0}\right]\\
&=  \bbE_{P'}\left[ \ell(\hy_0,y_0)\cdot\indi_{(x_*,y_*)\in\cD'_n}\cdot\indi_{Y_{n+1}'=y_0}\right]\\
&\geq C\cdot   \bbE_{P'}\left[ \indi_{(x_*,y_*)\in\cD'_n}\cdot\indi_{Y_{n+1}'=y_0}\right]\\
&\geq C\cdot \left[1 - \left(1 - \delta / 2\right)^n\right] \cdot \delta/2\\
& = \tau,
\end{align*}
where the last step holds by definition of $C$.
Therefore, by validity of $\hT$~\eqref{eqn:validity}, we must have
\[\bbP(\hT(\cA',\cD_N')=1)\leq \alpha \ \Longrightarrow \ \bbP(\hT(\cA,\cD_N) =1 ) \leq (1-\delta)^{-N}\alpha + \epsilon. \]
Since $\epsilon,\delta>0$ can be taken to be arbitrarily small (while $N$ is being treated as fixed),
this proves the theorem.

\subsection{Proof of Theorem~\ref{thm:limits_evaluate_stability}}
The general proof idea is the following: for any $\cA$ and $P$ such that $\beta_q(\cA, P, n) \leq \gamma_q$ and $R_{P,n}(\cA) < \tau$, we can construct a new algorithm $\cA'$ and a new distribution $P'$ such that $\beta_q(\cA', P', n) \leq \gamma_q$ and $R_{P',n}(\cA') \geq \tau$. Moreover, the constructed $(\cA', P')$ is sufficiently similar to $(\cA, P)$ so that they are difficult to distinguish using limited $N$ i.i.d. data points. 

To make this idea precise, we begin by fixing any constant $c\in(0,1)$ with
\begin{equation}\label{eqn:assume_c}
(1-c)^n \cdot R_{P,n}(\cA)
+ \left[1-(1-c)^n\right]\cdot  R_P^{\max}  > \tilde\tau.\end{equation}
We will show that
\begin{equation}\label{eqn:prove_with_c}\bbP(\hT(\cA,\cD_N)=1)
\leq (1-c)^{-N} \alpha.\end{equation}
In particular, since condition~\eqref{eqn:assume_c} holds for any $c$ satisfying
\[ (1-c)< \left(\frac{R_P^{\max} - \tilde\tau}{R_P^{\max} - R_{P,n}(\cA)}\right)^{1/n},\]
this implies
\[\bbP(\hT(\cA,\cD_N) = 1) \leq \alpha\left(\frac{ R_{P}^{\max} - R_{P,n}(\cA) }{R_{P}^{\max} - \tilde\tau }\right)^{N/n} = \alpha\left(1 + \frac{ \tilde\tau - R_{P,n}(\cA) }{R_{P}^{\max} - \tilde\tau } \right)^{N/n} ,\]
as claimed. (Of course, $\bbP(\hT(\cA,\cD_N)=1)\leq 1$ holds trivially.)

We now prove~\eqref{eqn:prove_with_c} holds for any fixed $c$ satisfying~\eqref{eqn:assume_c}.

\subsubsection{Step 1: some preliminaries}
First, if  $1-c < (1/\alpha)^{-1/N}$, then no matter whether \eqref{eqn:assume_c} holds or not, the desired bound~\eqref{eqn:prove_with_c} holds trivially, since the right-hand side of~\eqref{eqn:prove_with_c} is $>1$ in this case.
Therefore we can restrict our attention to the nontrivial case where $1-c\geq (1/\alpha)^{-1/N}$. 
This implies that 
\[\tilde\tau = \tau\left(1 + \frac{\alpha^{-1}-1}{N} \right)  \geq  \tau\cdot (1/\alpha)^{1/N}\geq \frac{\tau}{1-c}.\]
Combining this with~\eqref{eqn:assume_c} then yields
\begin{equation}\label{eqn:assume_c_2}(1-c) \left[(1-c)^n \cdot R_{P,n}(\cA)
+ \left[1-(1-c)^n\right]\cdot  R_P^{\max}\right]
>  \tau.\end{equation}

From this point on, 
we let $\epsilon\in (0,c)$ denote an arbitrarily small constant.
By definition of $R_P^{\max}$, we can find some function $f_*:\cX\rightarrow\hcY$ such that
\begin{equation}\label{eqn:star1}R_P(f_*)\geq R_P^{\max} - \epsilon.\end{equation}
And, by Lemma~\ref{lem:find_rare_point}, we can find some $(x_*,y_*)\in\cX\times\cY$
such that
\begin{equation}\label{eqn:star2}\bbP((x_*,y_*) \in \cD_N\cup \cD^{(1)}\cup \dots \cup \cD^{(\hr)} )  \leq  \epsilon,\end{equation}
where $\cD_N$ denotes an i.i.d.\ sample from $P$, and $\cD^{(1)},\dots,\cD^{(\hr)}$ denote
the data sets constructed when running $\hT(\cA,\cD_N)$.

\subsubsection{Step 2: constructing a new algorithm and a new distribution}
We begin by explaining the intuition behind the construction of $P'$ and $\cA'$,
which is motivated by the procedure of the black-box test. As in the proof of Theorem \ref{thm:limits_evaluate_unbounded}, we want $\cA'$ to behave in the same way as $\cA$ on a typical dataset drawn from $P$, so that $\hT$ cannot easily distinguish between $\cA$ and $\cA'$---but on the other hand, we want $R_{P,n}(\cA') \geq \tau$
so that $\hT$ is constrained to have a low probability of returning a $1$ when run on $\cA'$ (and consequently, also when run on the original algorithm of interest $\cA$).

With this in mind, writing $P_*$ as the distribution of $(X,Y)\sim P$ conditional on the event $(X,Y) \neq (x_*,y_*)$, we 
can express $P$ as a mixture
\[P = (1-p_*)\cdot P_* + p_*\cdot \delta_{(x_*,y_*)},\]
where $p_* = \bbP_P((X,Y) = (x_*,y_*))$. Note that $p_*\leq \bbP((x_*,y_*)\in\cD_N)\leq \epsilon$ by~\eqref{eqn:star2} (since $N\geq 1$), and $\epsilon\leq c$ by assumption.
Now  define a new distribution,
\[P' =\frac{1-c}{1-p_*}\cdot P + \frac{c-p_*}{1-p_*}\cdot \delta_{(x_*,y_*)}  = (1-c)\cdot P_* + c\cdot \delta_{(x_*,y_*)},\]
and a new algorithm $\cA'$, defined as
\[\cA'(\cD;\xi) = \begin{cases} \cA(\cD;\xi), & \textnormal{ if }(x_*,y_*)\not\in \cD,\\
f_*, & \textnormal{ if }(x_*,y_*)\in \cD.\end{cases}\]
Exactly as in the proof of Theorem~\ref{thm:limits_evaluate_unbounded}, we can calculate
\[\bbP(\hT(\cA,\cD_N)=1)
\leq \left(\frac{1-c}{1-p_*}\right)^{-N}\bbP(\hT(\cA',\cD_N')=1) + \epsilon \leq (1-c)^{-N}\bbP(\hT(\cA',\cD_N')=1) + \epsilon,\]
where $\cD_N$, $\cD_N'$ are data sets sampled i.i.d.\ from $P$ and from $P'$, respectively.

We will show below that the new algorithm $\cA'$ and distribution $P'$ satisfy
$R_{P',n}(\cA')\geq \tau$ and $\beta_q(\cA',P',n) \leq \gamma_q$, as long as we choose $\epsilon$ to be sufficiently small.
This means that we must have
$\bbP(\hT(\cA',\cD_N')=1) \leq \alpha$, by the assumption~\eqref{eqn:validity_stable} on the validity of $\hT$ for testing
stable algorithms, and so we have
\[\bbP(\hT(\cA,\cD_N)=1)
\leq (1-c)^{-N}\alpha + \epsilon.\]
Since $\epsilon>0$ can be chosen to be arbitrarily small, then, this verifies the bound~\eqref{eqn:prove_with_c},
 which completes the proof of the theorem once we have verified the risk and stability of $(\cA',P',n)$.

\subsubsection{Step 3: verifying the risk}
In this step we verify that $R_{P',n}(\cA')\geq \tau$.
Write $\hf_n' = \cA'(\cD_n';\xi)$ where $\cD_n' =\{(X_i',Y_i')\}$ is sampled i.i.d.\ from $P'$, and let $(X_{n+1}',Y_{n+1}')\sim P'$
be an independently drawn test point. We calculate
\begin{align*}
R_{P',n}(\cA')
&=\bbE\left[\ell(\hf_n'(X_{n+1}'),Y_{n+1}')\right]   \geq \bbE\left[\ell(\hf_n'(X_{n+1}'),Y_{n+1}')\cdot \indi_{(X_{n+1}',Y_{n+1}')\neq (x_*,y_*)}\right] \\
&\geq\bbE\left[\ell(\hf_n'(X_{n+1}'),Y_{n+1}')\cdot \indi_{(x_*,y_*)\not\in \cD_n',  (X_{n+1}',Y_{n+1}')\neq (x_*,y_*)}\right]
\\&\hspace{1in}+ \bbE\left[\ell(\hf_n'(X_{n+1}'),Y_{n+1}')\cdot  \indi_{(x_*,y_*)\in \cD_n',  (X_{n+1}',Y_{n+1}')\neq (x_*,y_*)}\right]\\
&=\bbE\left[\ell(\hf_n(X_{n+1}'),Y_{n+1}')\cdot \indi_{(x_*,y_*)\not\in \cD_n',  (X_{n+1}',Y_{n+1}')\neq (x_*,y_*)}\right]
\\&\hspace{1in}+ \bbE\left[\ell(f_*(X_{n+1}'),Y_{n+1}')\cdot  \indi_{(x_*,y_*)\in \cD_n',  (X_{n+1}',Y_{n+1}')\neq (x_*,y_*)}\right],\end{align*}
where the last step holds by definition of the modified algorithm $\cA'$. 
Since the distribution $P'$ places mass $c$ on the point $(x_*,y_*)$, we can rewrite this as
\begin{align*}
R_{P',n}(\cA')
&\geq (1-c)^{n+1} \bbE\left[\ell(\hf_n(X_{n+1}'),Y_{n+1}')\mid  (x_*,y_*)\not\in \cD_n',  (X_{n+1}',Y_{n+1}')\neq (x_*,y_*)\right]\\
&\hspace{1in} + (1-c)\left[1-(1-c)^n\right] \cdot \bbE\left[\ell(f_*(X_{n+1}'),Y_{n+1}')\mid   (X_{n+1}',Y_{n+1}')\neq (x_*,y_*)\right].\end{align*}
And, since $P=(1-c)P_* + c\delta_{(x_*,y_*)}$, a data point $(X,Y)\sim P'$ has distribution $P_*$ if we condition on the event $(X,Y)\neq (x_*,y_*)$.
This means that we can write
\[\bbE\left[\ell(\hf_n(X_{n+1}'),Y_{n+1}')\mid  (x_*,y_*)\not\in \cD_n',  (X_{n+1}',Y_{n+1}')\neq (x_*,y_*)\right] = R_{P_*,n}(\cA),\]
the risk of the original algorithm $\cA$ for data drawn from $P_*$, while
\[\bbE\left[\ell(f_*(X_{n+1}'),Y_{n+1}')\mid   (X_{n+1}',Y_{n+1}')\neq (x_*,y_*)\right] = R_{P_*}(f_*),\]
the risk of the function $f_*$ for a test point drawn from $P_*$. 
Therefore,
\[R_{P',n}(\cA') \geq 
(1-c)^{n+1}\cdot R_{P_*,n}(\cA) +  (1-c)\left[1-(1-c)^n\right] \cdot R_{P_*}(f_*).\]

Now we bound these remaining risk terms.
First, since $P = (1-p_*)P_* + p_*\delta_{(x_*,y_*)}$, we
 can relate $R_{P_*,n}(\cA)$ to $R_{P,n}(\cA)$ as follows: 
 writing $\cD_n=\{(X_i,Y_i)\}_{i\in[n]}$ and $(X_{n+1},Y_{n+1})$ to denote training
and test data points sampled i.i.d.\ from $P$, and $\hf_n = \cA(\cD_n;\xi)$ as the fitted model, we have
\begin{multline*}
R_{P,n}(\cA)
 = \bbE\left[\ell(\hf_n(X_{n+1}),Y_{n+1})\right]\\
  \leq  (1-p_*)^{n+1}\bbE\left[\ell(\hf_n(X_{n+1}),Y_{n+1})\mid (x_*,y_*)\not\in\cD_n\cup\{(X_{n+1},Y_{n+1}\}\right]
  + \left[ 1 - (1-p_*)^{n+1}\right] B\\
=  (1-p_*)^{n+1} \cdot R_{P_*,n}(\cA)
+ \left[ 1 - (1-p_*)^{n+1}\right] \cdot B
\leq  R_{P_*,n}(\cA)
+ \epsilon(n+1) B ,\end{multline*}
since $p_*\leq \epsilon$ and so $ \left[ 1 - (1-p_*)^{n+1}\right]  \leq  \left[ 1 - (1-\epsilon)^{n+1}\right]  \leq (n+1)\epsilon$. Rearranging terms, 
\[R_{P_*,n}(\cA) \geq R_{P,n}(\cA) -  \epsilon (n+1) B.\] Similarly, we have
\[R_P(f_*) = (1-p_*) R_{P_*}(f_*)  +p_* \ell(f_*(x_*),y_*) \leq (1-p_*) R_{P_*}(f_*) + p_* B \leq R_{P_*}(f_*) + \epsilon B,\]
and we also know that $R_P(f_*)\geq R_P^{\max}-\epsilon$ by~\eqref{eqn:star1}. So,
\[R_{P_*}(f_*) \geq R_P^{\max}-\epsilon(1+B).\]

Combining everything, then,
\begin{multline*}R_{P',n}(\cA') \geq 
(1-c)^{n+1}\cdot \left[R_{P,n}(\cA) -  \epsilon (n+1) B\right] +  (1-c)\left[1-(1-c)^n\right] \cdot \left[ R_P^{\max} - \epsilon(1+B)\right]\\
\geq (1-c) \Big[(1-c)^n \cdot R_{P,n}(\cA)
+ \left[1-(1-c)^n\right]\cdot  R_P^{\max}\Big] - \epsilon\left( 1 + (n+1)B \right) \\> \tau - \epsilon\left( 1 + (n+1)B \right) ,\end{multline*}
by~\eqref{eqn:assume_c_2}. We therefore have
$R_{P',n}(\cA')\geq \tau$
for sufficiently small $\epsilon>0$, as desired.

\subsubsection{Step 4: verifying the stability}
In this step we verify that $\beta_q(\cA',P',n) \leq \gamma_q$.
Let $(X_1,Y_1),\dots,(X_{n+1},Y_{n+1})\iidsim P$, and let $$(X_1',Y_1'),\dots,(X_{n+1}',Y_{n+1}')\iidsim P'.$$ 
Define $\cD_n = \{(X_i,Y_i)\}_{i\in[n]}$, let $\cD_{n+1}= \{(X_i,Y_i)\}_{i\in[n+1]}$, and let 
 $\cD_n^{(-j)} = \{(X_i,Y_i)\}_{i\in[n]\backslash\{j\}}$. Define $\cD'_n$, $\cD'_{n+1}$, and $\cD'_n{}^{(-j)}$ analogously.
 Let $\hf_n = \cA(\cD_n;\xi)$ and $\hf_n^{(-j)} = \cA(\cD_n^{(-j)};\xi)$, and $\hf_n' = \cA'(\cD_n';\xi)$ and $\hf_n'{}^{(-j)} = \cA'(\cD'_n{}^{(-j)};\xi)$.

First, we work with the stability of $(\cA,P,n)$. We have
\begin{align*}
\gamma^q_q \geq  \beta^q_q(\cA,P,n)
&=   \bbE\left[ \left| \ell(\hf_n(X_{n+1}),Y_{n+1}) - \ell(\hf_n^{(-j)}(X_{n+1}),Y_{n+1})\right|^q \right] \\
&\geq   \bbE\left[\indi_{(x_*,y_*)\not\in\cD_{n+1}}  \left| \ell(\hf_n(X_{n+1}),Y_{n+1}) - \ell(\hf_n^{(-j)}(X_{n+1}),Y_{n+1})\right|^q \right] \\
&= (1-p_*)^{n+1} \cdot   \bbE_{P_*}\left[  \left| \ell(\hf_n(X_{n+1}),Y_{n+1}) - \ell(\hf_n^{(-j)}(X_{n+1}),Y_{n+1})\right|^q\right]\\
&\geq (1-\epsilon)^{n+1} \cdot   \bbE_{P_*}\left[  \left| \ell(\hf_n(X_{n+1}),Y_{n+1}) - \ell(\hf_n^{(-j)}(X_{n+1}),Y_{n+1})\right|^q\right],
\end{align*}
since conditional on   $(x_*,y_*)\not\in\cD_{n+1}$, the data points $(X_i,Y_i)$ are i.i.d.\ draws from $P_*$, and since $p_*\leq \epsilon$.
Similarly, we can calculate
\begin{align*}
 \beta^q_q(\cA',P',n)
&=   \bbE\left[ \left| \ell(\hf'_n(X'_{n+1}),Y'_{n+1}) - \ell(\hf'_n{}^{(-j)}(X'_{n+1}),Y'_{n+1})\right|^q \right] \\
&=   \bbE\left[\indi_{(x_*,y_*)\not\in\cD_{n+1}'} \left| \ell(\hf'_n(X'_{n+1}),Y'_{n+1}) - \ell(\hf'_n{}^{(-j)}(X'_{n+1}),Y'_{n+1})\right|^q \right] 
\\&\hspace{.2in}+   \bbE\left[\indi_{(x_*,y_*)\in\cD_{n+1}'} \left| \ell(\hf'_n(X'_{n+1}),Y'_{n+1}) - \ell(\hf'_n{}^{(-j)}(X'_{n+1}),Y'_{n+1})\right|^q \right] \\
&=   (1-c)^{n+1}\bbE\left[ \left| \ell(\hf'_n(X'_{n+1}),Y'_{n+1}) - \ell(\hf'_n{}^{(-j)}(X'_{n+1}),Y'_{n+1})\right|^q \ \big\vert \ (x_*,y_*)\not\in\cD_{n+1}'\right] 
\\&\hspace{.2in}+\left[1 - (1-c)^{n+1}\right]   \bbE\left[\left| \ell(\hf'_n(X'_{n+1}),Y'_{n+1}) - \ell(\hf'_n{}^{(-j)}(X'_{n+1}),Y'_{n+1})\right|^q \ \big \vert \ (x_*,y_*)\in\cD_{n+1}' \right] .
\end{align*}
Next, on the event 
$(x_*,y_*)\not\in\cD_{n+1}'$, we have $\hf'_n = \hf_n$ and $\hf'_n{}^{(-j)}=\hf_n^{(-j)}$, and the data points $(X_i',Y_i')$ are i.i.d.\ draws from $P_*$.
So, 
 \begin{multline*}\bbE\left[ \left| \ell(\hf'_n(X'_{n+1}),Y'_{n+1}) - \ell(\hf'_n{}^{(-j)}(X'_{n+1}),Y'_{n+1})\right|^q \ \big\vert \ (x_*,y_*)\not\in\cD_{n+1}'\right]  \\= 
   \bbE_{P_*}\left[  \left| \ell(\hf_n(X_{n+1}),Y_{n+1}) - \ell(\hf_n^{(-j)}(X_{n+1}),Y_{n+1})\right|^q\right]
  \leq  (1-\epsilon)^{-(n+1)} \cdot\gamma^q_q.\end{multline*}
And, if  $(x_*,y_*) \in \cD'_n{}^{(-j)}$, then $\hf'_n = \hf'_n{}^{(-j)}$, so 
\begin{multline*}\bbE\left[\left| \ell(\hf'_n(X'_{n+1}),Y'_{n+1}) - \ell(\hf'_n{}^{(-j)}(X'_{n+1}),Y'_{n+1})\right|^q \ \big \vert \ (x_*,y_*)\in\cD_{n+1}' \right]\\
\leq B^q\cdot  \bbP( (x_*,y_*) \not \in \cD'_n{}^{(-j)}\mid  (x_*,y_*)\in\cD_{n+1}' )
\leq B^q\cdot  \frac{2}{n+1},\end{multline*}
where the last step holds since data points $\{(X_i',Y_i')\}_{i\in[n+1]}$ are exchangeable, so if $(x_*,y_*)$ appears at least once in a data set of size $n+1$,
with probability at least $\frac{n-1}{n+1}$ it appears in the subset given by the $n-1$ many indices $[n]\backslash \{j\}$.
Combining these calculations,
\[
 \beta^q_q(\cA',P',n)
 \leq (1-c)^{n+1} \cdot (1-\epsilon)^{-(n+1)} \cdot\gamma^q_q+ \left[1 - (1-c)^{n+1}\right]  \cdot B^q \cdot \frac{2}{n+1} .\]
By assumption, $\gamma^q_q\geq B^q \cdot \frac{2}{n} $, so 
we have
\[
 \beta^q_q(\cA',P',n)
 \leq \gamma^q_q \cdot \left[ (1-c)^{n+1} \cdot (1-\epsilon)^{-(n+1)} + \left[1 - (1-c)^{n+1}\right]  \cdot \frac{n}{n+1}\right].\]
Since $c>0$,  we therefore have $ \beta^q_q(\cA',P',n)\leq \gamma^q_q$ for sufficiently small $\epsilon>0$, as desired.

\section{Proofs: hardness results for algorithm comparison}
\subsection{Proof of Theorem~\ref{thm:limits_compare}}
Theorem~\ref{thm:limits_compare} can be viewed as a special case of 
Theorem~\ref{thm:limits_compare_stability_alt}. 
Specifically, since the comparison function takes values in $[-B,B]$, 
it holds trivially for any $(\cA_0, \cA_1,P,n)$ that $\beta_q(\cA_0, \cA_1,P,n) \leq 2B$.
Thus, the result of Theorem~\ref{thm:limits_evaluate} is implied by
applying Theorem~\ref{thm:limits_compare_stability_alt} with stability parameter $\gamma_q = 2B$.

\subsection{Proof of Theorem \ref{thm:limits_compare_stability} }
The proof of this Theorem is very similar to the proof of Theorem~\ref{thm:limits_evaluate_stability}, with a few modifications
to accommodate the algorithm comparison setting.
We begin by fixing any constant $c$ with
\begin{equation}\label{eqn:assume_c_comparison}
(1-c) < \left( \frac{\Delta_P^{\max} - \frac{B(\alpha^{-1} - 1)}{N} }{\Delta_P^{\max} + \Delta_{P,n}(\cA_0, \cA_1)}   \right)^{1/n}
\end{equation}
We will show that
\begin{equation}\label{eqn:prove_with_c_comparison}\bbP(\hT(\cA_0,\cA_1,\cD_N)=1)
\leq (1-c)^{-N} \alpha.\end{equation}
for any such $c$, which implies that
\[\bbP(\hT(\cA_0,\cA_1,\cD_N) = 1) \leq \left[\alpha\left(1 + \frac{ \Delta_{P,n}(\cA_0, \cA_1) + \frac{B(\alpha^{-1} - 1)}{N} }{\Delta_P^{\max} - \frac{B(\alpha^{-1} - 1)}{N} } \right)^{N/n}\right]\wedge 1 ,\]
as claimed.

\subsubsection{Step 1: some preliminaries}
As in the proof of Theorem~\ref{thm:limits_evaluate_stability}, assume $1-c\geq (1/\alpha)^{-1/N}$ to avoid the trivial case.
By~\eqref{eqn:assume_c_comparison}, then
\[(1-c)^n \cdot (-\Delta_{P,n}(\cA_0,\cA_1)) + [1 - (1-c)^n] \cdot  \Delta_P^{\max}\\
 >   \frac{B(\alpha^{-1} - 1)}{N} 
\geq B \left[(1/\alpha)^{1/N}-1\right] \geq B \cdot\frac{c}{1-c}\]
and therefore,
\begin{equation}\label{eqn:assume_c_compare_2}(1-c) \left[(1-c)^n \cdot \Delta_{P,n}(\cA_0, \cA_1)
+ \left[1-(1-c)^n\right]\cdot ( - \Delta_P^{\max})\right] + cB
<  0.\end{equation}
Let $\epsilon\in (0,c)$ denote an arbitrarily small constant.
By definition of $\Delta_P^{\max}$, we can find functions $f_0,f_1:\cX\rightarrow\hcY$ such that
\begin{equation}\label{eqn:star1-2}\Delta_P(f_0,f_1)\geq \Delta_P^{\max} - \epsilon.\end{equation}
And, by Lemma~\ref{lem:find_rare_point}, we can find some $(x_*,y_*)\in\cX\times\cY$
such that
\begin{equation}\label{eqn:star2-2}\bbP((x_*,y_*) \in \cD_N\cup \cD^{(1)}\cup \dots \cup \cD^{(\hr)} )  \leq  \epsilon,\end{equation}
where $\cD_N$ denotes an i.i.d.\ sample from $P$, and $\cD^{(1)},\dots,\cD^{(\hr)}$ denote
the data sets constructed when running $\hT(\cA_0,\cA_1,\cD_N)$.

\subsubsection{Step 2: constructing new algorithms and a new distribution}
Let $P_*$, $p_*$, and $P'$ be defined exactly as in the proof of Theorem~\ref{thm:limits_evaluate_stability}.
Now we define the new algorithms $\cA_0',\cA_1'$ as follows:
\[
		\cA_0'(\cD; \xi) = \begin{cases}
			f_1, & \text{ if } (x^*, y^*) \in \cD, \\
			\cA_0(\cD; \xi), & \text{ if }  (x^*, y^*) \notin \cD.
		\end{cases}\quad\quad 
				\cA_1'(\cD; \xi) = \begin{cases}
			f_0, & \text{ if } (x^*, y^*) \in \cD, \\
			\cA_1(\cD; \xi), & \text{ if }  (x^*, y^*) \notin \cD.
		\end{cases}
\]
Exactly as in the proofs of Theorem~\ref{thm:limits_evaluate_unbounded} and Theorem~\ref{thm:limits_evaluate_stability}, we can calculate
\[\bbP(\hT(\cA_0,\cA_1,\cD_N)=1)
\leq (1-c)^{-N} \bbP(\hT(\cA_0',\cA_1',\cD_N')=1) + \epsilon,\]
where $\cD_N$, $\cD_N'$ are data sets sampled i.i.d.\ from $P$ and from $P'$, respectively.

We will show below that the new algorithms $\cA_0',\cA_1$ and distribution $P'$ satisfy
$\Delta_{P',n}(\cA_0',\cA_1')\leq 0$ and $\beta_q(\cA_l',P',n) \leq \gamma_q$ for $l=0,1$, as long as we choose $\epsilon$ to be sufficiently small.
This means that we must have
$\bbP(\hT(\cA_0',\cA_1',\cD_N')=1) \leq \alpha$, by the assumption~\eqref{eqn:validity_stability-comp} on the validity of $\hT$ for comparing
stable algorithms, which completes the proof exactly as for Theorem~\ref{thm:limits_evaluate_stability}.

\subsubsection{Step 3: verifying the risk}
In this step we verify that $\Delta_{P',n}(\cA_0',\cA_1')\leq 0$.
Write $\hf_{l,n}' = \cA_l'(\cD_n';\xi)$ for each $l=0,1$, where $\cD_n' =\{(X_i',Y_i')\}$ is sampled i.i.d.\ from $P'$, and let $(X_{n+1}',Y_{n+1}')\sim P'$
be an independently drawn test point. Recalling that
$\Delta_{P',n}(\cA_0',\cA_1') = R_{P',n}(\cA_0') - R_{P',n}(\cA_1')$
by assumption on $\psi$, we now bound each of these risks separately. 
For each $l=0,1$, following identical calculations as in the corresponding step for the proof of Theorem~\ref{thm:limits_evaluate_stability},
we have
\begin{align*}
&\bbE\left[\ell(\hf_{l,n}'(X_{n+1}'),Y_{n+1}')\cdot \indi_{(X_{n+1}',Y_{n+1}')\neq (x_*,y_*)}\right] \\
&\hspace{.5in}=
(1-c)^{n+1} \bbE\left[\ell(\hf_{l,n}(X_{n+1}'),Y_{n+1}')\mid  (x_*,y_*)\not\in \cD_n',  (X_{n+1}',Y_{n+1}')\neq (x_*,y_*)\right]\\
&\hspace{1in} + (1-c)\left[1-(1-c)^n\right] \cdot \bbE\left[\ell(f_{1-l}(X_{n+1}'),Y_{n+1}')\mid   (X_{n+1}',Y_{n+1}')\neq (x_*,y_*)\right]\\
&\hspace{.5in}=(1-c)^{n+1} R_{P_*,n}(\cA_l) +   (1-c)\left[1-(1-c)^n\right] \cdot R_{P_*}(f_{1-l}).\end{align*}
Since the loss takes values in $[0,B]$, therefore,
\[R_{P,n}(\cA_0') \leq 
(1-c)^{n+1} R_{P_*,n}(\cA_0) +   (1-c)\left[1-(1-c)^n\right]  R_{P_*}(f_1)
+ cB,\]
and 
\[R_{P,n}(\cA_1') \geq 
(1-c)^{n+1} R_{P_*,n}(\cA_1) +   (1-c)\left[1-(1-c)^n\right]  R_{P_*}(f_0).\]
Combining these calculations, then,
\begin{multline*}\Delta_{P',n}(\cA_0',\cA_1')
\\\leq (1-c)^{n+1} \left(R_{P_*,n}(\cA_0) - R_{P_*,n}(\cA_1) \right) +  (1-c)\left[1-(1-c)^n\right]  \left(R_{P_*}(f_1) - R_{P_*}(f_0)\right)
+ cB\\
=  (1-c)^{n+1} \Delta_{P_*,n}(\cA_0,\cA_1) +    (1-c)\left[1-(1-c)^n\right]\Delta_{P_*}(f_1,f_0)
+ cB.\end{multline*}
Following similar calculations as in the proof of Theorem~\ref{thm:limits_evaluate_stability},
we can also calculate
\[\Delta_{P_*,n}(\cA_0,\cA_1) \geq \Delta_{P,n}(\cA_0,\cA_1) - 2B(n+1)\epsilon,\]
and (by antisymmetry of $\psi$)
\[\Delta_{P_*}(f_1,f_0) \leq \Delta_P(f_1,f_0) + 2B\epsilon = - \Delta_P(f_0,f_1) + 2B\epsilon \leq  - \Delta_P^{\max} +\epsilon(1+2B).\]
Therefore,
\[\Delta_{P',n}(\cA_0',\cA_1') \leq (1-c)^{n+1} \Delta_{P,n}(\cA_0,\cA_1) +  (1-c)\left[1-(1-c)^n\right] (-\Delta_P^{\max})
+ cB + (1+2B(n+1))\epsilon.\]
Recalling~\eqref{eqn:assume_c_compare_2}, we therefore see that, for sufficiently small $\epsilon>0$, we have $\Delta_{P',n}(\cA_0',\cA_1')\leq 0$.

\subsubsection{Step 4: verifying the stability}
To complete the proof, we need to verify that $\beta_q(\cA_l',P',n) \leq \gamma_q$ for each $l=0,1$.
In fact, this calculation is identical to the corresponding step in the proof of Theorem~\ref{thm:limits_evaluate_stability}, so we omit the details.

\subsection{Proof of Theorem~\ref{thm:limits_compare_stability_alt}} \label{sec:compare-proof-stability}
Following the definitions established in Section \ref{sec:algorithm-comparison-as-evaluation}, we can see that
\[
	\tilde{\ell}((\hy_0, \hy_1), y) \in [0, 2B], \quad  \tilde{R}_{P,n}(\tilde{\cA}) = B - \Delta_{P,n}(\cA_0, \cA_1)\]
	and
	\[\tilde{R}_P^{\max} := \sup_{(f_0,f_1)} \tilde{R}_P((f_0,f_1)) = \sup_{f_0,f_1}( B - \Delta_P(f_0,f_1)) = \sup_{f_0,f_1}( B + \Delta_P(f_1,f_0)) = B + \Delta_P^{\max}.\]
Now we apply Theorem~\ref{thm:limits_evaluate_stability}, with $2B$ in place of $B$, and with $B$ in place of $\tau$ (since $\tilde{R}_P(\tilde{\cA}) \geq B$ if and only if $\Delta_P(\cA_0,\cA_1) \leq 0$.) We then have
\[\tilde\tau = B\left(1+\frac{\alpha^{-1}-1}{N}\right).\] Plugging in these substitutions, the result of Theorem~\ref{thm:limits_evaluate_stability} tells us that power for evaluating the risk $\tilde{R}_{P,n}(\tilde\cA)$  is bounded as 
\[
	\bbP_P( \hT(\tilde\cA, \cD_N)  = 1) \leq \left[\alpha\left(1 + \frac{ B(1+\frac{\alpha^{-1}-1}{N}) - \tilde{R}_{P,n}(\tilde\cA) }{\tilde{R}_{P}^{\max} - B(1+\frac{\alpha^{-1}-1}{N}) } \right)^{N/n} \right] \wedge 1,
\]
Plugging in all the substitutions calculated above, we then have
\[
	\bbP_P( \hT(\cA_0,\cA_1, \cD_N)  = 1) \leq \left[\alpha\left(1 + \frac{ B(1+\frac{\alpha^{-1}-1}{N}) - (B - \Delta_{P,n}(\cA_0, \cA_1)) }{( B + \Delta_P^{\max}) - B(1+\frac{\alpha^{-1}-1}{N}) } \right)^{N/n} \right] \wedge 1,
\]
which simplifies to the result claimed in Theorem~\ref{thm:limits_compare_stability_alt} and thus completes the proof.

However, in order to formally verify that Theorem~\ref{thm:limits_evaluate_stability} can be applied to this setting, we need to verify that for the comparison problem, the notion of stability,
and the notion of a black-box test, coincide with the definitions for the algorithm evaluation setting. First, we consider stability.
For any paired algorithm $\tilde\cA = (\cA_0,\cA_1)$, let $\tilde\beta_q(\tilde\cA,P,n)$ denote the $\ell_q$-stability of the algorithm $\tilde\cA$ with respect to the loss $\tilde\ell$ (i.e., as in Definition~\ref{def:ell_q_stability}, but 
with $\tilde\ell$ in place of $\ell$). We can then observe that, by definition,
\[\tilde\beta_q(\tilde\cA,P,n) = \beta_q(\cA_0,\cA_1,P,n),\]
where $\beta_q(\cA_0,\cA_1,P,n)$ is the $\ell_q$-stability of the pair $(\cA_0,\cA_1)$ with respect to the comparison function $\psi$ (as in~\eqref{eqn:define_stability_psi}).
Finally, for defining a black-box test, we observe that Definitions~\ref{def:black-box-test} and~\ref{def:black_box_test_comp} are identical except we replace one algorithm in Definition~\ref{def:black-box-test} with two algorithms in Definition~\ref{def:black_box_test_comp}. This finishes the proof. 

\section{Additional proofs}

\subsection{Proof of Proposition~\ref{thm:limits_evaluate-new-validity}}

	Let us first introduce another testing problem so that we can connect the testing problem \eqref{eq:tail-risk-test} with the original testing problem in \eqref{eqn:hypothesis_test}:
	\begin{equation*}
		\tilde{H}_0: R_{P,n}(\cA) \geq \delta B + \epsilon \quad \textnormal{versus} \quad \tilde{H}_1: R_{P,n}(\cA) < \delta B + \epsilon. 
	\end{equation*}
	Then we observe that for any $\cA, P$ such that $R_{P,n}(\cA) \geq \delta B + \epsilon$, we also have $\bbP( R_P(\cA(\cD_n; \xi)) \geq \epsilon ) \geq \delta $. This is because if $\bbP( R_P(\cA(\cD_n; \xi)) \geq \epsilon ) < \delta$, then 
	\begin{equation} \label{ineq:risk-transfer}
		\begin{split}
			R_{P,n}(\cA) &= \bbE[ R_P(\cA(\cD_n; \xi)) \indi\{ R_P(\cA(\cD_n; \xi)) \geq \epsilon \} ] + \bbE[ R_P(\cA(\cD_n; \xi)) \indi\{ R_P(\cA(\cD_n; \xi)) < \epsilon \} ]  \\
			& \leq B \bbP(R_P(\cA(\cD_n; \xi)) \geq \epsilon) +  \epsilon \bbP(R_P(\cA(\cD_n; \xi)) < \epsilon) \\
			& < B \delta + \epsilon,
		\end{split}
	\end{equation} which contradicts with $R_{P,n}(\cA) \geq \delta B + \epsilon$. This implies that for any $\hT$ satisfying \eqref{ineq:new-validity}, it also satisfies 
	\begin{equation} \label{ineq:intermediate-validity}
	\bbP_P( \hT(\cA,\cD_N) = 1)\leq \alpha \textnormal{ for any $\cA,P$ such that $R_{P,n}(\cA) \geq \delta B + \epsilon$}.
\end{equation}
	Thus, by Theorem \ref{thm:limits_evaluate}, for any $\cA$ and any $P$ with $R_{P,n}(\cA)< \delta B + \epsilon$, the $\hT$ should satisfy
	\begin{equation} \label{ineq:power-upper-new}
		\bbP_P( \hT(\cA, \cD_N)  = 1) \leq \left[\alpha\left(1 + \frac{ \tilde\tau - R_{P,n}(\cA) }{R_{P}^{\max} - \tilde\tau } \right)^{N/n} \right] \wedge 1.
	\end{equation}
Finally, for any $\cA$ and any $P$ with $\bbP( R_P(\cA(\cD_n; \xi)) \geq \epsilon ) < \delta$, by \eqref{ineq:risk-transfer}, we have for the same $\cA$ and $P$, it satisfies that $R_{P,n}(\cA) < B \delta + \epsilon$, then the power upper bound in \eqref{ineq:power-upper-new} also holds. This finishes the proof of this proposition.

\subsection{Proof of Theorem~\ref{thm:binomial_power}}
First, we have $S \sim \text{Binomial}( \lfloor N/(n+1)\rfloor, R_{P,n}(\cA))$ by construction. First, we verify the validity of this test. Fix any $\cA$, $P$ with $R_{P,n}(\cA) \geq \tau$. In this case, 
	\begin{equation*}
	\begin{split}
		&\bbP(\hT(\cA, \cD_N) = 1) \\
		&= \bbP(S  < k_* )  + a_* \bbP( S = k_* )\\
		&= (1-a_*)\bbP(S  < k_* )  + a_* \bbP( S \leq  k_* )\\
		&= (1 - a_*) \bbP( \textnormal{Binomial}(\textstyle{\lfloor \frac{N}{n+1}\rfloor}, R_{P,n}(\cA)  )  < k_* ) + a_* \bbP( \textnormal{Binomial}(\textstyle{\lfloor \frac{N}{n+1}\rfloor}, R_{P,n}(\cA)  )  \leq k_* ) \\
		& \leq  (1 - a_*) \bbP( \textnormal{Binomial}(\textstyle{\lfloor \frac{N}{n+1}\rfloor}, \tau  )  < k_* ) + a_* \bbP( \textnormal{Binomial}(\textstyle{\lfloor \frac{N}{n+1}\rfloor}, \tau  )  \leq k_* ) \\
		& =   \bbP( \textnormal{Binomial}(\textstyle{\lfloor \frac{N}{n+1}\rfloor}, \tau  )  < k_* ) + a_* \bbP( \textnormal{Binomial}(\textstyle{\lfloor \frac{N}{n+1}\rfloor}, \tau  )  = k_* )\\
		& = \alpha
	\end{split}
	\end{equation*} where the last step holds by our choice of $k_*,a_*$, while the inequality holds because $R_{P,n}(\cA) \geq \tau$ and so $\textnormal{Binomial}(\textstyle{\lfloor \frac{N}{n+1}\rfloor}, R_{P,n}(\cA)  )$ stochastically dominates $\textnormal{Binomial}(\textstyle{\lfloor \frac{N}{n+1}\rfloor}, \tau  )$.
	
	Next, we compute the power when $\alpha < (1-\tau)^{\lfloor N/(n+1)\rfloor }$. By this bound on $\alpha$ we have $$\bbP( \textnormal{Binomial}(\textstyle{\lfloor \frac{N}{n+1}\rfloor}, \tau  )  = 0 ) = (1-\tau)^{ \lfloor N/(n+1)\rfloor } > \alpha,$$ which implies that
	\[k_* = 0, \ a_* = \frac{\alpha}{\bbP( \textnormal{Binomial}(\textstyle{\lfloor \frac{N}{n+1}\rfloor}, \tau  )  = 0 )}.\]
Therefore, by the construction of the Binomial test $\hT$, we have
\begin{multline*}
\bbP( \hT(\cA, \cD_N) = 1 ) = a_*\bbP(S=0) =  \frac{\alpha}{\bbP( \textnormal{Binomial}(\textstyle{\lfloor \frac{N}{n+1}\rfloor}, \tau  )  = 0 )}\cdot \bbP\left(\textnormal{Binomial}(\textstyle{\lfloor \frac{N}{n+1}\rfloor}, R_{P,n}(\cA)  ) =0\right)\\
=  \frac{\alpha}{(1 - \tau)^{\lfloor N/(n+1)\rfloor}} (1 - R_{P,n}(\cA)) ^{\lfloor N/(n+1)\rfloor}
 = \alpha \left(1 + \frac{\tau - R_{P,n}(\cA) }{1-\tau} \right)^{ \lfloor N/(n+1)\rfloor } ,
\end{multline*}
as claimed.

\subsection{Proof of Proposition~\ref{prop:consistency_regime}}
Let $(X_1,Y_1),\dots,(X_n,Y_n),(X'_1,Y'_1),\dots,(X'_n,Y'_n)\iidsim P$.
Let
\[\hf_n = \cA(\cD_n), \quad \hf_n'=\cA(\cD_n'),\]
where $\cD_n =\{(X_i,Y_i)\}_{i \in [n]}$, $\cD'_n=\{(X'_i,Y'_i)\}_{i \in [n]}$.
(Recall we have assumed $\cA$ is a deterministic algorithm, so we suppress the dependence on the random seed $\xi$.)

Next, we define some additional notation. For each $j=1,\dots,n-1$, define data set
\[\cD_n^{(j)} = \big((X'_1,Y'_1),\dots,(X'_j,Y'_j),(X_{j+1},Y_{j+1}),\dots,(X_n,Y_n)\big),\]
and let $\cD_n^{(0)}=\cD_n$, $\cD_n^{(n)} = \cD_n'$, and define 
$\tilde\cD_n^{(j)}$ be the same with $j$th point removed for each $j=1,\dots,n$, i.e.,
\[\tilde\cD_n^{(j)} = \big((X'_1,Y'_1),\dots,(X'_{j-1},Y'_{j-1}),(X_{j+1},Y_{j+1}),\dots,(X_n,Y_n)\big).\]
Note that for each $j$, $\cD_n^{(j-1)}$ and $\tilde\cD_n^{(j)}$ differ by one data point (i.e., removing $(X_j,Y_j)$),
and $\cD_n^{(j)}$ and $\tilde\cD_n^{(j)}$ differ by one data point (i.e., removing $(X'_j,Y'_j)$). 

For the case $q=1$, we have
\begin{align*}
&\bbE\left[ \left| R_P\big(\cA(\cD_n^{(j-1)})\big) -R_P\big(\cA(\cD_n^{(j)})\big) \right|\right]\\
&\leq \bbE\left[ \left| R_P\big(\cA(\cD_n^{(j-1)})\big) - R_P\big(\cA(\tilde\cD_n^{(j)})\big) \right|\right] + \bbE\left[ \left| R_P\big(\cA(\cD_n^{(j)})\big) - R_P\big(\cA(\tilde\cD_n^{(j)})\big) \right|\right]\\
&\leq \beta_1(\cA,P,n) + \beta_1(\cA,P,n) = 2\beta_1(\cA,P,n),\end{align*}
where the last inequality holds by definition of $\ell_1$-stability. Since this is true for each $j$, then,
\begin{multline*}\bbE\left[\left| R_P(\hf_n) - R_P(\hf_n')\right|\right]
= \bbE\left[ \left| R_P\big(\cA(\cD_n^{(0)})\big) -R_P\big(\cA(\cD_n^{(n)})\big) \right|\right]\\
\leq \sum_{j=1}^n \bbE\left[ \left| R_P\big(\cA(\cD_n^{(j-1)})\big) -R_P\big(\cA(\cD_n^{(j)})\big) \right|\right]
\leq 2n\beta_1(\cA,P,n).\end{multline*}
By Jensen's inequality, we have
\[\bbE\left[ \left|R_P(\hf_n) - R_{P,n}(\cA)\right|\right] = 
\bbE\left[ \left|R_P(\hf_n) - \bbE\left[R_P(\hf_n')\right]\right|\right] \leq 
\bbE\left[ \left|R_P(\hf_n) - R_P(\hf_n') \right|\right] .\]
This completes the proof for $q=1$.

Next, we turn to the case $q=2$. Note that $\cD_n^{(j-1)},\cD_n^{(j)}$ are in fact i.i.d.\ conditional on $\tilde\cD_n^{(j)}$ (i.e.,
each is obtained by drawing a new sample from $P$ to fill the $j$th position in the data set).
Therefore,
\begin{align*}
&\bbE\left[ \left( R_P\big(\cA(\cD_n^{(j-1)})\big) - R_P\big(\cA(\cD_n^{(j)})\big) \right)^2\right]\\
&= \bbE\left[2\textnormal{Var}\left(R_P\big(\cA(\cD_n^{(j-1)})\big) \mid \tilde\cD_n^{(j)}\right)\right]\\
&\leq  \bbE\left[2\bbE\left[\left(R_P\big(\cA(\cD_n^{(j-1)})\big) - R_P\big(\cA(\tilde\cD_n^{(j)})\big)\right)^2 \mid \tilde\cD_n^{(j)}\right]\right]\\
&=2 \bbE\left[\left(R_P\big(\cA(\cD_n^{(j-1)})\big) - R_P\big(\cA(\tilde\cD_n^{(j)})\big)\right)^2\right]\\
&\leq 2\beta_2^2(\cA,P,n),\end{align*}
where the last inequality holds by definition of $\ell_2$-stability. 
This is true for each $j=1,\dots,n$. Therefore, by the Efron--Stein Inequality \citep{efron1981jackknife,steele1986efron},
since $R_{P,n}(\cA)=\bbE[R_P(\hf_n)]$,
it holds that
\[\bbE\left[\left(R_P(\hf_n) - R_{P,n}(\cA)\right)^2\right]\leq \frac{1}{2}\sum_{j=1}^n \bbE\left[ \left( R_P\big(\cA(\cD_n^{(j-1)})\big) -  R_P\big(\cA(\cD_n^{(j)})\big) \right)^2\right]
= n\beta_2^2(\cA,P,n).\]
This completes the proof for $q=2$.

\end{document}